\newtheorem{lem}{Lemma}[section]
\newtheorem{prop}[lem]{Proposition}
\newtheorem{cor}[lem]{Corollary}
\newtheorem{thm}[lem]{Theorem}
\theoremstyle{definition}
\newtheorem{dfn}[lem]{Definition}
\newtheorem{rem}[lem]{Remark}
\newcommand{\Rad}{\operatorname{rad}\nolimits}
\newcommand{\mo}{\operatorname{mod}\nolimits}
\newcommand{\T}{\operatorname{\mathcal T}\nolimits}
\newcommand{\U}{\operatorname{\mathbb U}\nolimits}
\newcommand{\Cyl}{\operatorname{Cyl}\nolimits}
\newcommand{\Pp}{\mathcal{P}}
\newcommand{\Ip}{\mathcal{I}}
\newcommand{\GG}{\operatorname{\overline{\Gamma}}\nolimits}
\newcommand{\GGG}{\operatorname{\underline{\Gamma}}\nolimits}
\begin{document}
\title[A geometric realization of categories of type $\tilde{A}$]{A geometric realization of tame categories}
\author{Karin Baur}
\address{Institut f\"{u}r Mathematik und Wissenschaftliches Rechnen, 
Universit\"{a}t Graz, NAWI Graz, Heinrichstrasse 36, 
A-8010 Graz, Austria}
\email{baurk@uni-graz.at}

\author{Hermund Andr\' e Torkildsen}
\address{Faculty of Teacher and Interpreter Education,
S\o{}r-Tr\o{}ndelag University College,
7004 Trondheim, Norway
}
\email{hermund.a.torkildsen@hist.no} 

%\date{version feb 23, 2015}

\begin{abstract}
We give a geometric realization of module categories of type $\tilde{A}_n$. 
We work with oriented arcs to define a translation quiver isomorphic to the 
Auslander-Reiten quiver of the module category of type $\tilde{A}_n$. To get 
a description of the module category, we introduce long moves between 
arcs. These allow us to include the infinite radical in the geometric description. 
Finally, our results can also be used to describe the 
corresponding cluster categories by taking unoriented arcs instead. 
\end{abstract}

\maketitle

%%%%%%%%%%%%%%% NEW SECTION %%%%%%%%%%%%%%%%%%%%%
%
%\section*{comments, to do list}
%
%\begin{enumerate}
%\item
%\textcolor{red}{comment about closed loops ok? (page 2)} \textcolor{orange}{Yes, I believe so. It could be the correct way to interpret homogeneous objects.}
%\item
%\textcolor{red}{are you happy with this: 
%On page 5, I have explained which quasi-simple corresponds to which vertex of $Q$}\textcolor{orange}{Yes, good.}
%\end{enumerate}
%

%%%%%%%%%%%%%%% NEW SECTION %%%%%%%%%%%%%%%%%%%%%

\section*{Introduction}

In this article, we want to describe the module categories and cluster categories 
of type $\tilde{A}_n$ combinatorially. In particular, we will provide a 
geometric combinatorial interpretation of their infinite radicals. 

Let $k$ be an algebraically closed field. 
We will first concentrate on $\mo\tilde{A}=\mo kQ_{g,h}$ for $g\ge h>0$ and 
$\Rad^{\infty}(\mo \tilde{A})=\cap_i \Rad^i(\mo \tilde{A})$. 

Recall that the Auslander Reiten quiver AR$(\mo kQ_{g,h})$ describes the 
isomorphism classes of indecomposable modules and 
the irreducible maps between them. It does not describe the elements of the 
infinite radical $\Rad^{\infty}(\mo\tilde{A})$.  
The shape of the AR-quiver of $\mo\tilde{A}$ is well-known, it consists of several 
connected components (cf. \cite{ringel}). 

We will write $\Pp$, $\Ip$, to 
denote the subcategories of $\mo kQ_{g,h}$ 
of the preprojective, preinjective modules of $\mo kQ_{g,h}$, 
$\T_g$, $\T_h$ and $\T^{\lambda}$ for $\lambda \in k\setminus\{0\}$ for 
the subcategories of the 
modules whose vertices belong to the tubes of rank $g$, $h$ and to the homogeneous 
tubes respectively.  

We describe indecomposable modules in $\Pp$, $\Ip$, $\T_g$ or $\T_h$ 
via oriented arcs in the annulus. This geometric model has been used in various 
articles on (cluster) tubes, module/cluster categories of type $\tilde{A}$, 
cf. \cite{bm}, \cite{bbm}, \cite{bz}, \cite{gehrig}, \cite{to}, \cite{w}, \cite{to}. 
Note that 
the geometric 
interpretation of elements in the homogenous tubes can be achieved via 
closed loops in the annulus, with labels. It seems less natural to us and we thus restrict 
our attention to the other components.

However, we will also focus on the infinite radical of the module and the 
cluster categories of type $\tilde{A}$. We will introduce long moves as a new tool, 
and show how we can interpret them as elements of the infinite radical. 

In \cite{br}, the author introduces a quiver $Q_m$, for every $m\ge 1$, 
by truncating the different components 
in AR$(\mo kQ_{g,h})$ at appropriate levels and then inserting additional 
arrows linking the different components. 
He then defines full subcategories 
$\mathcal J_m Q_{g,h}$ of the module category $\mo kQ_{g,h}$ consisting 
of modules in the union of $\Pp_m$, $\Ip_m$, $(\T_g)_{2m(n+1)}$, $(\T_h)_{2m(n+1)}$, 
$\T^{\lambda}_{2m(n+1)}$ ($\lambda\in k\setminus\{0\}$) (see Section~\ref{sec:bruestle}).
He finally proves that the $k$-category $\mathcal C(Q_m)$ defined by 
$Q_m$, with some relations,  is isomorphic to $\mathcal J_m Q_{g,h}$.

Note that for every module $M\in\mo kQ_{g,h}$, there exists 
$m\ge 1$, such that $M$ is isomorphic to a direct sum of objects of $\mathcal J_m Q_{g,h}$ 
and hence the categories $\mathcal J_m Q_{g,h}$ can be used to describe $\mo kQ_{g,h}$.

In Sections \ref{sec:mod-cat} and \ref{sec:bruestle}, 
we will fix the notation, recall the truncated 
quiver $Q_m$ from \cite{br}. 

In Section \ref{sec:arcs-quiver}, we use oriented arcs in the annulus to 
define the translation quiver 
$\Gamma$ which is isomorphic to AR$(\mo kQ_{g,h})$. 

Geometric models for categories have been studied by various authors in 
recent years. 
A geometric model for cluster categories of type $\widetilde{A}_n$ has been described in 
in \cite{bz} (treating marked surfacds) 
and for $m$-cluster categories of type $\tilde{A}_n$ in \cite{to} in independent work. 
It is based on arcs in an annulus with marked points on the boundary. 
Annuli have also been 
used to describe string modules (\cite{w}) and (cluster) tubes (\cite{gehrig}, \cite{bm}). 

Our 
goal is to 
give a geometric model for the infinite radical in terms of moves between such arcs. 
To do this, we define long moves between arcs of the annulus 
in Section~\ref{sec:inf-rad} and define a quiver $\GG$, whose objects are the same as 
the objects of $\Gamma$. The arrows of $\Gamma$ are kept, and we add new arrows 
for the long moves. 
The long moves are the ones corresponding to elements of the infinite radical. 

We then show in Section~\ref{sec:arquivers} that up to the homogenous components, 
we obtain an isomorphism between 
certain subquivers $\GG_m$ of $\GG$ and the truncated quiver 
$Q_m$ from \cite{br}. 
We thus give a full description of ${\mathcal J}_m(Q_{g,h})$ for every $m\ge 1$ via 
the combinatorial geometric model. 
Finally, in Section \ref{sec:cluster-cat} we show how we can use the above results 
to obtain the description of 
appropriate subcategories of the cluster category of type $\tilde{A}$. 
In the appendix, we include the geometric description of 
the relations imposed in Theorem~\ref{thm:hauptsatz}.

%%%%%%%%%%%%%%% NEW SECTION %%%%%%%%%%%%%%%%%%%%%
\section{The module category of type $\tilde{A}$}\label{sec:mod-cat}
%%%%%%%%%%%%%%%%%%%%%%%%%%%%%%%%%%%%

\subsection{The AR-quiver of the module category} \label{ssec:AR-quiver-mod}

As quiver of type $\tilde{A}_n$ we choose a quiver $Q_{g,h}$ on $n+1=g+h$ vertices, 
for $g\ge h>0$. 
Let $0,1,2,\dots, g-1, g, g+1, \dots, n$ be arranged clockwise on a circle. 
Then $Q_{g,h}$ has $g$ successive clockwise arrows $\beta_i:i\to i+1$ for 
$i=0\dots, g-1$ 
and $h$ successive anti-clockwise arrows, $\alpha_i: i+1\to i$ for $i=g,g+1,\dots, n$ 
(with $\alpha_n=0\to n$) 
with a source at $0$ and a sink at $g$. 

$$
\xymatrix@=2mm@R=4mm{ 
 & 1 \ar[rr]^{\beta_1} & & 2 \ar[r]^{\beta_2} & & \cdots && &\ar[r]& g-1\ar[rd]^{\beta_{g-1}} \\
 0\ar[ru]^{\beta_0}\ar[rd]_{\alpha_n} & & && && && &&  g\\ 
& n \ar[rrr]^{\alpha_{n-1}\ } &&& n-1\ar[rr] && \dots \ar[rr] && g+1\ar[rru]_{\alpha_g}
}
$$ 

Let $k$ be an algebraically closed field. We consider the module category $\mo\tilde{A}_n$ 
of left $kQ_{g,h}$-modules. 
Its AR-quiver describes the 
isomorphism classes of indecomposable modules and 
the irreducible maps between them. It does not describe the elements of the 
infinite radical 
$\Rad^{\infty}(\mo \tilde{A})=\cap_i \Rad^i(\mo \tilde{A}_n)$. 

In this article, we want to describe the whole module category $\mo\tilde{A}_n$ 
by giving a geometric combinatorial interpretation of its infinite radical. 
The indecomposable modules in $\Pp$, $\Ip$, $\T_g$ or $\T_h$ are described 
via oriented arcs in the annulus. The idea of using oriented or unoriented arcs 
has been used in various 
articles on (cluster) tubes, module/cluster categories of type $\tilde{A}_n$, 
cf. \cite{bm}, \cite{bbm}, \cite{w}, \cite{to}, \cite{bz}. We will recall the set-up necessary 
for our purpose. 
The focus of this article is the infinite radical of the module and the 
cluster categories of type $\tilde{A}_n$, restricting to the preprojective/preinjective components 
and the regular tubes $\T_g$, $\T_h$. We will introduce long moves as a new tool. 
Long moves correspond to elements in the infinite radical. 

\vskip 5pt

As a running example we take $n=4$ with $g=3$ and 
$h=2$. The quiver $Q_{3,2}$ looks as follows: 

$$
\xymatrix@=2mm@R=4mm{ 
 & \bullet^1\ar[rr] & & \bullet^2\ar[rd]    \\ 
\bullet^0\ar[ru]\ar[rrd] & &&& \bullet^3 \\ 
&& \bullet^4\ar[rru] 
}
$$ 

%The AR-quiver of $\mo\tilde{A}_n$ has several connected componets: 
The preprojective 
and preinjective components $\Pp$ and $\Ip$ are viewed as infinite horizontal tubes, 
with $\Pp$ bounded from the left by the slice containing the projective indecomposables 
and $\Ip$ bounded from the right by the slice containing the injective indecomposables. 
We are adopting the convention of \cite{br} to draw the tube $\T_h$ upside down. This will 
be conventient when describing relations involving the infinite radical. 

In addition, there are infinitely many tubes: two regular tubes $\T_g$ and $\T_h$ of rank 
$g$ and $h$ respectively and the homogeneous tubes of rank $1$, $T_{\lambda}$, 
$\lambda\in k\setminus\{0\}$. See Figure~\ref{fig:AR-3-2} for the AR-quiver of $\mo(kQ_{3,2})$. 
Note that in the figure,  the component $\T_2=\T_h$ is drawn upside down, as mentioned above. 
We write $M_i$ for the modules sitting at the mouth of $\T_g$ and $\overline{M}_i$ for the 
modules at the mouth of $\T_h$. These are the quasi-simple modules.
In order to keep the picture readable, we have omitted the dashed lines for 
the AR translate $\tau$. The effect of $\tau$ is to send a vertex horizontally 
to its neightbour to the left. For the homogeneous tubes $\T^{\lambda}$, this means that 
$\tau$ sends a vertex to itself.

\begin{figure}[h]
\begin{center}
\xymatrix@=1mm@R=2mm{ 
 & & & & & % 
_{\overline{M}_1}\ar[rd]\ar@{--}[ddd] && _{\overline{M}_2}\ar[rd] && _{\overline{M}_1}\ar@{--}[ddd] && % 
  & & &   % 
   &\\
 %%%
 & & & & & % 
& \bullet \ar[ru]\ar[rd]&& \bullet\ar[rd]\ar[ru] &&  &   % 
  & & & \vdots &% 
   &\\
%%%
&_{P_3} \ar[rd]  && \bullet \ar[rd] & \dots & % 
\bullet\ar[ru]\ar@{..>}[rd] && \bullet \ar[ru] \ar@{..>}[rd]&& \bullet  & &  % 
  & & &  \ar@{..>}[dr] & % 
    &  & \bullet\ar[rd] & &_{I_0}\\
%%%
& & _{P_4}\ar[ru]\ar[rd] & & \bullet &  % 
 & \ar@{..>}[ru] &   & \ar@{..>}[ru] &   & & % 
&&\bullet\ar[rd] \ar@{..>}[ru] && \bullet % 
 && \dots &_{I_4}\ar[ru] \\
%%%
& && _{P_0}\ar[rd]\ar[ru] & \dots &   % 
  & \ar@{..>}[rd] & & \ar@{..>}[rd] && \ar@{..>}[rd]  & % 
&&&_{N_4^{\lambda}}\ar[rd]\ar[ru] &  % 
&  & _{I_3}\ar[ru]\ar[rd] &&  \\
%%%
 && _{P_1}\ar[rd]\ar[ru] && \bullet &  % 
\bullet\ar[rd]\ar@{..>}[ru]  && \bullet\ar[rd]\ar@{..>}[ru]  && \bullet\ar[rd]\ar@{..>}[ru]  && \bullet % 
&&_{N_3^{\lambda}}\ar[ur] \ar[dr]\ar@{--}[dd]\ar@{--}[uuu]    & &_{N_3^{\lambda}}\ar@{--}[dd]\ar@{--}[uuu]% 
 &&\dots &_{I_2}\ar[rd] \\
%%%
 & _{P_2}\ar[rd]\ar[ru] && \bullet\ar[ru] & \dots & % 
& \bullet\ar[rd] \ar[ru] && \bullet\ar[rd]\ar[ru]  && \bullet\ar[rd]\ar[ru] &   % 
&&&_{N_2^{\lambda}}\ar[ur] \ar[dr]  & % 
 &  &\bullet\ar[rd]\ar[ru]  && _{I_1}\ar[rd] \\
%%%
_{P_3} \ar[ru]  && \bullet \ar[ru] & \dots && % 
_{M_1}\ar[ru]\ar@{--}[uuu] && _{M_2}\ar[ru] && _{M_3}\ar[ru] && _{M_1}\ar@{--}[uuu] % 
%&&_{N_1^{\lambda}}\ar^2[uu]  % 
&&_{N_1^{\lambda}}\ar[ur] &&_{N_1^{\lambda}} % 
 & & \dots &\bullet\ar[ru] && _{I_0} \\ 
  & \\ 
%%%
 & \Pp & & & & % 
 &  \T_3,  & & \T_2 &&& % 
 && & \T^{\lambda} & & % 
  && \Ip 
}
\end{center}
\caption{AR-quiver of $\mo kQ_{3,2}$: $\Pp$ and $\Ip$ and tubes}
\label{fig:AR-3-2}
\end{figure}

%%%%%%%%%%%%%%% NEW SECTION %%%%%%%%%%%%%%%%%%%%%

\section{A truncated translation quiver}\label{sec:bruestle}
%%%%%%%%%%%%%%%%%%%%%%%%%%%%%%%%%%%%

In \cite{br}, Br\"ustle 
defines the Auslander Reiten quiver of 
the module category $\mo kQ_{g,h}$ (the quiver $Q_{g,h}$ is called $K$ in 
\cite{br}). 
Br\"ustle then defines subquivers $Q^{\ast}$ of the AR-quiver 
formed by the 
vertices of the preprojective indecomposables (quiver $Q^P$), 
for the preinjective indecomposables (quiver $Q^I$)
and for the vertices in the tubes (quivers $Q^{\lambda}$) for 
$\ast$ appropriate (preprojective, preinjective, tubes) (Sections 2-4 of \cite{br}). 
We will now recall his notation for the vertices of the AR-quiver. The vertices 
all lie in lattices of cylindrical shape. These cylinders are cut open 
to draw the lattices in the plane, the components $Q^P$ and $Q^I$ 
are cylinders lying on their sides, bounded on the left by the projectives or 
on the right by the injectives. The tubes are oriented vertically, with 
$Q^{\infty}$ pointing downwards, all component are viewed in the plane. 
\begin{itemize}
\item 
The vertices of $Q^P$ are the $(r,i)_P$ with $r\ge 0$, $0\le i\le n$. 
The $(0,i)_P$ form the slice of projectives, 
in $Q^P$, the entry $r$ increases to the right. 
The one-dimensional projective indecomposable module $P_0$ gets the 
notation $(0,0)_P$, the largest projective indecomposable $P_g$ is $(0,g)_P$. \\
There are arrows $(r,\beta_i):(r,i+1)_P\to (r,i)_P$ and $(r,\beta_i'):(r,i)\to (r+1,i+1)$ 
for $i=0,\dots, g-1$, as well as arrows 
$(r,\alpha_i):(r,i)\to (r,i+1)$ and $(r,\alpha_i'):(r,i+1)\to (r+1,i)$ for 
$g\le i\le n$. For $0<i<g-1$ this looks as 
follows 
%Arrows between these vertices are of the form 
$$
\xymatrix@R=+0.8pc @C=+0.5pc{
 & (r,i-1)_P\ar[rd]^{(r,\beta_{i-1}')}   &  \\ 
(r,i)_P \ar[ru]^{(r,\beta_{i-1})} \ar[rd]_{(r,\beta_i')}\ar@{..}[rr]  & & (r+1,i)_P \\
 & (r,i+1)_P \ar[ru]_{(r+1,\beta_i)}
}
$$
where the second entry is taken modulo $n+1$. 
The dashed line indicates the AR translation. 
\item 
The underlying graphs of $Q^P$ and $Q^I$ can be mapped one into the other by a mirror 
symmetry along a vertical line (for the mirror axis $S$ see Figure~\ref{fig:quiver-again}). 
The vertices of $Q^I$ are the 
$(r,i)_I$ with $r\ge 0$, $0\le i\le n$ with $r$ increasing to the left. 
The $(0,i)_I$ form the slice of injective indecomposables. 
By the symmetry, for every arrow $\gamma_P:(r_1,s_1)_P\to (r_2,s_2)_P$ in $Q^P$, there is an arrow 
$\gamma_I:(r_2,s_2)_I\to (r_1,s_1)_I$ in $Q^I$. 
For $0<i<g-1$, we get subquivers 
$$
\xymatrix@R=+0.6pc @C=1pc{
 & (r,i-1)_I\ar[rd]^{(r,\beta_{i-1})}   &  \\ 
(r+1,i)_I \ar[ru]^{(r,\beta_{i-1}')} \ar[rd]_{(r,\beta_i)}\ar@{..}[rr]  & & (r,i)_I \\
 & (r,i+1)_I \ar[ru]_{(r,\beta_i')}
}
$$ 
with $k\ge 1$ and where the second entry is taken modulo $n+1$. 

\item 
The vertices of $Q^0$ are the 
$(r,s)_g$ with $r\ge 0$, $1\le s\le g$. The vertices at the mouth of $\T_g$ 
are $(0,s)_g$. The arrows in $Q^0$ are 
$\pi(r,s)=\pi(r,s)_0:(r+1,s+1)_g\to (r,s)_g$ and $\rho(r,s)=\rho_0(r,s):(r,s)_g\to (r+1,s)_g$ 
for $r\ge 0$. For $r>0$, this gives 
$$
\xymatrix@R=+.8pc{ 
 & (r+1,s)_g\ar[rd]^{\pi(r,s-1)}   &  \\ 
(r,s)_g \ar[ru]^{\rho(r,s)} \ar[rd]_{\pi(r-,s-1)}\ar@{..}[rr]  & & (r,s-1)_g &&  \\
 & (r-1,s-1)_g \ar[ru]_{\rho(r-1,s-1)}
}
$$ 
with the second entry in reduced modulo $g$. 
For $r=0$, this becomes \\
\xymatrix@R=+0.6pc @C=+0.3pc{
&&& (1,s+1)_g\ar[rd] \\ 
&& (0,s+1)_g\ar[ru] \ar@{..}[rr]  && (0,s)_g 
}
This is chosen such that the vertex $(0,g)_g$ is the $h+1$-dimensional quasi-simple indecomposable 
with a non-zero morphism from $P_g=(0,g)_P$ and with a non-zero morphism to $I_0=(0,h)_I$. 
\item 
The vertices of $Q^{\infty}$ are the 
$(r,s)_h$, $r\ge 0$, $0\le s\le h-1$. The vertices at the mouth of $\T_h$ 
are the $(0,s)_h$. 
The arrows in $Q^{\infty}$ are $\pi(r,s)=\pi_{\infty}(r,s):(r+1,s-1)_h\to (r,s)_h$ 
and $\rho(r,s)=\rho_{\infty}(r,s):(r,s)_h\to (r+1,s)_h$ for $r\ge 0$. For $r>0$, we have 
$$
\xymatrix@R=+0.8pc{ 
 & (r-1,s)_h\ar[rd]^{\rho(r-1,s)}  &  \\ 
(r,s-1)_h \ar[ru]^{\pi(r-1,s)}  \ar[rd]_{\rho(r,s-1)}\ar@{..}[rr]  & & (r,s)_h    \\
 & (r+1,s-1)_h \ar[ru]_{\pi(r,s)}  & 
}
$$ 
with $s+1=0$ if $s=h-1$. 
For $r=0$, this becomes \\
\xymatrix@R=+0.6pc @C=+0.3pc{
(0,s)_h\ar[rd] \ar@{..}[rr]  && (0,s+1)_h \\
& (1,s)_h\ar[ru]
}
This is chosen such that the vertex $(0,0)_h$ is the $g+1$-dimensional quasi-simple indecomposable 
with a non-zero morphism from $P_g=(0,g)_P$ and with a non-zero morphism to $I_0=(0,h)_I$. 
\item 
The vertices of $Q^{\lambda}$ ($\lambda\in k\setminus\{0\}$) are the 
$(\lambda,i)_{\lambda}$, $i\ge 1$. 
The vertex at the mouth of $Q^{\lambda}$ is $(\lambda,1)_{\lambda}$. 
The arrows in $Q^{\lambda}$ are as follows (for $s\ge 2$): 
$$
\xymatrix@R=+0.6pc @C=+0.3pc{
 & (\lambda,s+1)_{\lambda}\ar[rd]  &  &&& (\lambda,2)_{\lambda}\ar[rd] \\ 
(\lambda,s)_{\lambda} \ar[ru] \ar[rd]\ar@{..}[rr]  && (\lambda,s)_{\lambda} % 
 && (\lambda,1)_{\lambda}  \ar[ru] \ar@{..}[rr]  && (\lambda,1)_{\lambda}  \\
 & (\lambda,s-1)_{\lambda} \ar[ru] 
}
$$ 
\end{itemize}

The union of all these vertices gives all the isomorphism classes of indecomposable 
$kQ_{g,h}$-modules, cf. \cite[Proposition \S 1]{br} or \cite{gr}. 

In what follows, we will write $\Pp$ to denote the subcategory of $\mo kQ_{g,h}$ 
whose objects are the 
preprojective indecomposable modules, $\Ip$ for the subcategory of the preinjective 
indecomposable modules, $\T^0=\T_g$ and $\T^{\infty}=\T_h$ (the former versions 
are used in Br\"ustle, the latter are more convenient for our geometric model) 
for the subcategories formed by the 
objects in the tubes of rank $g$ and $h$ respectively and $\T^{\lambda}$ for the 
objects forming the homogenous tubes ($\lambda\in k\setminus\{0\}$). 

On the AR-quiver as described above, 
one imposes the mesh relations for any diamond (quadrilateral formed by 
four neighbours) in it and for all the triangles appearing at mouths of tubes, as indicated 
by the dotted lines. 

Figure~\ref{fig:quiver-again} below illustrates this AR-quiver. 

\medskip

Br\"ustle then truncates AR$\mo kQ_{g,h}$ to define quivers $Q_m$, for $m\ge 1$. 
This is obtained by cutting the preprojective component $Q^P$ at the 
vertices $(ghm,i)$, so that the resulting full subquiver $Q_m^P$ of $Q^P$ 
has $ghm+1$ slices, starting 
from the vertices of the projective indecomposables, 
(see Section 2 of~\cite{br}). 
Similarly, $Q^I$ is cut in order to get a full quiver $Q_m^I$ of $Q^I$, 
containing $ghm+1$ 
slices, ending at the injectives, 
(see Section 3 of~\cite{br}). 
The regular tubes are cut to form quivers that 
have cylindrical shapes with a cone on the top: 
$Q_{m}^0$ denotes the full subquiver with lowest row given by the 
$(0,i)_g$, $0\le i\le g$, and top row the single vertex $(gm+g,g)_g$. 
Similarly, 
$Q_m^{\infty}$ is the full subquiver with lowest row containing the 
$(0,i)_h$, $0\le i\le h$, and top vertex $(hm+h,h)$. 
The components $Q^{\lambda}$ are cut get the full subquivers 
$Q_m^{\lambda}$ on 
the $m+1$ vertices $(\lambda,1),\dots, (\lambda,m+2)$. 
For the tubes see Section 4 of~\cite{br}. 

Furthemore, let $\Pp_m$ be the full subcategory of $\Pp$ on the objects corresponding to 
the vertices of $Q_m^P$ 
define the subcategories $\Ip_m$, $\T^0_m$, $\T^{\infty}_m$ and $T^{\lambda}_m$ 
similarly. 
Br\"ustle proves the following statements (Sections 2,3,4 in \cite{br}): 
 
\begin{prop}
There are isomorphisms between the $k$-categories $\mathcal C(Q_m^{\ast})$, 
$\ast \in \{P,I,0,\infty\}$ or $\ast=\lambda\in k\setminus\{0\}$, 
defined by $Q_m^{\ast}$ and the corresponding categories  
$\Pp_m$, $\Ip_m$, $\T_m^0$, $\T_m^{\infty}$, $\T_m^{\lambda}$. 
\end{prop}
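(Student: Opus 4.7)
The plan is to construct, for each $\ast \in \{P,I,0,\infty\} \cup (k\setminus\{0\})$, a $k$-linear functor $F^\ast : \mathcal{C}(Q_m^\ast) \to \ast_m$, where $\ast_m$ denotes the corresponding one of $\Pp_m$, $\Ip_m$, $\T_m^0$, $\T_m^\infty$, $\T_m^\lambda$. On objects, $F^\ast$ is prescribed by the canonical labeling of vertices of $Q_m^\ast$ by isomorphism classes of indecomposable modules; on each arrow it sends the arrow to a chosen representative of the associated irreducible morphism in $\mo kQ_{g,h}$. Bijectivity on object sets is built into the labeling, so the task reduces to showing that this assignment descends to a well-defined, fully faithful functor on morphism spaces.

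Well-definedness amounts to verifying that the mesh relations imposed on $\mathcal{C}(Q_m^\ast)$ hold for the selected irreducible morphisms. These relations are exactly the ones extracted from the almost split sequences: for each non-projective vertex $X$, the AR-sequence $0 \to \tau X \to \bigoplus_i E_i \to X \to 0$ produces the relation $\sum_i f_i g_i = 0$ with $f_i,g_i$ the chosen irreducible maps, and analogous triangle relations account for the cones at the mouths of tubes. A standard rescaling argument (in the spirit of Bongartz--Gabriel, or Riedtmann's covering technique) then lets one globally normalize the chosen irreducible maps so that every mesh relation holds on the nose.

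For fully faithfulness I would compare dimensions of Hom-spaces on both sides. On the combinatorial side $\dim_k \Hom_{\mathcal C(Q_m^\ast)}(x,y)$ equals the dimension of the space of paths $x\to y$ in $Q_m^\ast$ modulo the mesh ideal, computable by knitting. On the module side $\dim_k \Hom_{kQ_{g,h}}(M_x,M_y)$ is known component-by-component from the classical description of the indecomposables over tame hereditary algebras of type $\tilde{A}$ (see \cite{gr}, \cite{ringel}). Equality of these two counts, together with the fact that any map between indecomposables in a single component is a sum of compositions of irreducible maps in that component (so $F^\ast$ is Hom-surjective), yields the isomorphism.

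The main obstacle is showing that the truncation does not destroy any morphism between surviving objects. For $Q_m^P$ and $Q_m^I$ this is automatic because morphisms between preprojective (resp.\ preinjective) modules factor through paths that remain inside $Q^P$ (resp.\ $Q^I$). For the tubes, paths may wrap around the cylinder, so one must check that the truncation heights ($ghm+g$ for $Q_m^0$, $hm+h$ for $Q_m^\infty$, $m+2$ for $Q_m^\lambda$) are large enough that every morphism between two truncated vertices is represented by a path staying below the cap. This is where the scaling of the cut-off with $m$ and with $g,h$ is essential, and the verification uses the explicit computation of $\Hom(X,\tau^i X)$ for quasi-simple $X$ together with the rank of the tube.
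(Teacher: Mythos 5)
This proposition is not proved in the paper at all: it is quoted from Br\"ustle's Diplomarbeit (Sections 2--4 of \cite{br}), so there is no in-paper argument to compare against. Your sketch follows the standard route that Br\"ustle's proof takes --- a functor defined by chosen irreducible maps, well-definedness via the mesh relations coming from almost split sequences, and full faithfulness by comparing Hom-space dimensions component by component --- and you correctly isolate the only genuinely delicate point, namely that the truncation heights (and the cone-shaped cap on the regular tubes) are large enough that no morphism between surviving objects is forced to factor outside the truncation; that verification, which you only gesture at via $\Hom(X,\tau^iX)$ for quasi-simples, is where the actual computational content of \cite{br} lies, but as an outline your proposal is sound and matches the cited proof in structure.
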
 

To describe the AR-quiver of a truncated version of $\mo\tilde{A}_n$, we then have to define 
$Q_m$ from the truncated pieces of correct size (cf. Section 5 in \cite{br}): 

\begin{dfn}
For $m\ge 1$ let $Q_m$ be the quiver whose vertices are the vertices of $Q_m^P$, of $Q_m^I$ and 
of $Q_{2m(n+1)}^{\lambda}$ for $\lambda\in k\cup\{\infty\}$, the arrows are all the arrows 
of these quivers with additional ``connecting'' arrows $\iota_0(x)$, $\kappa_0(x)$ for 
$x=0,\dots, g$, $\iota_{\infty}(y)$, $\iota_{\infty}(y)$ for $y=0,g,g+1,\dots, n$ 
and $\iota_{\lambda}$, $\kappa_{\lambda}$ for $\lambda\in k\setminus\{0\}$. 
\end{dfn}

For illustration we include a picture of $Q_m$ for the running example $(g=3,h=1)$, 
see Figure~\ref{fig:quiver-again}. It is taken from \cite{br}. 

%\newpage
\begin{figure}[htp]
\begin{center}
\psfragscanon
\psfrag{QI}{$Q_m^I$}
\psfrag{QP}{$Q_m^P$}
\psfrag{Q0}{$Q_{2m(n+1)}^{0}$}
\psfrag{QL}{$Q_{2m(n+1)}^{\lambda}$}
\psfrag{Q-inf}{$Q_{2m(n+1)}^{\infty}$}
\psfrag{i00}{\tiny $_{\iota_0(0)}$}
\psfrag{i01}{\tiny $_{\iota_0(1)}$}
\psfrag{i02}{\tiny $_{\iota_0(2)}$}
\psfrag{i03}{\tiny $_{\iota_0(3)}$}
\psfrag{k00}{\tiny $_{\kappa_0(0)}$}
\psfrag{k01}{\tiny $_{\kappa_0(1)}$}
\psfrag{k02}{\tiny $_{\kappa_0(2)}$}
\psfrag{k03}{\tiny $_{\kappa_0(3)}$}
\psfrag{ki0}{\tiny $_{\kappa_{\infty}(0)}$}
\psfrag{ki3}{\tiny $_{\kappa_{\infty}(3)}$}
\psfrag{ki4}{\tiny $_{\kappa_{\infty}(4)}$}
\psfrag{ii0}{\tiny $_{\iota_{\infty}(0)}$}
\psfrag{ii3}{\tiny $_{\iota_{\infty}(3)}$}
\psfrag{ii4}{\tiny $_{\iota_{\infty}(4)}$}
\psfrag{00}{\tiny $_{(0,0)}$}
\psfrag{01}{\tiny $_{(0,1)}$}
\psfrag{02}{\tiny $_{(0,2)}$}
\psfrag{03}{\tiny $_{(0,3)}$}
\psfrag{04}{\tiny $_{(0,4)}$}
\psfrag{ri}{\tiny $_{\rho_{\infty}}$}
\psfrag{pi}{\tiny $_{\pi_{\infty}}$}
\psfrag{r0}{\tiny $_{\rho_{0}}$}
\psfrag{p0}{\tiny $_{\pi_{0}}$}
\psfrag{il}{\tiny $_{\iota_{\lambda}}$}
\psfrag{kl}{\tiny $_{\kappa_{\lambda}}$}
\psfrag{pl1}{\tiny $_{\pi_{\lambda}(1)}$}
\psfrag{rl1}{\tiny $_{\rho_{\lambda}(1)}$}
\psfrag{20m0}{\tiny $_{(20m,0)}$}
\psfrag{6m3}{\tiny $_{(6m,3)}$}
\psfrag{6m0}{\tiny $_{(6m,0)}$}
\psfrag{30m30}{\tiny $_{(30m,3)_0}$}
\psfrag{30m3I}{\tiny $_{(30m,3)_I}$}
\psfrag{6m3I}{\tiny $_{(6m,3)_I}$}
\psfrag{S}{$S$}
\includegraphics[width=11.5cm]{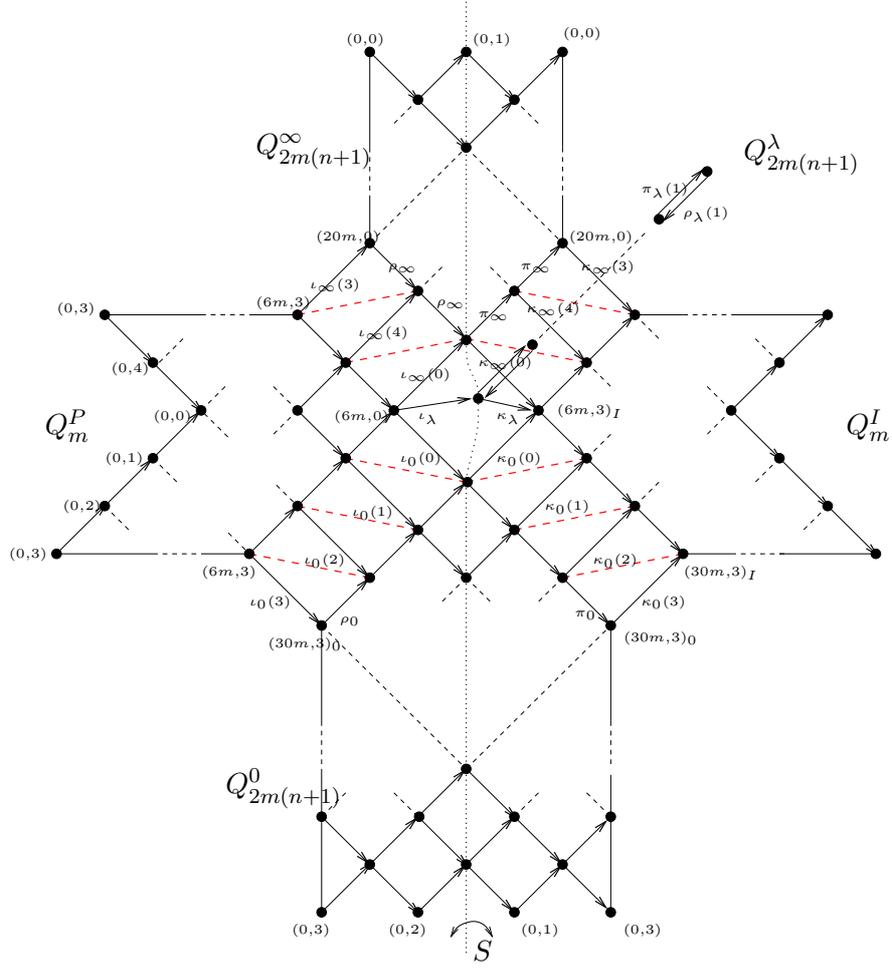}
\caption{$Q_m$ for $g=3$, $n=4$} % with $ghm=6m$, $g2m(n+1)=30m$, $h2m(n+1)=20m$}
\label{fig:quiver-again}
\end{center}
\end{figure}

\begin{dfn}
Let $m\ge 1$. 
By $\mathcal J_mQ_{g,h}$ we denote the full subcategory of $\mo\tilde{A}_n$ whose 
objects are the union of the objects of 
$\Pp_m$, $\Ip_m$, $\T_{2m(n+1)}^{\sigma}$, $\sigma\in k\cup\{\infty\}$. 
\end{dfn}

Note that for every object $M$ of $\mo\tilde{A}$ there exists $m\ge 1$ such that $M$ is isomorphic 
to a direct sum of modules from $\mathcal J_mQ_{g,h}$ 
and hence the categories $\mathcal J_m Q_{g,h}$ can be used to describe $\mo kQ_{g,h}$.

\begin{comment}
  \begin{figure}[htp]
  \begin{center}
\psfrag{QI}{$Q_m^I$}
\psfrag{QP}{$Q_m^P$}
\psfrag{Q0}{$Q_{2m(n+1)}^{0}$}
\psfrag{QL}{$Q_{2m(n+1)}^{\lambda}$}
\psfrag{Q-inf}{$Q_{2m(n+1)}^{\infty}$}
\psfrag{i00}{\tiny $_{\iota_0(0)}$}
\psfrag{i01}{\tiny $_{\iota_0(1)}$}
\psfrag{i02}{\tiny $_{\iota_0(2)}$}
\psfrag{i03}{\tiny $_{\iota_0(3)}$}
\psfrag{k00}{\tiny $_{\kappa_0(0)}$}
\psfrag{k01}{\tiny $_{\kappa_0(1)}$}
\psfrag{k02}{\tiny $_{\kappa_0(2)}$}
\psfrag{k03}{\tiny $_{\kappa_0(3)}$}
\psfrag{ki0}{\tiny $_{\kappa_{\infty}(0)}$}
\psfrag{ki3}{\tiny $_{\kappa_{\infty}(3)}$}
\psfrag{ki4}{\tiny $_{\kappa_{\infty}(4)}$}
\psfrag{ii0}{\tiny $_{\iota_{\infty}(0)}$}
\psfrag{ii3}{\tiny $_{\iota_{\infty}(3)}$}
\psfrag{ii4}{\tiny $_{\iota_{\infty}(4)}$}
\psfrag{00}{\tiny $_{(0,0)}$}
\psfrag{01}{\tiny $_{(0,1)}$}
\psfrag{02}{\tiny $_{(0,2)}$}
\psfrag{03}{\tiny $_{(0,3)}$}
\psfrag{04}{\tiny $_{(0,4)}$}
\psfrag{ri}{\tiny $_{\rho_{\infty}}$}
\psfrag{pi}{\tiny $_{\pi_{\infty}}$}
\psfrag{r0}{\tiny $_{\rho_{0}}$}
\psfrag{p0}{\tiny $_{\pi_{0}}$}
\psfrag{il}{\tiny $_{\iota_{\lambda}}$}
\psfrag{kl}{\tiny $_{\kappa_{\lambda}}$}
\psfrag{pl1}{\tiny $_{\pi_{\lambda}(1)}$}
\psfrag{rl1}{\tiny $_{\rho_{\lambda}(1)}$}
\psfrag{20m0}{\tiny $_{(20m,0)}$}
\psfrag{6m3}{\tiny $_{(6m,3)}$}
\psfrag{6m0}{\tiny $_{(6m,0)}$}
\psfrag{30m30}{\tiny $_{(30m,3)_0}$}
\psfrag{30m3I}{\tiny $_{(30m,3)_I}$}
\psfrag{6m3I}{\tiny $_{(6m,3)_I}$} 
     \includegraphics[width=11.5cm]{Qm-relations.eps}
  \end{center}\caption{\label{figcompzero2} mmmmmm}   
  \end{figure}
\end{comment}

\newpage

We need a few abbreviations to be able to write the relations in Theorem~\ref{thm:hauptsatz}: 
\begin{center}
\psfragscanon
\psfrag{QI}{$Q_m^I$}
\psfrag{QP}{$Q_m^P$}
\psfrag{Q0}{$Q_{2m(n+1)}^{0}$}
\psfrag{Q-inf}{$Q_{2m(n+1)}^{\infty}$}
\psfrag{ghm0P}{\tiny $_{(ghm,0)_P}$}
\psfrag{ghm0I}{\tiny $_{(ghm,0)_I}$}
\psfrag{i00}{\tiny $_{\iota_0(0)}$}
\psfrag{i0g}{\tiny $_{\iota_0(g)}$}
\psfrag{k00}{\tiny $_{\kappa_0(0)}$}
\psfrag{k0g}{\tiny $_{\kappa_0(g)}$}
\psfrag{ki0}{\tiny $_{\kappa_{\infty}(0)}$}
\psfrag{kig}{\tiny $_{\kappa_{\infty}(g)}$}
\psfrag{ii0}{\tiny $_{\iota_{\infty}(0)}$}
\psfrag{iig}{\tiny $_{\iota_{\infty}(g)}$}
\psfrag{ghmb}{\tiny $(ghm,\beta_{g-1})$}
\psfrag{ghmgP}{\tiny $(ghm,g)_P$}
\psfrag{h2m0}{\tiny $(2hm(n+1),0)_h$}
\psfrag{ghmgI}{\tiny $(ghm,g)_I$}
\psfrag{g2mg0}{\tiny $(2gm(n+1),g)_g$}
\psfrag{bgP}{\tiny $\beta_P^g$}
\psfrag{ahP}{\tiny $\alpha_P^h$}
\psfrag{rh}{\tiny $\rho_{\infty}^h$}
\psfrag{ph}{\tiny $\pi_{\infty}^h$}
\psfrag{bhI}{\tiny $\beta_I^h$}
\psfrag{agI}{\tiny $\alpha_I^g$}
\psfrag{pg}{\tiny $\pi_0^g$}
\psfrag{rg}{\tiny $\rho_0^g$}
\includegraphics[width=10cm]{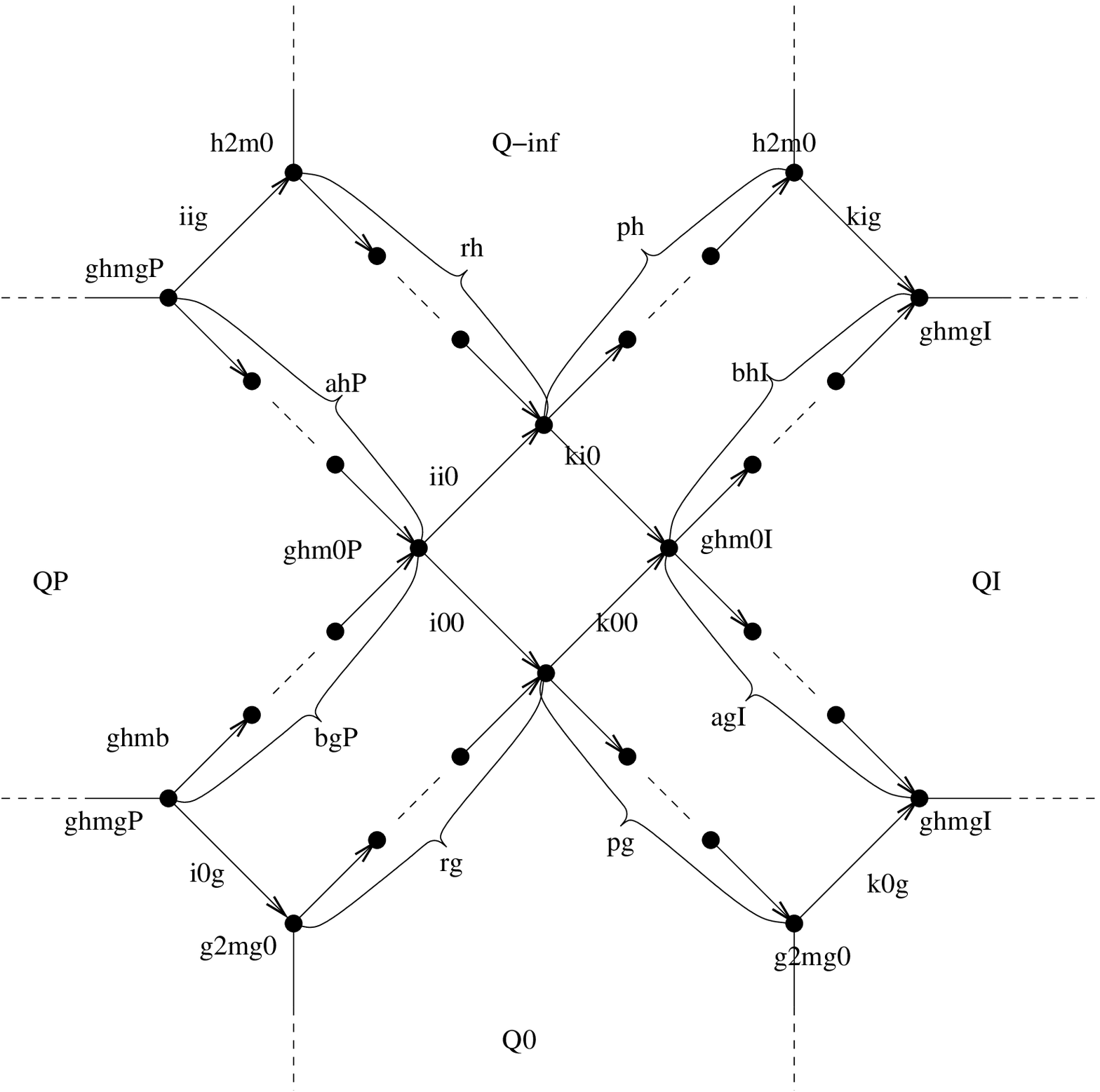}
\end{center}

\noindent
$\beta_P^g$ denotes the path of length $g$ from $(ghm,g)_P$ to $(ghm,0)_P$ 
composed by the arrows $(ghm,\beta_i)_P$, $i=g-1,\dots, 1,0$. 

\noindent
$\alpha_P^h$ denotes the path of length $h$ from $(ghm,g)_P$ to $(ghm,0)_P$ 
composed by the arrows $(ghm,\alpha_i)_P$, $i=g,g+1,\dots, n$.

\noindent
The definitions of $\alpha_I^g$, $\beta_I^g$, $\pi_0^g$, $\rho_0^g$, $\pi_{\infty}^h$ 
and $\rho_{\infty}^h$ can be understood from 
the picture above. 
Finally, we write \\
$\varepsilon_{\lambda}=\rho_{\lambda}(2m(n+1)+1)\pi_{\lambda}(2m(n+1)+1)$ for $\lambda\in k\setminus\{0\}$.

\begin{thm}[Hauptsatz]\label{thm:hauptsatz}
Let $\mathcal C(kQ_m)$ be the $k$-category  
category of $Q_m$ 
%$k$-category defined by $Q_m$ 
subject to the following relations 
\begin{itemize}
\item[(a)] 
$\gamma'\gamma=\delta'\delta$ for all arrows $\gamma$, $\gamma'$, $\delta$, $\delta'$ of $Q_m$  
in a diamond from $X$ to $Z$ with $X\ne (ghm,0)_P$. 

\vskip 5pt

\item[(b)] 
$\pi_{\lambda}\rho_{\lambda}=0 $ for all $\lambda\in k\cup \{\infty\}$ and all arrows  
$\pi_{\lambda}$, $\rho_{\lambda}$ of $Q_{2m(n+1)}^{\lambda}$ 
of the form 
$X^*\stackrel{\rho_{\lambda}}{\longrightarrow} Y\stackrel{\pi_{\lambda}}{\longrightarrow} Z^*$ 
for $X^*$ and $Z^*$ vertices at the mouth of a tube (possibly, $X^*=Z^*$). 

\vskip 5pt

\item[(c1)]
$\iota_0(g) = \pi_0^g\iota_0(0)\alpha_P^h$ \hskip 60pt 
$\iota_{\infty}(g) = \pi_{\infty}^h\iota_{\infty}(0)\beta_P^g$. 

\vskip 5pt

\item[(c2)] 
$\kappa_0(g) = \beta_I^h\kappa_0(0)\rho_0^g$ \hskip 57pt 
$\kappa_{\infty}(g) = \alpha_I^g \kappa_{\infty}(0)\rho_{\infty}^h$. 

\vskip 5pt

\item[(d)]
$\iota_{\lambda}\alpha_P^h = \lambda\iota_{\lambda}\beta_P^g 
 + \varepsilon_{\lambda}\iota_{\lambda}\beta_P^g$ \hskip 37pt
$\alpha_I^h\kappa_{\lambda} = \lambda\beta_I^g\kappa_{\lambda} 
 + \beta_I^g\kappa_{\lambda}\varepsilon_{\lambda}$. 

\vskip 5pt

\item[(e)]
$\kappa_0(0)\iota_0(0)(ghm,\alpha_n)_P=0$ \hskip 22pt $(ghm,\alpha_n)_I\kappa_0(0)\iota_0(0)=0$.  

\vskip 5pt

\item[(f)]
$\kappa_{\infty}(0)(\rho_{\infty}^h\pi_{\infty}^h)^j\iota_{\infty}(0) = 
\kappa_0(0)(\rho_0^g\pi_0^g)^{2m(n+1)+1-j}\iota_0(0)$. 

\vskip 5pt

\item[(g)] 
$\kappa_{\lambda}\varepsilon_{\lambda}^j\iota_{\lambda} = 
\sum_{i=0}^{j}{2m(n+1)+1-j+i \choose 2m(n+1)+1-j}\lambda^i
\kappa_0(0)(\rho_0^g\pi_0^g)^{j-i}\iota_0(0)$
\end{itemize}

\vskip 5pt

\noindent
with $\lambda\in k\setminus \{0\}$ and $j=0,\dots, 2m(n+1)+1$. \\
Then there is an isomorphism % $\Phi_m$ induces an isomorphism 
$$
\mathcal C(kQ_m)\to \mathcal{J}_m Q_{g,h}
$$
\end{thm}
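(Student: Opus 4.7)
The natural approach is to construct an explicit $k$-linear functor $F\colon \mathcal{C}(kQ_m) \to \mathcal{J}_m Q_{g,h}$ and verify it is an isomorphism. On objects, $F$ is forced: each vertex of $Q_m$ names an isomorphism class of indecomposable in one of $\Pp_m, \Ip_m, \T^\sigma_{2m(n+1)}$, and we send it there. On arrows, $F$ is defined componentwise on the four (or infinitely many) pieces $Q_m^P, Q_m^I, Q_{2m(n+1)}^0, Q_{2m(n+1)}^\infty, Q_{2m(n+1)}^\lambda$ using the Brüstle isomorphisms already quoted just above; what remains is to fix the images of the connecting arrows $\iota_0(0), \iota_0(g), \iota_\infty(0), \iota_\infty(g), \iota_\lambda$ and their $\kappa$-counterparts. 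Each such arrow has a non-zero target/source Hom-space of known dimension (coming from the classification of morphisms between preprojective, regular and preinjective modules over a tame hereditary algebra), and we pick a fixed non-zero representative in each. These representatives lie in $\Rad^\infty$ because they go between different Auslander--Reiten components.

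The next step is to check that the relations (a)--(g) are satisfied by the chosen images. Relations (a) and (b) are just the mesh relations and the tube-mouth zero relations; they hold by Brüstle's componentwise results. For (c1), (c2) and (e) one uses that in $\mo k\tilde{A}_n$ the $1$-dimensional Hom-spaces between the relevant indecomposables force the two sides to be proportional, and a normalisation of the chosen generators makes them equal (resp.\ zero); here the explicit knowledge of the socle/top structure of the quasi-simple modules at the mouths of $\T_g,\T_h$ and of the projectives/injectives is what fixes the scalars. Relations (d), (f), (g) encode the interaction between morphisms factoring through the homogeneous tubes and morphisms factoring through the exceptional tubes; verifying them amounts to computing the action of $\tau$-shifts and of the Auslander--Reiten sequence connecting maps, together with the well-known Gabriel--Roiter/Ringel formulas for compositions of ladder morphisms in tubes of rank $g,h$ and rank $1$.

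Once $F$ is shown to be a well-defined functor, bijectivity splits into three pieces. Bijectivity on objects is built in. Fullness follows because $\Hom$-spaces in $\mathcal{J}_m Q_{g,h}$ between any two indecomposables are spanned by compositions of irreducible maps within a component together with the chosen connecting maps and the homogeneous $\varepsilon_\lambda$; this is a standard consequence of the description of $\Rad^\infty$ on tame hereditary algebras, and the truncation to $\mathcal{J}_m$ keeps only finitely many such compositions between any two fixed indecomposables. Faithfulness is then a dimension count: by (a)--(g), the space of paths in $kQ_m$ modulo relations between two fixed vertices has dimension at most the dimension of the corresponding Hom-space in $\mathcal{J}_m Q_{g,h}$, and equality follows from surjectivity.

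The hard part is the faithfulness/dimension-count step, and inside it the verification and use of relations (f) and (g). These compare the composition $\kappa_0(0)(\rho_0^g\pi_0^g)^{*}\iota_0(0)$ (a morphism $P \to I$ passing through the rank-$g$ tube at height $*$) with compositions passing through the rank-$h$ tube or through a homogeneous tube $\T^\lambda$. The content is that up to the truncation level there is essentially a unique map $P \to I$ in each $\Hom$-space (it is $1$-dimensional), but it admits many factorisations; all these factorisations must be identified, and the precise binomial coefficients in (g) encode how an elementary ladder step in $\T^\lambda$ expands, via $\varepsilon_\lambda = \rho_\lambda \pi_\lambda$, into a linear combination of ladder steps in $\T^0$ with the prescribed weights. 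Showing that the relations (a)--(g) suffice to collapse all possible path-factorisations to this single scalar is the technical core of the proof; once this collapse is established, both fullness and faithfulness (hence the isomorphism) follow.
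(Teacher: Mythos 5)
First, a point of reference: the paper does not prove this theorem at all. It is Br\"ustle's Hauptsatz, quoted verbatim from \cite{br}, and the only thing the paper does with the relations is re-derive their geometric meaning in the appendix (for the model $\GG_m$, not for $\mathcal J_mQ_{g,h}$). So there is no in-paper proof to match your proposal against; it has to stand on its own.

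As a strategy outline (define $F$ componentwise via the already-established isomorphisms for $Q_m^P$, $Q_m^I$, $Q_{2m(n+1)}^{\sigma}$, choose representatives for the connecting arrows, verify (a)--(g), then prove fullness and faithfulness) your proposal follows the only reasonable route, and it is essentially the route Br\"ustle takes. But it contains a concrete error and a deferred core. The error: you assert repeatedly that the relevant Hom-spaces are $1$-dimensional, e.g.\ ``up to the truncation level there is essentially a unique map $P\to I$ in each Hom-space (it is $1$-dimensional)'', and you use this to dispose of (c1), (c2), (e), (f), (g) by ``normalising scalars''. This is false for tame hereditary algebras: since $\Ext^1(P,I)=0$ one has $\dim\Hom(P,I)=\langle \underline{\dim}\,P,\underline{\dim}\,I\rangle$, which already equals $2$ for $\Hom(P_0,I_g)$ in $kQ_{g,h}$ and grows linearly in $m$ for the pair $(ghm,0)_P$, $(ghm,0)_I$; likewise $\dim\Hom(P,R)$ for $R$ high up in a tube grows with the quasi-length of $R$. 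This is exactly why relation (f) is a family of $2m(n+1)+2$ \emph{distinct} identities indexed by $j$ rather than a single proportionality statement, and why (g) carries nontrivial binomial coefficients: the compositions through $\T_g$ at different heights are linearly independent elements of a high-dimensional Hom-space, and the relations record a specific change of basis. A scalar-normalisation argument cannot produce these identities; one has to compute the compositions explicitly on representations (or, as this paper does for its geometric avatar, track lifts of arcs in the universal cover).

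The second issue is that you explicitly defer the technical core: ``showing that the relations (a)--(g) suffice to collapse all possible path-factorisations \dots is the technical core of the proof; once this collapse is established \dots the isomorphism follows''. That collapse \emph{is} the theorem -- it is precisely the faithfulness of $F$, equivalently the statement that (a)--(g) is a complete set of relations -- so as written the proposal assumes what is to be proved. To close it you would need, for each pair of vertices, an explicit spanning set of paths modulo (a)--(g) whose cardinality matches $\dim\Hom$ computed from the Euler form, which is the bulk of Sections 2--5 of \cite{br}.
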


%%%%%%%%%%%%%%% NEW SubSECTION %%%%%%%%%%%%%%%%%%%%%

\begin{rem}
We observe that the relations also imply 
$$ 
(e')\  \kappa_{\infty}(0)\iota_{\infty}(0)(ghm,\beta_0)_P=0 
\hskip 10pt 
(ghm,\beta_0)_I\kappa_{\infty}(0)\iota_{\infty}(0)=0
$$
To see this, one uses (e) and (f) (with $j=1$) and the diamond relations from (a) to push the path all the way down to 
the mouth of the tube, where it will include a triangle as in (b), hence the zero relation. 
\end{rem}

%%%%%%%%%%%%%%%%%%%%%%%%%%%%%%%
%
\section{A translation quiver on arcs in the annulus}\label{sec:arcs-quiver}
%The geometric interpretations of maps and relations}
%%%%%%%%%%%%%%%%%%%%%%%%%%%%%%%%%

We now describe the set-up of the geometric model we use to describe 
$\mo\tilde{A}_n$ and later for the cluster category $\mathcal C_{\tilde{A}_n}$. It 
is similar to the one appearing in \cite{w} and in \cite{bm}. Our main focus is 
on the interpretation of the infinite radical of the module category resp. the cluster category in 
type $\widetilde{A}_n$, in terms of the geometry. 

Let $P_{g,h}$ be an annulus with $g$ marked points on the outer boundary $\partial$ and 
$h$ marked points on the inner boundary $\partial'$, $gh\ne 0$, let $g\ge h$. .

We assume that the marked 
points are distributed in equidistance on the two boundaries. 
We will identify $P_{g,h}$ with a cylinder $\Cyl_{g,h}$ of height $1$ with $g$ marked 
points on the lower 
boundary and $h$ marked points on the upper boundary. We can view this cylinder 
as a rectangle of height $1$ and width $gh$ in $\mathbb{R}^2$, identifying its two vertical 
sides. We will always label the marked points of $\Cyl_{g,h}$ from left to right on the 
lower boundary and from right to left on the upper boundary. 

\begin{figure}
\psfragscanon
\psfrag{0}{\tiny$(0,0)$}%{\tiny$O_0$}
\psfrag{00}{\tiny$(0,1)$}%{\tiny$I_0$}
\psfrag{a}{\tiny$(h,0)$}%{\tiny$O_1$}
\psfrag{b}{\tiny$(2h,0)$}%{\tiny$O_2$}
\psfrag{g}{\tiny$(gh-h,0)$}%{\tiny$O_{g-1}$}
\psfrag{dd}{\tiny$\dots$}
%\psfrag{}{\tiny$(\frac{}{},0)$}
\psfrag{m}{\tiny$(g,1)$}%{\tiny$I_1$}
\psfrag{r}{\tiny$(gh-g,1)$}%{\tiny$I_{h-1}$}
\includegraphics[scale=.45]{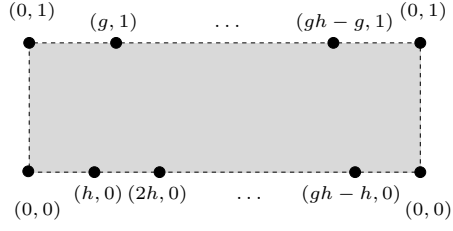}
\caption{Annulus  via rectangle $\Cyl_{g,h}$}\label{fig:arcsPgh}
\end{figure}

On the lower boundary (the outer boundary of the annulus), we choose the points 
$(0,0)$, $(h,0)$, $\dots$, $(gh-h,0)$ 
and $(gh,0)$ as marked points. 
On the upper boundary (the inner boundary of the 
annulus), we choose the points $(0,1)$, $(g,1)$, $\dots, (gh-g,1)$ and $(gh,1)$ 
to be marked points, as in Figure~\ref{fig:arcsPgh}. 

With the annulus in mind, we denote the points on the lower boundary by $i_{\partial}$, 
for $0\le i \le g-1$: 
$$
0_{\partial}:=(0,0)=(gh,0), 1_{\partial}=(h,1), \dots, (g-1)_{\partial}:=(hg-h,0)
$$ 
and the points on the upper boundary by $j_{\partial'}$, for $0\le j\le h-1$: 
$$
0_{\partial'}:=(gh,1)=(0,1), 1_{\partial'}:=(g,1), 
\dots, (h-1)_{\partial'}:=(g(h-1),1),
$$ 
We thus label marked points on both boundaries from the left to the right, by $\mathbb{Z}_b$ 
with $b\in \{\partial,\partial'\}$.

%In case $g=3$, $h=2$, this gives: 
%\begin{center}
%\psfragscanon
%\psfrag{0}{\small $0_{\partial}$}%{\tiny$O_0$}
%\psfrag{00}{\small $0_{\partial'}$}%{\tiny$I_0$}
%%
%\psfrag{a}{\small $2_{\partial}$}%{\tiny$O_2$}
%\psfrag{b}{\small $1_{\partial}$}%{\tiny$O_1$}
%\psfrag{c}{\small $0_{\partial}$}%{\tiny$O_0$}
%\psfrag{m}{\small $1_{\partial'}$}%{\tiny$I_1$}
%\psfrag{n}{\small $0_{\partial'}$}%{\tiny$I_0$}
%
%\includegraphics[scale=.4]{cylinder.eps}
%\end{center}

For our purposes, it will be most 
convenient to work in the universal cover $\U=(\U,\pi_{gh})$ of $\Cyl_{g,h}$ 
with $\U=\{(x,y)\in \mathbb{R}^2\mid 0 \le y\le 1\}$ an infinite strip in the plane. 
It inherits the orientation from its embedding in $\mathbb{R}^2$. The covering map 
$\pi_{gh}:\U\to \Cyl_{g,h}$ is induced from wrapping 
$\U$ around $\Cyl_{g,h}$, it takes the first entry of $(x,y)\in \U$ modulo $gh$: 
$$
\pi:=\pi_{gh}:\U\to \Cyl_{g,h}, \quad\quad (x,y)\mapsto (x\mod hg, y). 
$$

As we can identify the annulus $P_{g,h}$ with $\Cyl_{g,h}$, 
$\pi$ is also a covering map of $P_{g,h}$. 

We take as marked points on the lower boundary the points 
$\{(hx,0)\mid x\in \mathbb{Z}\}$ and as marked points on the upper boundary of 
$\U$ the points $\{(gx,1)\mid x\in \mathbb{Z}\}$, see Figure~\ref{fig:cover}.

When working in the universal cover, it is most convenient to 
use integers to denote marked points on the lower and upper boundary. 
We will write subscripts to indicate the boundary on which the points sit. 
So for $i,j\in\mathbb{Z}$, $i_{\partial}$ is a marked point on the lower boundary 
and $j_{\partial'}$ a marked point on the upper boundary. 
We do this in such a 
way that the copy of $\Cyl_{g,h}$ with vertices $(0,0)$ and $(gh,0)$ obtains 
the labels $0_{\partial}, 1_{\partial}, \dots, g_{\partial}$ on the lower boundary 
and the labels $0_{\partial'}, 1_{\partial'}, \dots, h_{\partial'}$ on the upper 
boundary, as shown for the case $g=3,h=2$ in Figure~\ref{fig:cover}. More 
generally, for fixed $g,h$, with $g\ge h\ge 1$, we endow $\U$ with the following set of marked points 
$$
\begin{array}{c}
\{i_{\partial} =  (ih,0) \mid i\in\mathbb{Z}\} \\ 
\{j_{\partial'} =  (jg,1)\mid j\in\mathbb{Z}\} 
\end{array}
$$
placing $0_{\partial'}$above $0_{\partial}$, $h_{\partial'}$ above 
$g_{\partial}$ etc.

\begin{figure}[h]
\psfragscanon
\psfrag{0}{\tiny$(6,0)$}
\psfrag{00}{\tiny$(6,1)$}
\psfrag{x}{$\cdots$}
\psfrag{a}{\tiny$(4,0)$}
\psfrag{b}{\tiny$(2,0)$}
\psfrag{c}{\tiny$(0,0)$}
\psfrag{d}{\tiny$(-2,0)$}
\psfrag{e}{\tiny$(-4,0)$}
\psfrag{f}{\tiny$(-6,0)$}
\psfrag{g}{\tiny$(8,0)$}
\psfrag{h}{\tiny$(10,0)$}
\psfrag{i}{\tiny$(12,0)$}
%\psfrag{}{\tiny$(\frac{}{},0)$}
\psfrag{m}{\tiny$(3,1)$}
\psfrag{n}{\tiny$(0,1)$}
\psfrag{o}{\tiny$(-3,1)$}
\psfrag{p}{\tiny$(-6,1)$}
\psfrag{r}{\tiny$(9,1)$}
\psfrag{s}{\tiny$(12,1)$}
\psfrag{d0}{\color{blue}\small $0_{\partial}$}
\psfrag{d1}{\color{blue}\small $1_{\partial}$}
\psfrag{d2}{\color{blue}\small $2_{\partial}$}
\psfrag{d3}{\color{blue}\small $3_{\partial}$}
\psfrag{d4}{\color{blue}\small $4_{\partial}$}
\psfrag{d5}{\color{blue}\small $5_{\partial}$}
\psfrag{d6}{\color{blue}\small $6_{\partial}$}
\psfrag{d-1}{\color{blue}\small $-1_{\partial}$}
\psfrag{d-2}{\color{blue}\small $-2_{\partial}$}
\psfrag{d-3}{\color{blue}\small $-3_{\partial}$}
\psfrag{f0}{\color{blue}\small $0_{\partial'}$}
\psfrag{f1}{\color{blue}\small $-1_{\partial'}$}
\psfrag{f2}{\color{blue}\small $-2_{\partial'}$}
\psfrag{f-1}{\color{blue}\small $1_{\partial'}$}
\psfrag{f-2}{\color{blue}\small $2_{\partial'}$}
\psfrag{f-3}{\color{blue}\small $3_{\partial'}$}
\psfrag{f-4}{\color{blue}\small $4_{\partial'}$}

\includegraphics[scale=.46]{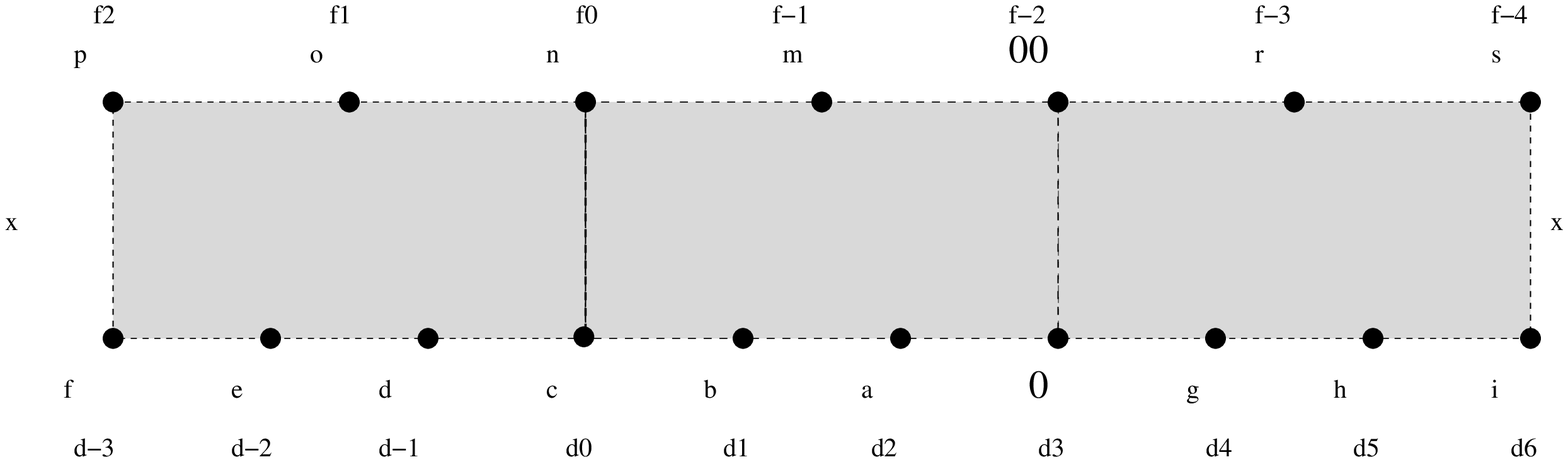}
\caption{Universal cover for $g=3$, $h=2$}\label{fig:cover}
\end{figure}

%%%%%%%%%%%%%%% NEW SUBSECTION %%%%%%%%%%%%%%%%%%%%%
%
\subsection{Arcs in $P_{g,h}$ and in the universal cover} 

We want to consider certain oriented arcs between marked points of the annulus $P_{g,h}$ 
to define a stable translation quiver $\Gamma$. 
We recall 
that our goal is to show that $\Gamma$ is isomorphic to the AR-quiver of the module 
category of type $\tilde{A}$, with underlying quiver $Q_{g,h}$ and to describe the 
infinite radical in terms of moves between arcs. 
Passing to unoriented 
arcs, we will then obtain a quiver which is isomorphic to 
the AR-quiver of the cluster category of 
type $\tilde{A}$ of the same underlying quiver 
$Q_{g,h}$.  

We will call 
the arrows in this quiver \textit{moves}. 
There are two types of moves, namely \textit{elementary moves} and \textit{long moves}. 
We describe these moves below. 
In the oriented case, elementary moves between arcs of an annulus 
have appeared in~\cite{w} and \cite{bm} independently 
to describe (components of) module categories of type $\tilde{A}$. 
In the unoriented case, elementary moves appeared in \cite{bz} for the cluster category 
of a marked surface, in \cite{gehrig} for tubes 
and in \cite{to} for the $m$-cluster category of an annulus. 
%The elementary moves were defined in \cite{bz} for the cluster category and \cite{to} 
%for the $m$-cluster category. 
To our knowledge, long moves of (oriented/unoriented) arcs have not been considered 
so far.

To define an oriented arc of $\U$, we need to give its start and 
end point as well as its winding number around the inner boundary. 
A convenient way to do define such arcs is to work in the universal cover instead. 

We need to recall a few facts before we can do this, cf. \cite[\S 2.2]{bm} in the case 
with marked points on one boundary. 
Let $\sigma:\U\to \U$ be the 
translation $(x,y)\mapsto (x+gh,y)$, with inverse $\sigma^{-1}(x,y)=(x-gh,y)$. The group 
$G=\langle \sigma\rangle$ acts naturally on $\U$. 
Note that in terms of $z_{\partial}$ and $z_{\partial'}$, 
$\sigma$ acts on vertices 
of the lower boundary as $z_{\partial}\mapsto (z+g)_{\partial}$ and on the 
upper boundary as $z_{\partial'}\mapsto (z+h)_{\partial'}$. 

An {\emph{oriented arc} in $\U$} is an isotopy class of arcs joining marked points of the boundary of $P_{g,h}$. 
If an arc starts at a marked point $x_{b_1}$ and ends at a marked point 
$y_{b_2}$ with $x\in\mathbb{Z}$, $b_i\in\{\partial,\partial'\}$, we write $[x_{b_1},y_{b_2}]$ for 
this arc (defined only up to isotopy fixing endpoints). 
%We have 
%\begin{eqnarray*}
%\sigma([x_{\partial},y_{\partial}]) & = & [(x+g)_{\partial}, (y+g)_{\partial}] \\
%\sigma([x_{\partial},y_{\partial'}]) & = & [(x+g)_{\partial}, (y+h)_{\partial'}] \\
%\sigma([x_{\partial'},y_{\partial}]) & = & [(x+h)_{\partial'}, (y+g)_{\partial}] \\
%\sigma([x_{\partial'},y_{\partial'}]) & = & [(x+h)_{\partial'}, (y+h)_{\partial'}] 
%\end{eqnarray*}

We will distinguish two main types of arcs in $\U$ and in $P_{g,h}$.

\begin{dfn}\label{defn:bridging}
Let $\alpha=[x_{b_1},y_{b_2}]$ be an arc in $\U$. If $b_1\ne b_2$, then we say 
that $\alpha$ is a \emph{bridging arc}. \\
If $b_1=b_2$ and $x\le y-2$, we say that $\alpha$ is a \emph{peripheral arc} and that 
$\alpha$ is \emph{based at $b_1$}. 
\end{dfn}

We do not consider arcs in the remaining cases. 
%Note that arcs of type 1 are of the form $[a_{\partial}, b_{\partial'}]$ or 
%$[c_{\partial'},d_{\partial}]$, for $a,b,c,d\in \mathbb{Z}$.
\begin{dfn}
A \emph{bridging} arc in $P_{g,h}$ is an arc $\pi_{g,h}(\alpha)$ where $\alpha$ is a bridging 
arc of $\U$. Similarly, a \emph{peripheral} arc of $P_{g,h}$ is an arc $\pi_{g,h}(\alpha)$ for 
$\alpha$ a peripheral arc of $\U$. 
\end{dfn} 

\begin{rem}
If $\alpha$ is an arc of $P_{g,h}$ starting at $\partial$ (at $\partial'$) it has a unique 
lift $\tilde{\alpha}$ in $\U$ with starting point in $\{0_{\partial}, 1_{\partial}, \dots, (g-1)_{\partial}\}$ 
(with starting point  $\{0_{\partial'}, 1_{\partial'}, \dots, (h-1)_{\partial'}\}$). 
%We will call this lift the {\em canonical lift of $\alpha$}. 

All other lifts of $\alpha$ (of $\beta$) can be obtained by applying $\sigma$ or $\sigma^{-1}$ 
repeatedly to $\tilde{\alpha}$, hence 
$\pi_{g,h}^{-1}(\alpha)=G\tilde{\alpha}$. 
\end{rem}

Bridging arcs will serve as models for the preprojective and preinjective components, 
peripheral arcs correspond to objects of the tubes $\T_g$, $\T_h$, as we will explain in 
Sections~\ref{sec:arcs-in-P} 
and \ref{sec:arcs-in-I}.

%%%%%%%%%%%%%%%%%%%%%%%%%%
%
\subsection{Elementary moves} \label{sec:elem-moves}
%
%%%%%%%%%%%%%%%%%%%%%%%%%%

The geometric idea is that a move rotates an arc clockwise around one of its endpoints. 
For elementary moves, this is a ``shortest'' clockwise rotation. 
We keep the idea of rotating in mind for the long moves. 

Let $g\ge h>0$ be fixed, let $\pi=\pi_{g,h}:\U\to P_{g,h}$ be the covering map. 
Let $\alpha$ be an arc of $P_{g,h}$ and let $[x_{b_1},y_{b_2}]$ be a lift of $\alpha$, 
with $b_1,b_2\in\{\partial, \partial'\}$. 
Since an elementary move gives rise to an irreducible map in the module category, 
it should be a minimal rotation: it fixes 
one endpoint of an arc and moves one endpoint by $\pm 1$ (depending on the boundary 
this sits on).  \\ 
So there are at most two elementary moves from $\alpha$, namely to the 
arcs $\pi([x_{b_1},(y\pm 1)_{b_2}])$ and 
$\pi([(x\pm 1)_{b_1},y_{b_2}])$, where $\pm 1$ is $-1$ on the boundary $\partial$ 
and $+1$ on $\partial'$. 

Note that when drawing vertices and moves between them, 
we will always use the convention that arrows (moves) go to the right. Furthermore, we will 
draw moves fixing the starting point, i.e. the first entry of an arc in $\U$ as pointing 
down and moves fixing the endpoint, i.e. the second entry of the arc in $\U$ 
will be pointing up. 

We will sometimes also need to work with moves between arcs in $\U$ itself, they 
are defined analogously. 

Observe that  if $b_1=b_2$, 
one of the images might 
be isotopic to a segment of the boundary. In this case, there is only one 
elementary move from $\alpha$. \\
On the other hand, if an arc corresponds to an injective indecomposable, 
it should only have an elementary move to another arc for an injective indecomposable 
or no elementary move, if $\alpha$ is the terminal object, i.e. if $\alpha$ corresponds 
to the 1-dimensional injective module. 

We use lifts to describe the elementary moves case-by-case. 

\begin{itemize}
\item Bridging arcs: 
Let $\alpha$ be a bridging arc of $P_{g,h}$. Then we define two elementary 
moves 
starting at $\alpha$. \\
(i) If the %canonical 
lift of $\alpha$ in $\U$ is 
$[i_{\partial},y_{\partial'}]$ for some $i,y\in \mathbb{Z}$, %$0\le i< g$, 
they are of the form 
%Then there are exactly two elementary moves starting at $\alpha$:
			$$\xymatrix@R=0.3pc{
			       &\pi([(i-1)_{\partial},y_{\partial'}])\\
			       \color{blue}{\alpha=\pi([i_{\partial},y_{\partial'}])}\ar[ur]\ar[dr]&\\
			       &\pi([i_{\partial},(y+1)_{\partial'}])
			}$$
The corresponding picture in $\U$ is: 
\begin{center}
\psfragscanon
\psfrag{1}{\tiny $(i-1)_{\partial}$}
\psfrag{2}{\tiny $i_{\partial}$}
\psfrag{3}{\tiny $(y+1)_{\partial'}$}
\psfrag{4}{\tiny $y_{\partial'}$}
\includegraphics[scale=.4]{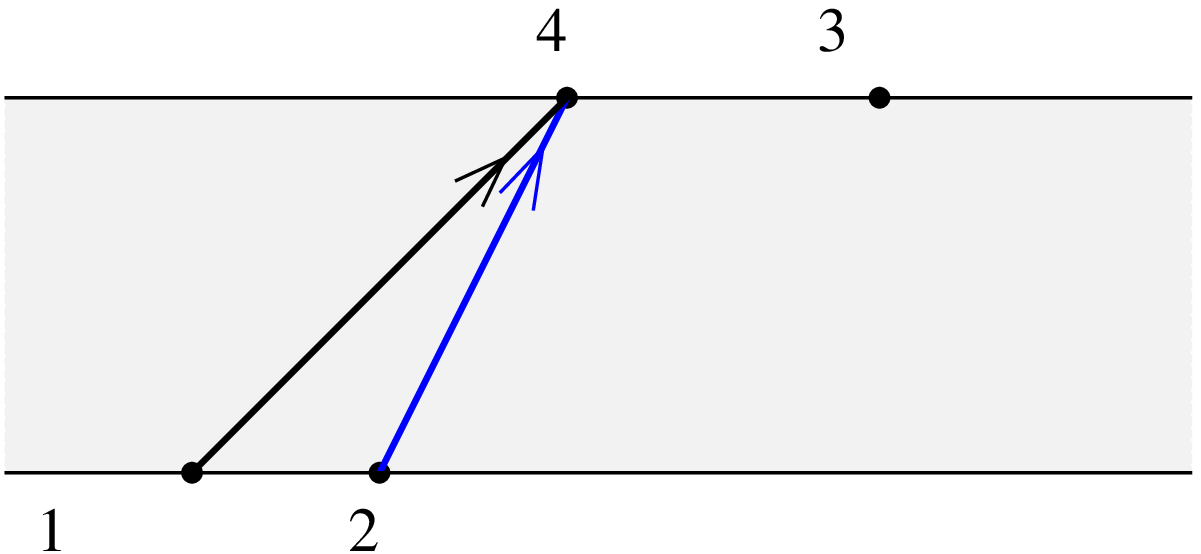}
\hskip 1cm
\includegraphics[scale=.4]{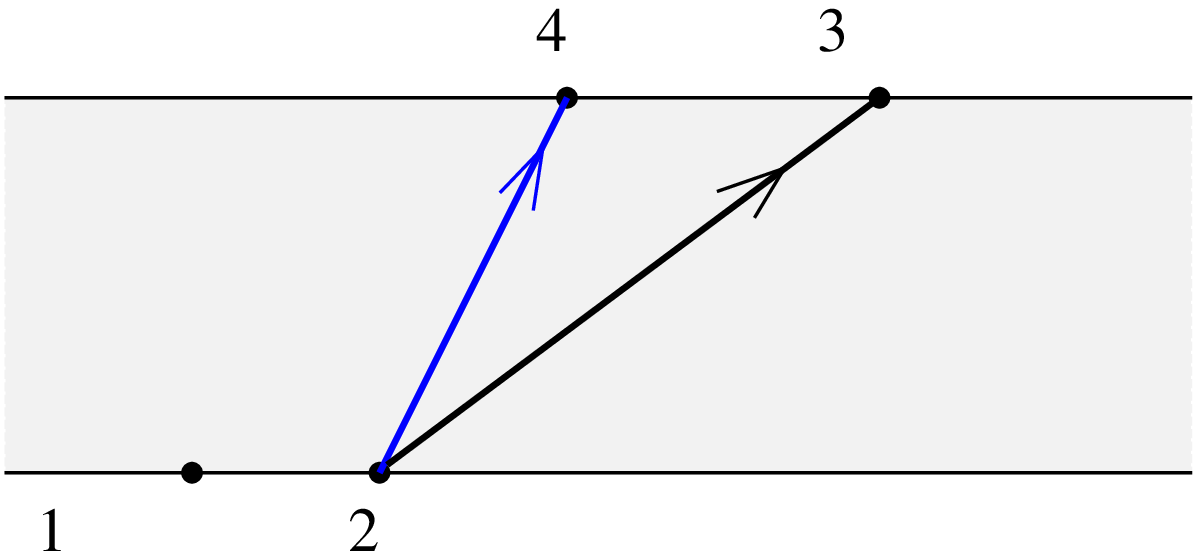}
\end{center}

\noindent
(ii) If the %canonical 
lift of $\alpha$ in $\U$ is $[j_{\partial'},x_{\partial}]$ with $j,x\in \mathbb{Z}$, 
%$0\le j< h$, 
they are of the form 
			$$\xymatrix@R=0.3pc{
			       &\pi([(j+1)_{\partial'},x_{\partial}])\\
			       \color{blue}{\alpha=\pi([j_{\partial'},x_{\partial}])}\ar[ur]\ar[dr]&\\
			       &\pi([j_{\partial'},(x-1)_{\partial}])
			}$$
In pictures in $\U$:  
\begin{center}
\psfragscanon
\psfrag{1}{\tiny $(x-1)_{\partial}$}
\psfrag{2}{\tiny $x_{\partial}$}
\psfrag{3}{\tiny $(j+1)_{\partial'}$}
\psfrag{4}{\tiny $j_{\partial'}$}
\includegraphics[scale=.4]{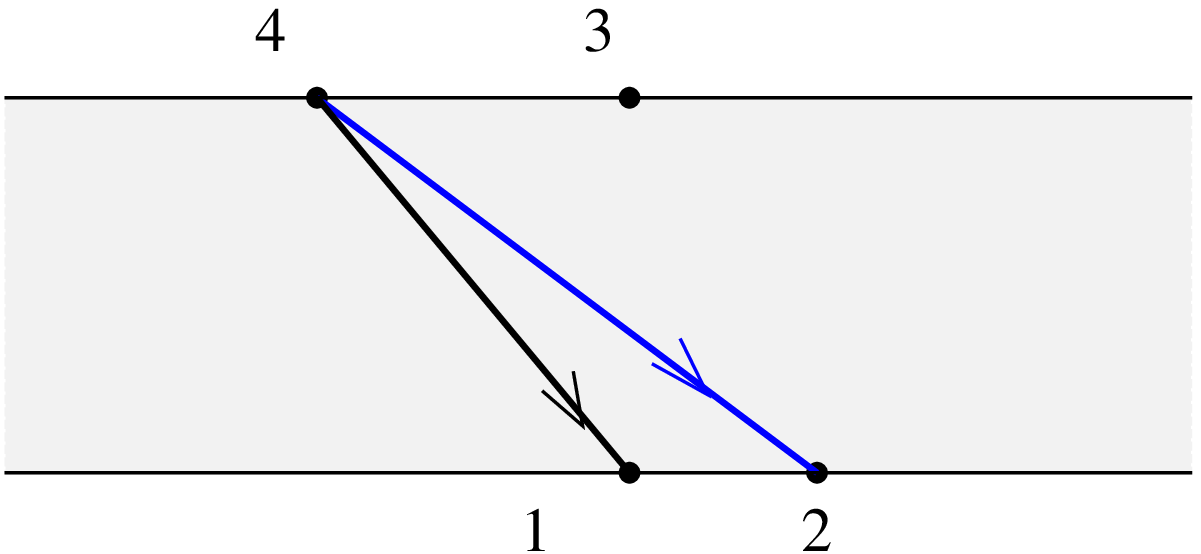}
\hskip 1cm
\includegraphics[scale=.4]{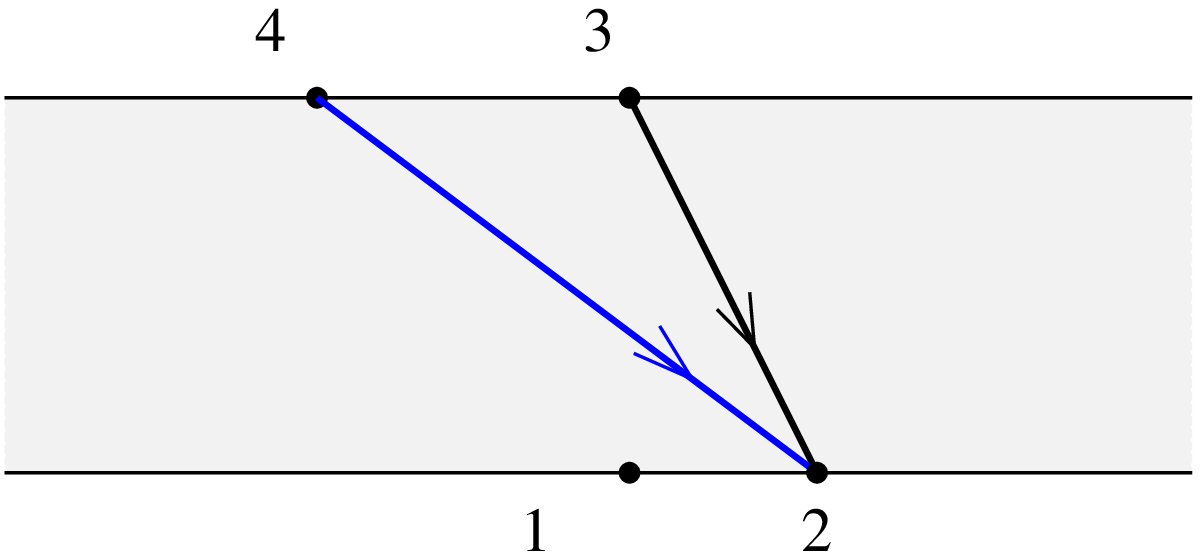}
\end{center}

\item 
Peripheral arcs: 
Let $\alpha$ be a peripheral arc of $P_{g,h}$. \\
(i) Assume that the %canonical 
lift of $\alpha$ is 
$[i_{\partial},a_{\partial}]$ with $i,a\in\mathbb{Z}$. %($0\le i<g$, $a\ge i+2$). 
	\begin{itemize}
		\item If $a=i+2$, there is only one elementary move from $\alpha$: 
 			$$\xymatrix@R=0.4pc{
			       &\pi([(i-1)_{\partial},(i+2)_{\partial}])\\
			       \textcolor{blue}{\alpha=\pi([i_{\partial},(i+2)_{\partial}])}\ar[ur] & 
			}$$
%		$$\alpha=\pi([i_{\partial},(i+2)_{\partial}])\rightarrow \pi([(i-1)_{\partial},(i+2)_{\partial}])$$
		\item If $a>i+2$, there are two elementary moves from $\alpha$: 
 			$$\xymatrix@R=0.3pc{
			       &\pi([(i-1)_{\partial},a_{\partial}])\\
			       \textcolor{blue}{\alpha=\pi([i_{\partial},a_{\partial}])}\ar[ur]\ar[dr]&\\
			       &\pi([i_{\partial},(a-1)_{\partial}])
			}$$
	\end{itemize}
In $\U$, the case $a>i+2$ looks as follows: 

\begin{center}
\psfragscanon
\psfrag{1}{\tiny $(i-1)_{\partial}$}
\psfrag{2}{\tiny $i_{\partial}$}
\psfrag{3}{\tiny $(a-1)_{\partial}$}
\psfrag{4}{\tiny $a_{\partial}$}
\includegraphics[scale=.38]{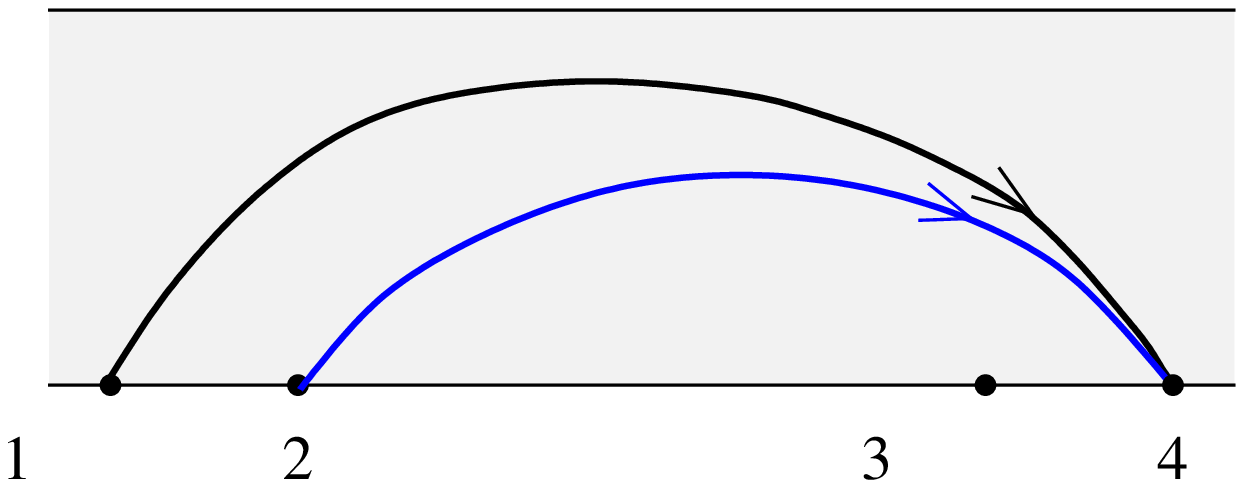}
\hskip 1cm
\includegraphics[scale=.38]{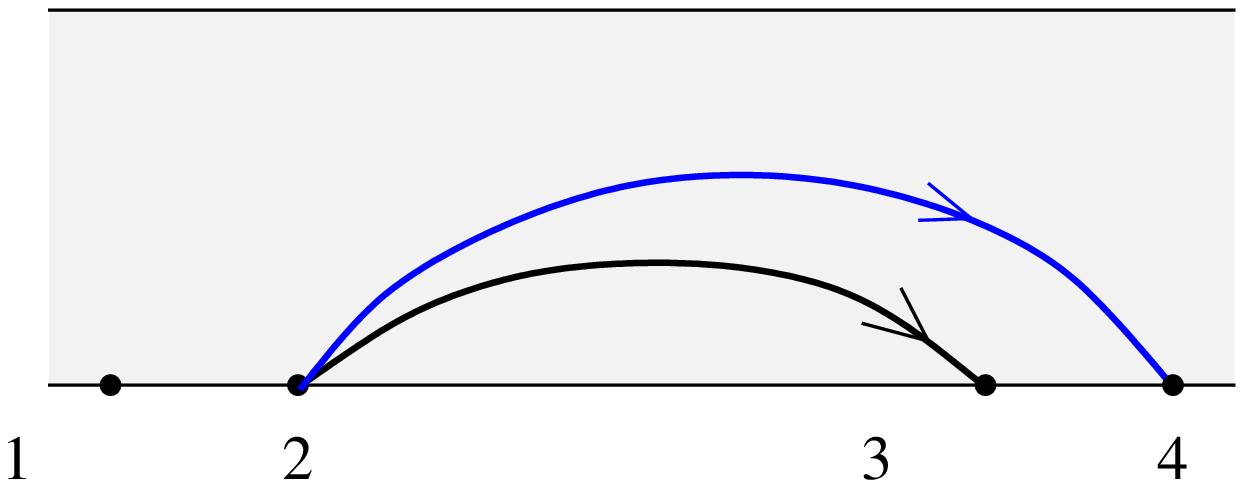}
\end{center}

\noindent
(ii) 
Let $[j_{\partial'},b_{\partial'}]$ be a %the canonical 
lift of $\alpha$, with $b,j\in\mathbb{Z}$. %(with $0\le j<h$ and $b\ge j+2$) . 
	\begin{itemize}
		\item If $b=j+2$, there is one elementary move from $\alpha$: 
			$$\xymatrix@R=0.4pc{
			       \textcolor{blue}{\alpha=\pi([j_{\partial},(j+2)_{\partial}])}\ar[dr] & \\
			       &\pi([j_{\partial},(j+3)_{\partial}])
			}$$
%		$$\pi([j_{\partial'},(j+2)_{\partial'}])\rightarrow \pi([j_{\partial'},(j+3)_{\partial'}])$$
		\item If $b>j+2$, there are two elementary moves from $\alpha$:
			$$\xymatrix@R=0.3pc{
			       &\pi([(j+1)_{\partial'},b_{\partial'}])\\
			       \textcolor{blue}{\beta=\pi([j_{\partial'},b_{\partial'}])}\ar[ur]\ar[dr]&\\
			       &\pi([j_{\partial'},(b+1)_{\partial'}])
			}$$
	\end{itemize}
For $b>j+2$, we have the following picture in $\U$: 
\begin{center}
\psfragscanon
\psfrag{1}{\tiny $(b+1)_{\partial'}$}
\psfrag{2}{\tiny $b_{\partial'}$}
\psfrag{3}{\tiny $j_{\partial'}$}
\psfrag{4}{\tiny $(j+1)_{\partial'}$}
\includegraphics[scale=.38]{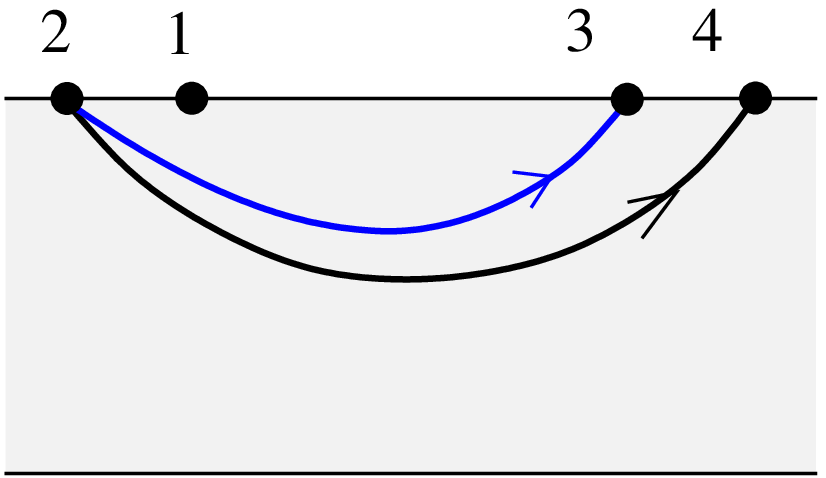}
\hskip 1cm
\includegraphics[scale=.38]{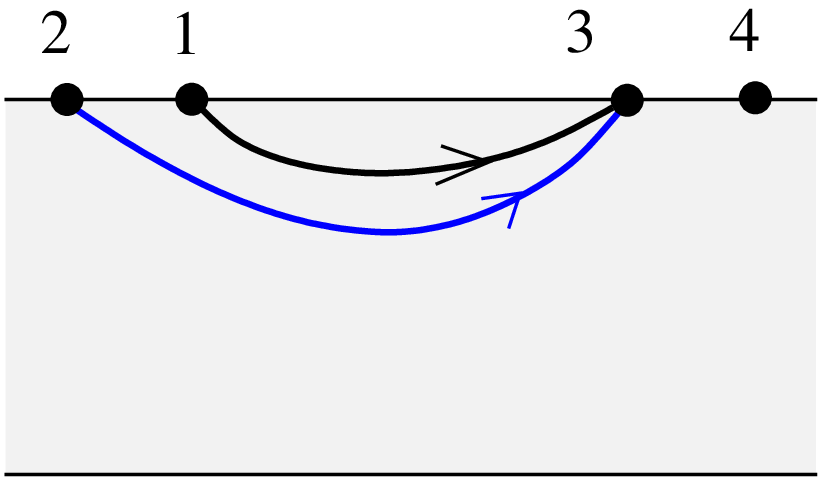}
\end{center}
\end{itemize}

%%%%%%%%%%%%%%%%%%%%%%%%%%
%
\subsection{Preprojective arcs} \label{sec:arcs-in-P}
%
%%%%%%%%%%%%%%%%%%%%%%%%%%

Recall that the quiver $Q_{g,h}$ has its source at vertex 0, its sink at vertex $g$ 
(cf. Section~\ref{ssec:AR-quiver-mod}). 
Hence, among the projective indecomposable modules, $P_g$ is one-dimensional, with irreducible morphisms 
to $P_{g-1}$ and to $P_{g+1}$. 
$P_0$ is the largest projective indecomposable module, with irreducible 
morphisms 
from $P_1$ and from $P_{n}$. The preprojective component of $\mo kQ_{g,h}$ starts with the 
projective indecomposable 
modules (cf. Figure~\ref{fig:AR-3-2} for $g=3$, $h=2$). 
%and left hand side of Figure~\ref{fig:AR-preprojective}). 
All other indecomposable modules in $\Pp$ are reached through compositions of irreducible 
maps from the $P_i$. 

We now describe a subset of the bridging arcs from $\partial$ to $\partial'$ that will play the 
role of projective indecomposable modules. 

Let 
$$
\beta_i:=\left\{\begin{array}{ll} 
			\pi([(i-g)_{\partial},0_{\partial'}]) & i=0,1,\dots, g-1,g  \\
%			\pi([(-g)_{\partial},0_{\partial'}]) & i=0 \\
			\pi([0_{\partial},(i-g)_{\partial'}]) & i=g+1,g+2,\dots, n 
		\end{array}
		\right.
$$

%\begin{figure}[h]
%\begin{center}
%\xymatrix@=1mm@R=2mm{ 
%&P_g\ar[rd]\ar@{.}[rr] & & \bullet\ar[rd] & & & & % 
%&& && &   & \bullet\mbox{\phantom{b}}\ar@{.}[rr]\ar[rd] & & I_0
%\\
%&&P_{g+1} \ar[ru] \ar[rd]  &&  &&& % 
%&& && &\cdots &  & I_n\ar[ru]\\
%& && \rotatebox{352}{$\ddots$}  \ar[rd] &  &&&% 
%&& && &&   \ \ \rotatebox{5}{$\iddots$} \ar[ru] & && 
%\\
%& &&& P_0  & \dots & & % 
%&& && &\mbox{\phantom{b}} I_g\ar[ru] \ar[rd] & & &&& 
%\\
% &&& P_1 \ar[ru] && &&% 
%&& && &&\   I_{g-1}\ar[rd] &&&  
%\\
%& & \ \rotatebox{5}{$\iddots$} \ar[ru] && && &% 
%&& && && & \rotatebox{348}{$\ddots$}\ar[rd] &  
%\\
%& P_{g-1} \ar[ru]\ar[rd]   && \ \rotatebox{5}{$\iddots$}  & &&& % 
%&& && &   & \cdots &  & \mbox{\phantom{b}}I_1 \ar[rd] 
%\\ 
%P_g\ar[ru]\ar@{.}[rr] &&\bullet\ar[ru] & \dots &&&& % 
%&& &&  && & \bullet\mbox{\phantom{b}}\ar@{.}[rr]\ar[ru] & & I_0
%}
%\end{center}
%\caption{Slices with the projectives/injectives} 
%\label{fig:AR-preprojective}
%\end{figure}

\begin{dfn}
The arcs $\beta_0,\dots, \beta_{n}$ of $P_{g,h}$ are called {\em projective arcs}. 
Figure~\ref{fig:projective-arcs} 
displays the $\beta_i$. 
We call an arc $\alpha=[x_{\partial},y_{\partial'}]$ of $P_{g,h}$ {\em preprojective} if it can be reached 
from $\beta_g$ through a finite (possibly empty) 
sequence of elementary moves. In particular, the $\beta_i$ ($i=0,\dots, n$) are 
preprojective arcs. 
\end{dfn}

%If $\alpha$ is a bridging arc from $\partial$ to $\partial'$, we say that $\alpha$ is a preprojective arc. 
Roughly speaking, the preprojective arcs of $P_{g,h}$ 
are winding positively from the outer to the inner boundary, with 
initial object $\beta_g$.

\begin{figure}
\psfragscanon
\psfrag{g-1}{\tiny$_{g-1}$}
\psfrag{h-1}{\tiny $_{h-1}$}
\psfrag{g}{\tiny$_{g}$}
\psfrag{h}{\tiny $_{h}$}
\psfrag{0}{\tiny$_0$}
\psfrag{1}{\tiny$_1$}
\psfrag{2}{\tiny$_2$}
\psfrag{a}{\small $\beta_g$}
\psfrag{b}{\small $\beta_{g-1}$}
\psfrag{c}{\small $\beta_{1}$}
\psfrag{d}{\small $\beta_{0}$}
\psfrag{e}{\small $\beta_n$}
\psfrag{f}{\small $\beta_{g+1}$}
\psfrag{rot0}{rotations around $0_{\partial}$}
\psfrag{roth}{rotations around $0_{\partial'}$}
\includegraphics[scale=.5]{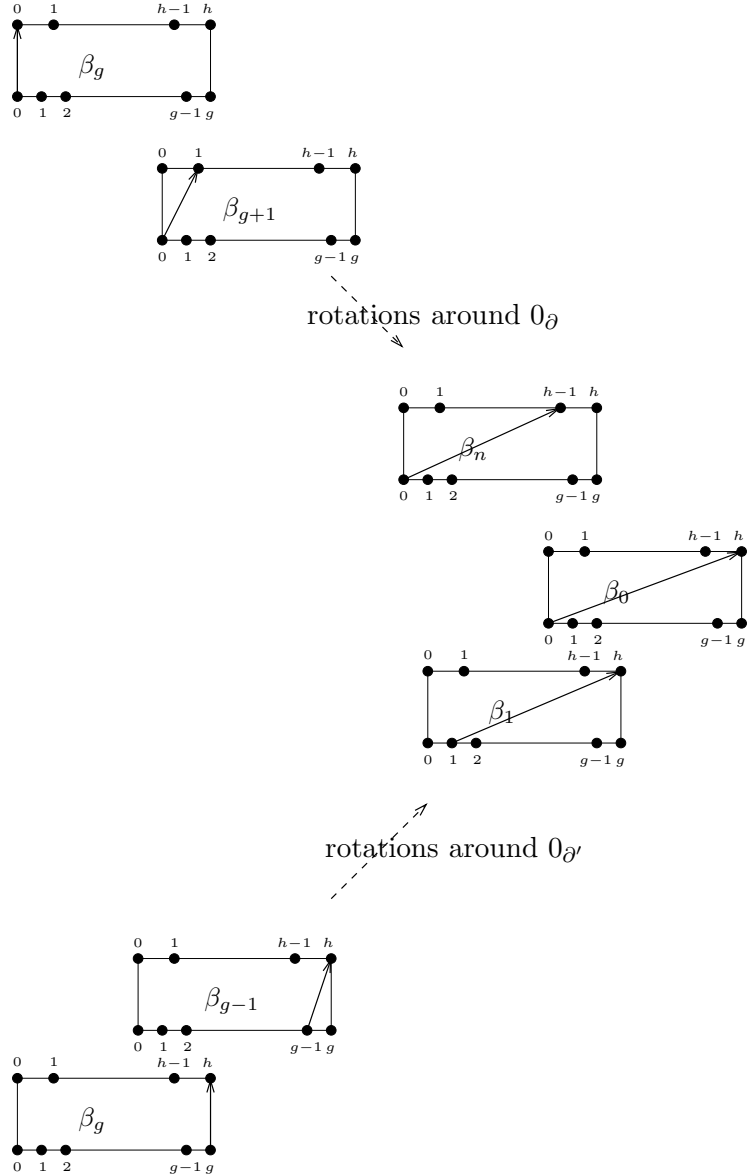}
\caption{The projective arcs}
\label{fig:projective-arcs}
\end{figure}

%%%%%%%%%%%%%%%%%%%%%%%%%%
%
\subsection{Preinjective arcs} \label{sec:arcs-in-I}
%
%%%%%%%%%%%%%%%%%%%%%%%%%%

Among the injective indecomposable modules of $\mo kQ_{g,h}$, $I_0$ is the one-dimensional one, with 
irreducible morphisms from $I_{n}$ and from $I_1$. $I_g$ is the largest one, with irreducible morphisms 
to $I_{g+1}$ and to $I_{g-1}$ (cf. Figure~\ref{fig:AR-3-2} for $g=3$, $h=2$). 
%of Figure~\ref{fig:AR-preprojective}).
All other indecomposable modules in $\Ip$ map to the injectives through compositions of 
irreducible maps. 

We now describe a subset of the bridging arcs from $\partial'$ to $\partial$ 
for the injective indecomposable modules. 
Let 
$$
\gamma_i:=\left\{\begin{array}{ll} 
			\pi([-2_{\partial'}, (i-g+2)_{\partial}]) & i=0,1,\dots, g \\
%			\pi([2_{\partial'}, (i-g-2)_{\partial}]) & i=0,1,\dots, g \\
			\pi([(i-g-2)_{\partial'},2_{\partial}]) & i=g+1,g+2,\dots, n
%			\pi([(i-g+2)_{\partial'},-2_{\partial}]) & i=g+1,g+2,\dots, n
		\end{array}
		\right.
$$
\begin{dfn}
The arcs $\gamma_0,\dots, \gamma_n$ of $P_{g,h}$ are 
called {\em injective arcs}. Figure~\ref{fig:injective-arcs} 
displays the $\gamma_i$. 
We call an arc $\alpha=[x_{\partial'},y_{\partial}]$ of $P_{g,h}$ {\em preinjective} if $\gamma_0$ 
is reached from $\alpha$ through a finite sequence of elementary moves. In particular, the $\gamma_i$ ($i=0,\dots, n$) are 
preinjective arcs. 
\end{dfn}

\begin{figure}
\psfragscanon
\psfrag{g-1}{\tiny$_{g-1}$}
\psfrag{h-1}{\tiny $_{h-1}$}
\psfrag{g}{\tiny$_{g}$}
\psfrag{h}{\tiny $_{h}$}
\psfrag{0}{\tiny$_0$}
\psfrag{1}{\tiny$_1$}
\psfrag{2}{\tiny$_2$}
\small\psfrag{rot2}{rotations around $2_{\partial}$}
\psfrag{roth-2}{rotations around $(h-2)_{\partial'}$}
\psfrag{a}{\small $\gamma_g$}
\psfrag{b}{\small $\gamma_{g+1}$}
\psfrag{c}{\small $\gamma_{n}$}
\psfrag{d}{\small $\gamma_{0}$}
\psfrag{e}{\small $\gamma_{g-1}$}
\psfrag{f}{\small $\gamma_1$}
\includegraphics[scale=.5]{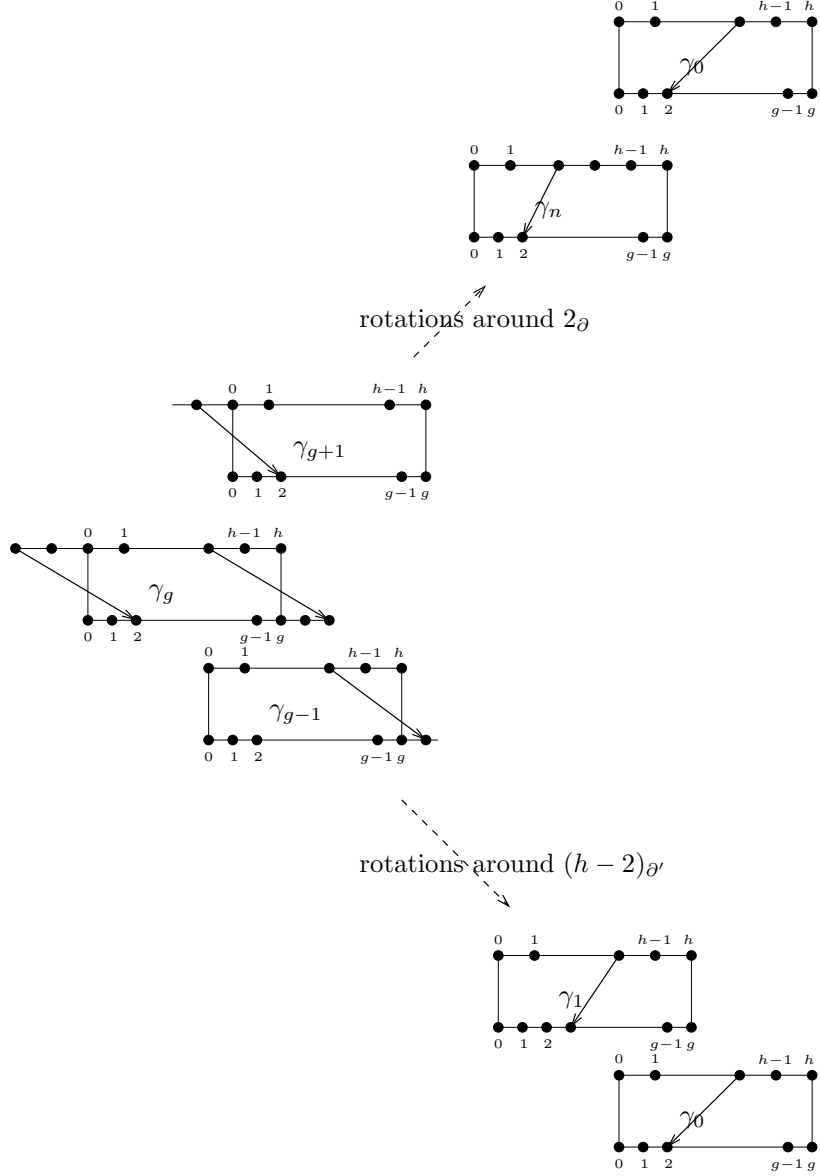}
\caption{The injective arcs}
\label{fig:injective-arcs}
\end{figure}

Up to rotating the inner boundary by $2\frac{2\pi}{h}$, the preinjective arcs are winding positively 
from the inner to the outer boundary towards the terminal object $\gamma_0$. 

\begin{dfn}
An oriented arc of $P_{g,h}$ is called {\em admissible}, if it is either preprojective, peripheral or preinjective.
\end{dfn}

%%%%%%%%%%%%%%%%%%%%%%%%%%
%
\subsection{A translation quiver}\label{sec:transl-quiver-tau}
%
%%%%%%%%%%%%%%%%%%%%%%%%%%

With the definitions from the last sections, we are ready to define a translation quiver on arcs in $P_{g,h}$. 
The vertices of $\Gamma=\Gamma(P_{g,h})$ are the admissible arcs of $P_{g,h}$. 
The arrows of $\Gamma$ are given by the elementary moves. Furthermore, let 
$\tau$ be induced by the map $i_{\partial}\mapsto (i+1)_{\partial}$ and 
$j_{\partial'}\mapsto (i-1)_{\partial'}$. 

In terms of $\U$, $\tau$ shifts endpoints on $\partial$ to the right and endpoints on $\partial'$ 
to the left. Peripheral arcs are thus shifted to the right/to the left, whereas the endpoints 
of bridging arcs of $\Gamma$ move in different directions under $\tau$.

The translation $\tau$ on $\Gamma$ makes $(\Gamma,\tau)$ a translation quiver. This means that 
whenever we have two vertices $\tau(\alpha)$, $\alpha\in \Gamma$, there is a mesh relation involving 
the subgraph on these two vertices and their common neighbours. In our case, these meshes 
have either four or three vertices: 
$$
\xymatrix@R=+0.6pc @C=+0.3pc{
 & \gamma_1\ar[rd]   &  && & \gamma\ar[rd]\\ 
\tau\alpha \ar[ru] \ar[rd]\ar@{..}[rr]  & & \alpha && \tau\alpha \ar[ru] \ar@{..}[rr]  & & \alpha\\
 & \gamma_2\ar[ru]
}
$$ 
The meshes on three vertices only occur at the mouth of tubes formed by 
peripheral arcs. As relations on the quiver $(\Gamma,\tau)$, we impose on 
meshes with four vertices that 
the two paths between $\tau(\alpha)$ and $\alpha$ are equal and on meshes with 
three vertices, that the composition of the two arrows is zero.

\medskip 

The following result is well-known, it appears in \cite{w} for the module category and 
in \cite{bm} for tubes. 

\begin{prop}\label{prop:translationquiver}
$(\Gamma,\tau)$ is a translation quiver, $(\Gamma,\tau)$ is 
isomorphic to the AR-quiver of $\mo kQ_{g,h}$. 
\end{prop}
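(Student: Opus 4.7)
The strategy is to treat each Auslander--Reiten component separately and to paste the resulting isomorphisms together. Much of the content is known from \cite{w} and \cite{bm}, so the plan is really to set up the right bookkeeping and then quote.

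First I would verify that $(\Gamma,\tau)$ is a translation quiver. The map $\tau$ on arcs is defined on endpoints by $i_{\partial}\mapsto (i+1)_{\partial}$ and $j_{\partial'}\mapsto (j-1)_{\partial'}$, so it sends an admissible arc to an admissible arc of the same type. Going case by case through the four lists of elementary moves in Section~\ref{sec:elem-moves}, one checks that the set of elementary moves with target $\alpha$ is in natural bijection with the set of elementary moves with source $\tau(\alpha)$, and that in each case the intermediate arcs involved form a diamond (four distinct common neighbours) or a triangle (a single common neighbour, which happens precisely when $\alpha$ sits at the mouth of one of the two tubes, i.e. is of the form $\pi([i_{\partial},(i+2)_{\partial}])$ or $\pi([j_{\partial'},(j+2)_{\partial'}])$). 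Imposing the mesh relations as stated then makes $(\Gamma,\tau)$ a stable translation quiver away from the mouths of the peripheral tubes, with the expected boundary behaviour.

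Next I would define the isomorphism component by component. On the preprojective side, send the projective arcs $\beta_0,\dots,\beta_n$ to the projective indecomposables $P_0,\dots,P_n$, matching $\beta_g$ (one-dimensional, with the two outgoing elementary moves to $\pi([-g_{\partial},(-1)_{\partial'}])$ and $\pi([(1-g)_{\partial},0_{\partial'}])$) to the simple projective $P_g$. Because every preprojective arc is obtained from $\beta_g$ by a finite sequence of elementary moves, and because the preprojective component of $\mathrm{AR}(\mo kQ_{g,h})$ is freely generated from the $P_i$ subject to the mesh relations, the knitting algorithm extends this to a unique bijection preserving arrows and mesh relations. The same argument, applied in reverse starting from $\gamma_0=I_0$, handles the preinjective component and the arcs $\gamma_i$. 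For the tubes: the $g$ peripheral arcs $\pi([i_{\partial},(i+2)_{\partial}])$ ($0\le i<g$) sit at the mouth of a tube of rank $g$, and knitting upward along the elementary moves (ii) for peripheral arcs at $\partial$ produces a tube which is identified with $\T_g$; analogously for peripheral arcs at $\partial'$ one obtains $\T_h$. The action of $\tau$ on peripheral arcs is exactly the rotation around the appropriate boundary, matching the $\tau$-action on the tubes.

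The final step is to show that the three components glue correctly. Here one checks that there is no elementary move between arcs of different types: a preprojective arc ends at $\partial'$ while the only moves from it produce again arcs ending at $\partial'$, never an arc starting at $\partial'$; similarly for the tubes (peripheral arcs stay peripheral under elementary moves) and for preinjective arcs. Hence $\Gamma$ decomposes as a disjoint union of the preprojective component, the two tubes, and the preinjective component, each isomorphic to the corresponding component of $\mathrm{AR}(\mo kQ_{g,h})$, and the proposition follows. The main obstacle, and essentially the only genuine work, is the verification at the mouths of the tubes: one must match the quasi-simple module $M_1$ with a specific peripheral arc of length two and then check that the $g$-fold (respectively $h$-fold) rotation of $\tau$ around the inner/outer boundary reproduces the periodicity of the tube, including the triangle mesh relations at the mouth. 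The rest is routine once the bijection on the mouth is fixed.
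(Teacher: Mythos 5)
Your proposal is correct and takes essentially the same route as the paper, which itself offers no argument beyond citing \cite{w} for the module category and \cite{bm} for the tubes; your component-by-component knitting from $\beta_g$, $\gamma_0$ and the length-two peripheral arcs, together with the observation that elementary moves never change the type of an arc, is exactly the standard verification underlying those references. Two small slips to fix: the two elementary moves out of $\beta_g=\pi([0_{\partial},0_{\partial'}])$ land on $\pi([-1_{\partial},0_{\partial'}])=\beta_{g-1}$ and $\pi([0_{\partial},1_{\partial'}])=\beta_{g+1}$, not on the arcs you name, and (as in the paper's own statement) your final decomposition silently omits the homogeneous tubes, so the isomorphism is really onto the AR-quiver minus the components $\T^{\lambda}$.
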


By the above result, $(\Gamma,\tau)$ describes $\mo kQ_{g,h}$ up 
to maps in the infinite radical. 

\begin{rem} 
(1) Observe that by definition, $\tau(\beta_i)$, $0\le i\le n$ 
is not admissible and neither are the $\tau^{-1}(\gamma_i)$. 

(2) However, if we forget the orientations of arcs, there is a way 
to go from $\beta_i$ to $\gamma_i$ via $\tau^2$ or back 
via $\tau^{-2}$ as follows: Let $0\le i\le n$. Then $\gamma_i$ can be obtained 
from $\beta_i$ by applying $\tau^2$ to the arc (the result is not 
admissible, but bridging) and reversing the orientation 
of the image. I.e. if $\beta_i$ has lift $[a_{\partial},b_{\partial'}]$ in $\U$, 
then the arc $[\tau^2(b_{\partial'}),\tau^2(a_{\partial}]$ is a lift of 
$\gamma_i$. 
\end{rem}

%(ETC ETC, refer to relevant articles) \\

%%%%%%%%%%%%%%%%%%%%%%%%%%

\section{The infinite radical}\label{sec:inf-rad}

%%%%%%%%%%%%%%%%%%%%%%%%%%

In this section, we describe the elements of the infinite radical of $\mo kQ_{g,h}$. We will do this using 
the geometric 
interpretation of objects in terms of admissible arcs. With this, we will obtain a quiver 
$\GG=\GG(P_{g,h})$ from $\Gamma=\Gamma(P_{g,h})$. 
The vertices of $\GG$ are the vertices of $\Gamma$, the arrows of 
$\GG$ are the arrows of $\Gamma$ together with the arrows arising from elements 
of the infinite radical (corresponding to the ``long moves'', see below).

Elementary moves are combinatorial models for irreducible morphisms, i.e. elements 
of $\Rad(X,Y)/\Rad^2(X,Y)$ for $X,Y$ the two indecomposable objects corresponding 
to the starting and end point of $\alpha$. We have seen that 
an elementary move sends an arc $\alpha$ to 
another arc $\alpha'$ with a common endpoint by rotating the arc 
$\alpha$ clockwise around the common endpoint. 
The other endpoint is moved by $\pm 1$ along the corresponding boundary. 

Long moves are models for maps in the infinite radical of $\mo\tilde{A}$, 
i.e. for elements of 
$\Rad^{\infty}(\mo \tilde{A})=\cap_i \Rad^i(\mo \tilde{A})$. 
In terms of the AR-quiver, maps in the infinite radical arise from infinitely many 
compositions of irreducible morphisms, keeping the direction fixed, 
i.e. moving along a ray or coray in the quiver $\Gamma_{g,h}$. 

%%%%%%%%%%%%%%%%%%%%%%%%%%
%
\subsection{Long moves}\label{sec:long-moves}
%
%%%%%%%%%%%%%%%%%%%%%%%%%%
%

Long moves are viewed as limits of sequences of elementary moves, fixing 
one common vertex for all the elementary moves involved. 
This way, long moves follow a line in the quiver $\Gamma(P_{g,h})$. 

Geometrically, this is achieved by rotating an arc $\alpha$ clockwise to an arc $\alpha'$ 
which has one endpoint in common with $\alpha$ and where the other endpoint has 
changed boundary components. 
The endpoint switching boundary components 
is allowed to go to any marked point on the new boundary, as long as the image is an admissible arc 
(preprojective, peripheral or preinjective). 

In particular, there are infinitely many 
long moves from $\alpha$. This is now described case-by-case. 
We will indicate the long moves by drawing dashed arrows. 
Let $g\ge h>0$ be fixed, let $\pi=\pi_{g,h}:\U\to P_{g,h}$ be the covering map. 
\begin{itemize}
\item Bridging arcs: Let  $\alpha$ be a preprojective arc,  
let $[i_{\partial},y_{\partial'}]$ be a lift of $\alpha$. %its canonical lift, $0\le i<g$. 
	\begin{itemize}
	\item There are long moves from $\alpha$ to every vertex of the coray
	of $\pi([i_{\partial}, i+2_{\partial}])$ in the tube $\T_g$, i.e. % $\mathcal{T}_{\partial}$, i.e. 
	to every arc $\pi([i_{\partial},(i+k)_{\partial}])$ for $k\ge 2$. 
		\begin{equation}\label{bridging1}
		\xymatrix@R=3pc@C=3pc@!0{
			\ddots &&\ar[dr]\\
			\alpha=\pi([i_{\partial},y_{\partial'}])\ar@{-->}[rru]\ar@{-->}[rrr]\ar@{-->}[rrrrd]\ar@{-->}[rrrrrdd]& 
			&&\pi([i_{\partial},(i+4)_{\partial}])\ar[dr]\\
			&&&&\pi([i_{\partial},(i+3)_{\partial}])\ar[dr]\\
			&&&&&\pi([i_{\partial},(i+2)_{\partial}])
		}\end{equation} 
		Geometrically this means that from every preprojective arc starting at $i_{\partial}$, 
		there are long moves 
		to every peripheral arc starting at $i_{\partial}$.
	\item There are long moves from $\alpha$ to every vertex of the coray of 
	$\pi([(y-2)_{\partial'},y_{\partial'}]$ in the tube $\mathcal{T}_h$, i.e. to 
	every arc $[(y-k)_{\partial'},y_{\partial'}]$ with $k\ge 2$. (Recall that the quiver of 
	$\T_h$ is drawn upside down, the vertex $\pi([(y-2)_{\partial'},y_{\partial'}])$ sits at the mouth 
	of this component). 
		\begin{equation}\label{bridging2}
		    \xymatrix@R=3pc@C=3pc@!0{
			&&&&&\pi([(y-2)_{\partial'},y_{\partial'}]) \\
			&&&&\pi([(y-3)_{\partial'},y_{\partial'}])\ar[ur]\\
			\alpha=\pi([i_{\partial},y_{\partial'}])\ar@{-->}[rrd]\ar@{-->}[rrr]\ar@{-->}[rrrru]\ar@{-->}[rrrrruu]& 
			&&\pi([(y-4)_{\partial'},y_{\partial'}])\ar[ur]\\
			\iddots &&\ar[ur] 
		}\end{equation} 
		Geometrically this means that from every preprojective arc ending at 
		$y_{\partial'}$, there are long moves 
		to every peripheral arc ending at $y_{\partial}$.
	\end{itemize}
\item Peripheral arcs:  
        \begin{itemize}
        \item 
        Let $\alpha$ be peripheral and based at $\partial$, let $[a_{\partial},y_{\partial}]$ be 
        a lift of $\alpha$. Then we define a long move to every preinjective  
        arc $\pi([x_{\partial'},y_{\partial}])$ of $P_{g,h}$. 
		\begin{equation}\label{peripheral1}\xymatrix@R=3pc@C=3pc@!0{
			&&&&&& \gamma \\
			&&&&& \rotatebox{10}{$\ \iddots$}\ar[ur]\\
			& & & \rotatebox{15}{$\iddots$}  &\pi([x_{\partial'},y_{\partial}])\ar[ru]\\
			\alpha=\pi([a_{\partial},y_{\partial}])\ar@{-->}[rdd]\ar@{-->}[rrd]\ar@{-->}[rrr]
			  \ar@{-->}[rrrru]\ar@{-->}[rrrrrruuu]&&&\pi([(x-1)_{\partial'},y_{\partial}]\ar[ru])\\
			&&\pi([(x-2)_{\partial'},y_{\partial}])\ar[ru]\\
			\iddots &\ar[ru]
		}\end{equation}%\label{peripheral1}
        Note that the terminal object of this sequence is one of the injective 
        arcs $\gamma_0,\dots, \gamma_{g-1}$. More precisely, 
        if we assume that the lift of $\alpha$ 
        satisfies $-2\le y<g-2$, then the terminal object is 
        $\pi[2_{\partial'},y_{\partial}]=\gamma_0$ if $y=-2$ and 
        $\gamma=\gamma_{y+2}$ otherwise. 
        Geometrically the description above means that from every peripheral arc ending at $y_{\partial}$, 
        there are long moves 
        to every admissible bridging arc ending at $y_{\partial}$. 
        \item 
        Let $\alpha$ be a peripheral arc based at $\partial'$ and let $[x_{\partial'}, b_{\partial'}]$ 
        a lift of $\alpha$. Then we define a long move from $\alpha$ to every arc 
        $\pi([x_{\partial'},z_{\partial}])$. 
		\begin{equation}\label{peripheral2}\xymatrix@R=3pc@C=3pc@!0{
			\ddots &\ar[dr]\\
			&&\pi([x_{\partial'},(z+2)_{\partial}])\ar[dr]\\
			\alpha=\pi([x_{\partial'}, b_{\partial'}])\ar@{-->}[uur]\ar@{-->}[urr]\ar@{-->}[rrr]
			\ar@{-->}[drrrr]\ar@{-->}[dddrrrrrr]&&&\pi([x_{\partial'},(z+1)_{\partial}])\ar[dr]\\
			&&&  \rotatebox{345}{$\ddots$} &\pi([x_{\partial'},z_{\partial}])\ar[dr]\\
			&&&&& \rotatebox{350}{$\ \ddots$} \ar[rd] \\
			&&&&& & \gamma
		}\end{equation} 
	Again, the terminal object of this sequence is an injective arc, it is $\gamma_0$ or 
	one of the arcs $\gamma_{g+1},\dots, \gamma_n$. If we assume that the lift of $\alpha$ 
        satisfies $0\le x<-h-1$, then this injective arc is $\pi[x_{\partial'},-2]=\gamma_0$ if $x=2$ and 
        $\gamma=\gamma_{x+g-2}$ otherwise. 

	Geometrically the above description means that from every peripheral arc starting at 
	$x_{\partial'}$, there are long moves 
	to every admissible bridging arc starting at $x_{\partial'}$.
        \end{itemize}
\end{itemize}

\begin{rem}
Note that unlike $\Gamma$, the quiver $\GG$ is connected. 
\end{rem}

%%%%%%%%%%%%%%%%%%%%%%%%%%%%%%%%%%%
\subsection{Preimages under long moves} \label{ssec:preimages}
%%%%%%%%%%%%%%%%%%%%%%%%%%%%%%%%%%%

Now we make a few observations which follow from the definitions of the long moves above. 
We will need this when we define relations in the Section~\ref{sec:relations}. 

Consider a peripheral arc $\pi([i_{\partial},(i+k)_{\partial}])$ with $k\ge 2$. Then for every $y_{\partial'}$ 
such that $\pi([i_{\partial}, y_{\partial'}])$ is a preprojective 
arc there exists a long move 
$\pi([i_{\partial}, y_{\partial'}])\to\pi([i_{\partial},(i+k)_{\partial}])$. Combining this with elementary moves 
between preprojective arcs starting at $i_{\partial}$, we have the following 
picture: 
\begin{equation}\label{cons1}
\xymatrix@R=3pc@C=3pc@!0{
\ar@{..>}[rd]\\
&\pi([i_{\partial}, (j-2)_{\partial'}])\ar[rd]\ar@{-->}[rrrrrdd]\\
&&\pi([i_{\partial}, (j-1)_{\partial'}])\ar[rd]\ar@{-->}[rrrrd]\\
&&&\pi([i_{\partial}, j_{\partial'}])\ar[rd]\ar@{-->}[rrr]&&& \pi([i_{\partial}, (i+k)_{\partial}])\\
&&&&\pi([i_{\partial}, (j+1)_{\partial'}])\ar@{-->}[rru]\ar@{..>}[rd]\\
&&&&&
}\end{equation} 
Similarly, if we consider long moves from $\Pp$ to the tube of peripheral arcs at the inner boundary: 
Let $\pi([(y-k)_{\partial'},y_{\partial'}])$ be a peripheral arc, $k\ge 2$. Then there is a long move 
$\pi([j_{\partial}, y_{\partial'}])\rightarrow \pi((y-k)_{\partial'},y_{\partial'}])$ for every preprojective 
arc  of the form $\pi([j_{\partial}, y_{\partial'}])$. Combining this with elementary moves between 
preprojective arcs ending at $y_{\partial'}$, we get 
\begin{equation}\label{cons2}
\xymatrix@R=3pc@C=3pc@!0{
&&&&&\\
&&&&\pi([(j-2)_{\partial}, y_{\partial'}])\ar@{..>}[ru]\ar@{-->}[rrdd]\\
&&&\pi([(j-1)_{\partial}, y_{\partial'}])\ar[ru]\ar@{-->}[rrrd]\\
&&\pi([j_{\partial}, y_{\partial'}])\ar[ru]\ar@{-->}[rrrr]&&&& \pi([(y-k)_{\partial'}, y_{\partial'}])\\
&\pi([(j+1)_{\partial}, y_{\partial'}])\ar[ru]\ar@{-->}[rrrrru]\\
\ar@{..>}[ru]
}\end{equation}
Now we consider long moves from the tubes to $\Ip$. Let  $\pi([x_{\partial'},y_{\partial}])$
be an admissible bridging arc in $\Ip$. Then there is a long move 
$\pi([(j-k)_{\partial}, y_{\partial}]) \rightarrow \pi([x_{\partial'},y_{\partial}])$ for every
$k\geq 2$. In other words, there are long moves from every arc in the ray of $\pi([(j-2)_{\partial}, y_{\partial}])$
to $\pi([x_{\partial'},y_{\partial}])$.
\begin{equation}\label{cons3}
\xymatrix@R=3pc@C=3pc@!0{
&&&&\\
&&&\pi([(j-4)_{\partial}, y_{\partial}])\ar@{-->}[rrrd]\ar@{..>}[ru]\\
&&\pi([(j-3)_{\partial}, y_{\partial}])\ar[ru]\ar@{-->}[rrrr]&&&& \pi([x_{\partial'},y_{\partial}])\\
&\pi([(j-2)_{\partial}, y_{\partial}])\ar[ru]\ar@{-->}[rrrrru]\\
\ar@{..>}[ru]
}\end{equation}
Similarly we get long moves from every arc in the ray of $\pi([y_{\partial'}, (y+2)_{\partial'}])$
to $\pi([x_{\partial'},y_{\partial})]$. 
\begin{equation}\label{cons4}
\xymatrix@R=3pc@C=3pc@!0{
&\pi([y_{\partial'}, (y+2)_{\partial'}])\ar[rd]\ar@{-->}[rrrrrd]\\
&&\pi([y_{\partial'}, (y+3)_{\partial'}])\ar[rd]\ar@{-->}[rrrr]&&&& \pi([x_{\partial'},y_{\partial}])\\
&&&\pi([y_{\partial'}, (y+4)_{\partial'}])\ar@{-->}[rrru]\ar@{..>}[rd]\\
&&&&
}\end{equation}
\begin{rem}
The connected quiver $\GG$ 
is {\bf not} locally finite. 
\end{rem}

%%%%%%%%%%%%%%%%%%%%%%%%%%%%%%%%%%%
\subsection{Mesh relations in $\GG$} 
%%%%%%%%%%%%%%%%%%%%%%%%%%%%%%%%%%%

In what follows, when we 
refer to connected components, we will usually refer to the 
connected components $\Pp$, $\Ip$, $\T_g$, $\T_h$ of $\Gamma$.

For $\GG$, we keep the mesh relations involving the arrows of $\Gamma$. Within $\GG$, 
there are new diamonds of arrows arising, formed by moves of different kinds. 
If such a diamond involves vertices from two connected components of $\Gamma$, 
we define a new relation from it. 
We do not impose any relations on diamonds involving four long moves. 

The diagrams giving rise to new relations are of the form 
$$
\xymatrix@R=+0.6pc @C=+0.3pc{
 & Y_1\ar[rd]^{g_1} & \\ 
X\ar[ru]^{f_1}\ar[rd]_{f_2} & & Z \\
 & Y_2\ar[ru]_{g_2}
}
$$ 
where one pair of opposite arrows come from elementary moves 
(i.e. $f_1$ and $g_2$ or $g_1$ and $f_2$) 
and the two other arrows from 
long moves.

%%%%%%%%%%%%%%%%%%%%%%%%%%%%%%%%%%%
\subsection{Diamonds from preprojective arcs to peripheral arcs}\label{ssec:preproj-tube}
%%%%%%%%%%%%%%%%%%%%%%%%%%%%%%%%%%%

Recall that elementary moves in $\Pp$ are either of the form $\pi([i_{\partial}, y_{\partial'}]) \rightarrow \pi([(i-1)_{\partial}, y_{\partial'}])$ or
$\pi([i_{\partial}, y_{\partial'}]) \rightarrow \pi([i_{\partial}, (y+1)_{\partial'}])$. 
As before, we write dashed arrows to indicate long moves. 
Combining long moves with elementary moves, %(\ref{bridging1}) 
there exists diamonds
$$
\xymatrix@R=+0.6pc @C=+2pc{
 &  \pi([(i-1)_{\partial}, y_{\partial'}])\ar@{-->}[rd] & \\ 
\pi([i_{\partial}, y_{\partial'}])\ar[ru]\ar@{-->}[rd] & & \pi([(i-1)_{\partial}, (i+k)_{\partial}]) \\
 & \pi([i_{\partial}, (i+k)_{\partial}])\ar[ru]}
$$
for all $k \geq 2$. The vertices to the left and above belong to $\Pp$, the other two vertices 
are peripheral arcs based at the outer boundary. 

And %by (\ref{bridging2}), there are 
diamonds
$$
\xymatrix@R=+0.6pc @C=+2pc{
 &  \pi([(y-k)_{\partial'}, y_{\partial'}])\ar[rd] & \\ 
\pi([i_{\partial}, y_{\partial'}])\ar@{-->}[ru]\ar[rd] & & \pi([(y-k)_{\partial'}, (y+1)_{\partial'}]) \\
 & \pi([i_{\partial}, (y+1)_{\partial'}])\ar@{-->}[ru]
 }
$$
for all $k\geq 2$. Here the left vertex and the one below are preprojective, the other two peripheral 
and based at the inner boundary. 

In $\GG$ we require that all such diamonds commute. The geometric descriptions of these 
relations is in Cases A) and B) in Section~\ref{ssec:diamonds-long}. 

%These two diamonds are maps from the preprojective arcs to the two tubes.

%%%%%%%%%%%%%%%%%%%%%%%%%%%%%%%%%%%
\subsection{Diamonds from peripheral arcs to preinjective arcs}\label{ssec:tube-preinj}
%%%%%%%%%%%%%%%%%%%%%%%%%%%%%%%%%%%

We also have diamonds involving peripheral and preinjective arcs. 
$$
\xymatrix@R=+0.6pc @C=+2pc{
 & \pi([y_{\partial'}, i_{\partial}])\ar[rd]\\ 
 \pi([(i-k)_{\partial}, i_{\partial}])\ar[rd]\ar@{-->}[ru] & & \pi([y_{\partial'}, (i-1)_{\partial}]) \\
 &  \pi([(i-k)_{\partial}, (i-1)_{\partial}])\ar@{-->}[ru] & 
} 
$$
for all $k \geq 3$. The first vertex and the one below are peripheral arcs of $\partial$, 
two are preinjective arcs. 

Also, we have diamonds %by (\ref{bridging2}), there are diamonds
$$
\xymatrix@R=+0.6pc @C=+2pc{
 & \pi([(i+1)_{\partial'}, (i+k)_{\partial'}])\ar@{-->}[rd] \\ 
\pi([i_{\partial'}, (i+k)_{\partial'}])\ar[ru]\ar@{-->}[rd] & & \pi([(i+1)_{\partial'}, y_{\partial}]) \\
 &  \pi([i_{\partial'}, y_{\partial}])\ar[ru] &
}
$$
for $k\ge 3$. The first vertex and the one on top are peripheral arcs based at $\partial'$, 
the other two are preinjective arcs. 

In $\GG$ we require that all such diamonds commute. The geometric descriptions of these 
relations is in Cases C) and D) in Section~\ref{ssec:diamonds-long}.

%%%%%%%%%%%%%%%%%%%%%%%%%%
%
\subsection{Relations in $\GG$}\label{sec:relations-GG}
%
%%%%%%%%%%%%%%%%%%%%%%%%%%

Recall that $\tau$ is defined as moving endpoints on $\partial$ by $+1$ and endpoints on $\partial'$ by $-1$, 
inducing meshes and thus relations on $\Gamma$. There are additional relations we need to impose 
on $\GG$. In section \ref{sec:long-moves} we defined long moves (cf. (1)-(4)) and and 
then in Subsection~\ref{ssec:preimages} we described 
diagrams coming from the long moves (see (5)-(8)). 
In these diagrams we 
that all triangles commute.

From these relations we get the following facts.
\begin{itemize}
\item From (\ref{bridging1}) and (\ref{bridging2}) we get that any long map from $\Pp$ to a tube factors through infinitely many
arcs in that tube. In particular, from (1), any map of the form $\pi([i_{\partial}, y_{\partial'}]) 
\rightarrow \pi([i_{\partial},(i+k)_{\partial}])$, with $k\geq 2$, factors through all arcs 
$\pi([i_{\partial},j_{\partial}])$ where $j>i+k$.
Similarly, from (2), long maps from $\Pp$ to $\mathcal{T}_h$ factors through infinitely many arcs
in $\mathcal{T}_h$.
\item From (\ref{peripheral1}) and (\ref{peripheral2}) we get that any long map from a tube to $\Ip$ factors through infinitely many
preinjective arcs. In particular, from (\ref{peripheral1}), any map of the form $\pi([i_{\partial}, y_{\partial}])
\rightarrow \pi([x_{\partial'}, y_{\partial}])$ factors through all arcs $\pi([(x-k)_{\partial'}, y_{\partial}])$
where $k>1$. Similarly, from (\ref{peripheral2}), with long maps from $\mathcal{T}_h$ to $\Ip$.
\item From (\ref{cons1}) and (\ref{cons2}) 
any long map from $\Pp$ to a tube factors through infinitely many preprojective arcs.
\item From (\ref{cons3}) and (\ref{cons4}) we have that any long map from a tube to $\Ip$ factors through infinitely many
arcs in that tube. 
\end{itemize}

In addition, we have relations coming from the diamonds involving two different kind of 
arcs, see Subsections~\ref{ssec:preproj-tube} and \ref{ssec:tube-preinj}.

%%%%%%%%%%%%%%% NEW SECTION %%%%%%%%%%%%%%%%%%%%%
\section{An isomorphism of AR-quivers}\label{sec:arquivers}

%%%%%%%%%%%%%%%%%%%%%%%%%%%%%%%%%%%%

%%%%%%%%%%%%%%%%%%%%%%%%%
\subsection{Definition of $\GG_m$}\label{ssec:defn-GGm}

%%%%%%%%%%%%%%%%%%%%%%%%%

We now want to define a subquiver $\GG_m$ of $\GG$ and prove that it 
is isomorphic to the AR-quiver $Q_m$ defined by Br\"ustle in \cite{br}, 
except from the homogeneous tubes. See Section \ref{sec:bruestle}.

By Proposition \ref{prop:translationquiver}, the quiver $(\Gamma,\tau)$ is isomorphic to 
the AR-quiver of $\mod kQ_{g,h}$. Let $\GG^P$, $\GG^I$, $\GG^0$ and $\GG^\infty$ be the 
subquivers of $\GG$
consisting of the preprojective arcs, preinjective arcs, 
the arcs in the tube of rank $g$ and the tube of rank $h$ respectively.
Then we have that $\GG^P$ is isomorphic to $Q^P$, $\GG^I$ is isomorphic to 
$Q^I$, $\GG^0$ is isomorphic to $Q^0$ and
$\GG^\infty$ is isomorphic to $Q^\infty$.

Following \cite{br}, we define a subquiver $\GG^P_m$ of $\GG^P$ which is isomorphic to 
$Q^P_m$. We take the arcs
in $\GG^P_m$ to be all arcs that can be reached from a projective arc by applying 
$\tau^{-1}$ $ghm$ times, for some integer $m\geq 0$. In other words, 
the arcs in $\GG^P_m$ are all arcs of the form $\tau^{-r}P$,  with $P$ a projective 
arc and $0 \leq r \leq ghm$. It follows
directly from the definition of $Q^P_m$ that $\GG^P_m$ is isomorphic to $Q^P_m$. 
Similarly we define $\GG^I_m$ to be all arcs
in $\GG^I$ of the form $\tau^r I$, with $I$ an injective arc and $0 \leq r \leq ghm$. 
Clearly $\GG^I_m$ is isomorphic to $Q^I_m$.

When cut open to lie in the plane, the components $\GG_m^0$ and $\GG_m^{\infty}$ look like rectangles 
with an equilateral triangle on top. We start from this top vertex to describe $\GG_m^0$. 
We take the peripheral arcs at $\partial$, the longest one we want in $\GG_m^0$ 
is the arc $\pi[0_{\partial},(gm+g+2)_{\partial}]$. 
We take $g-1$ predecessors along the ray of this top vertex, namely 
$\pi[0_{\partial},(gm+g+2-i)_{\partial}]$ for $i=1,\dots, g-1$. Then $\GG_m^0$ consists of the 
full subgraph on the vertices in the corays from the mouth up to these $g$ vertices. 

For $\GG_m^{\infty}$, we start with 
$\pi[-2_{\partial'},(hm+h)_{\partial'}]$ and take $h-1$ predecessors of this vertex along its 
ray, $\pi[-2_{\partial'},(hm+h-i)_{\partial'}]$ for $i=1,\dots, h-1$. Then 
$\GG_m^{\infty}$ consists of the full subgraph on the vertices in the 
corays from the mouth down to these $h$ vertices.

\begin{dfn}
We define $\GG_m$ to be the full subquiver of $\GG$ consisting of the objects in $\GG^P_m$, 
$\GG^I_m$, $\GG^0_{2m(n+1)}$ and $\GG^\infty_{2m(n+1)}m$. 
\end{dfn}

Let $Q_m'$ be the full subquiver of $Q_m$ on all vertices apart from the homogeneous tubes. 
It is clear that the quivers $\GG_m$ and $Q_m'$ have the same vertices. Within the components, 
they also have the same arrows. Since we have arrows for long 
moves in $\GG_m$, the quiver $\GG_m$ has many more arrows than $Q_m'$, the latter only has 
very few arrows linking the components,namely the $\iota_0(i)$ and $\kappa_0(i)$ with $0\le i\le g$, 
the $\iota_{\infty}(i)$ and the $\kappa_{\infty}(i)$ for $i=g,g+1,\dots, n$. 
The translation maps on both quivers work the same, so we will denote both by $\tau$. 

To be able to go between $\GG_m$ and $Q_m'$, we need to know exactly which vertex 
of $\GG_m$ corresponds to which vertex of $Q_m'$. We do this now by giving a function 
$F:\GG_m \rightarrow Q_m'$

Recall from Section \ref{sec:inf-rad} the bridging arcs that play the role of
projective and injective indecomposable modules. 
This gives the function from the projective arcs to the projective objects in $Q_m$, i.e.
$F(\beta_i)=F(\pi[(i-g)_\partial, 0_{\partial'}])=(0,i)_P$ for $i=0,1, ..., g-1, g$ and 
$F(\beta_i)=F(\pi[0_{\partial}, (i-g)_{\partial'}])=(0,i)_P$ for 
$i=g+1, g+2, ..., n$.\\
This describes how $F$ works on $\GG_m^P$ since every preprojective arc 
is in the $\tau$-orbit of a projective arc, as every vertex of $Q_m^P$ belongs to the $\tau$-orbit 
of some projective $(0,j)_P$. 
E.g. if $0\le i\le g$, we have 
\begin{align} 
F(\tau^j \beta_i)  = & F(\tau^j \pi[(i-g)_\partial, 0_{\partial'}])= 
F(\pi[(i-g+j)_\partial, (-j)_{\partial'}])\label{eq:tau-orbit-beta} \\ 
 = & \tau^j (0,i)_P=(-j,i)_P \notag
\end{align} 
for $-ghm \leq j \leq 0$
%
%
%\medskip 
%
%For $i=g+1, g+2, ..., n$ we define 
%%
%\begin{eqnarray*} 
%F(\tau^j \beta_i) & = & F(\tau^j \pi[0_\partial, (i-g)_{\partial'}])=F(\pi[(j)_\partial, (i-g-j)_{\partial'}])\\ 
% & =\tau^j (0,i)_P &=  (-j,i)_P
%\end{eqnarray*}
%for $-ghm \leq j \leq 0$.\\

Now we consider the preinjective component. 

We define $F(\gamma_i)=F(\pi[-2_{\partial'}, (i-g+2)_\partial])=(0,i)_I$ for $i=0,1, ...,g$ and 
$F(\gamma_i)=F(\pi[(i-g-2)_{\partial'},2_\partial])=(0,i)_I$ for $i=g+1, g+2, ..., n$. 

As above, we can apply $\tau$ to get the other objects in the preinjective component. 
%We define, for $i=0,1, ..., g-1$ and $g$, 
%%
%\begin{eqnarray*} 
%F(\tau^j \gamma_i) & = & F(\tau^j\pi[-2_{\partial'},(i-g+2)_\partial])=F(\pi[(-2-j)_{\partial'},(i-g+2+j)_\partial]) \\ 
%  & =  & \tau^j(0,i)_I=(j,i)_I
%\end{eqnarray*}
%for $0\geq j \geq ghm$.
%
%\medskip 
%
%And for $i=g+1, g+2, ..., n$ we define
%\begin{eqnarray*}
%F(\tau^j \gamma_i) & = & F(\tau^j\pi[(i-g-2)_{\partial'},+2_\partial])=F(\pi[(i-g-2-j)_{\partial'},(2+j)_\partial]) \\
%  & = & \tau^j(0,i)_I=(j,i)_I
%\end{eqnarray*}
%for $0\geq j \geq ghm$. 

For the tube $\GG^0_{2m(n+1)}$ we only describe the function on the arcs at the top in the tube, 
because we only need those for the proof. Note that
we can easily describe the function on all objects by considering the rays of the arcs at the top.
The arc at the top is $\pi([0_\partial, (2m(n+1)+g+2)_{\partial}])$, so we set 
$F(\pi([0_\partial, (2gm(n+1)+g+2)_{\partial}]))=(2gm(n+1)+g,g)_g$. 
Then $F(\pi([k_\partial, (2gm(n+1)+g+2)_{\partial}])) = (2gm(n+1)+g-k,g)_g$ for $0 \leq k \leq g$. Also, we have
$F(\pi([0_\partial, (2gm(n+1)+g+2-k)_{\partial}])) = (2gm(n+1)+g-k,g-k)_g$, $0\leq k\leq g$.

The arc at the bottom of $\GG^{\infty}_{2m(n+1)}$ is 
$\pi [-2_{\partial'}, (2m(n+1)+h)_{\partial'}]$ and from this, one can describe the map $F$ 
similarly as for $\GG^0_{2m(n+1)}$. 

\begin{rem}
Observe that in the quiver 
$\GG_m$, 
the relations (c1),(c2), (e), (f) of Theorem~\ref{thm:hauptsatz} hold. 

To see that relations (c1) and (c2) hold, we use similar arguments as in the proof of Theorem 
\ref{thm:iso-quivers}, using the relations from Subsection~\ref{sec:relations-GG}. 

To see that (f) holds in $\GG_m$, one looks at the corresponding compositions into 
elementary moves, one long move and elementary moves 
and observes that both paths have the same effect on 
the arc corresponding to $(ghm,0)_P$. 
The proof of (e) then uses (f) with $j=0$ or $2m(n+1)+1$ and the fact, that these 
compositions will pass through two successive vertices at the boundary of the 
corresponding tube, hence the zero relation. 
A detailed geometric interpretation of these four relations is given in the appendix. 
\end{rem}

%%%%%%%%%%%%%%%%%%%%%%%%%
\subsection{Isomorphism}\label{ssec:isomorphism}

%%%%%%%%%%%%%%%%%%%%%%%%%

Recall that $Q_m'$ is the full subquiver of $Q_m$ on the vertices not belonging to the 
homogeneous tubes. 
Let $R$ denote the relations (c1),(c2),(e) and (f) on $Q_m'$, write $Q_m'/R$ for the 
quiver $Q_m'$ up to the relations in $R$. Then we can formulate our result. 

\begin{thm}\label{thm:iso-quivers}
Let $Q'_m$ be the full subquiver of $Q_m$ consisting of all objects not in a homogeneous tube. 
Then the quivers $\GG_m$ and $Q'_m/R$ are isomorphic.
\end{thm}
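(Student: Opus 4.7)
The plan is to upgrade the vertex bijection $F$ of Section~\ref{ssec:defn-GGm} to an isomorphism of the associated $k$-linear categories. At the level of vertices $F$ is essentially already in place: combining the component isomorphisms $\GG^P_m \cong Q^P_m$, $\GG^I_m \cong Q^I_m$, $\GG^0_{2m(n+1)} \cong Q^0_{2m(n+1)}$, $\GG^\infty_{2m(n+1)} \cong Q^\infty_{2m(n+1)}$ (each a direct consequence of Proposition~\ref{prop:translationquiver} restricted to the relevant piece), and noting that both $\GG_m$ and $Q'_m$ are the disjoint union of these four components as sets of vertices, $F$ is a bijection.

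Next I would extend $F$ to arrows. Within each component, the component isomorphisms send elementary moves to the arrows of $Q^P_m$, $Q^I_m$, $Q^0_{2m(n+1)}$, $Q^\infty_{2m(n+1)}$ in an obvious way, and the mesh relations on the geometric side match relations (a) and (b) on the algebraic side. For the connecting arrows of $Q'_m$, I would specify $F^{-1}$ by sending $\iota_0(0)$, $\kappa_0(0)$, $\iota_\infty(0)$, $\kappa_\infty(0)$ to the \emph{basic} long moves that link the distinguished projective and injective arcs directly to the quasi-simple arcs $\pi([0_\partial, 2_\partial])$ and $\pi([-2_{\partial'}, 0_{\partial'}])$ at the mouths of $\T_g$ and $\T_h$. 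Once these four choices are pinned down, the remaining connecting arrows $\iota_0(i), \kappa_0(i)$ for $1 \le i \le g$ and $\iota_\infty(i), \kappa_\infty(i)$ for $g+1 \le i \le n$ are forced by relations (c1) and (c2) in $R$.

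The crux of the argument is to match the long moves of $\GG_m$ with paths in $Q'_m$ through the chosen basic connecting arrows, and conversely. Given an arbitrary long move of $\GG_m$ as in diagrams (\ref{bridging1})--(\ref{peripheral2}), I would use the diamond commutativity relations of Subsections~\ref{ssec:preproj-tube}--\ref{ssec:tube-preinj} together with the factorizations (\ref{cons1})--(\ref{cons4}) to slide each endpoint along rays and corays of $\Gamma$ until both endpoints reach the distinguished quasi-simple vertex at the mouth of the relevant tube. This expresses the long move as a composition of elementary moves, a single basic long move $\iota$ or $\kappa$, and further elementary moves, exactly mirroring the factorization of the corresponding path in $Q'_m$. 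In the opposite direction, the mesh relations (a) together with (c1), (c2) guarantee that any path in $Q'_m$ traversing a connecting arrow reduces to one passing through $\iota_0(0)$, $\iota_\infty(0)$, $\kappa_0(0)$ or $\kappa_\infty(0)$, so the image of $F^{-1}$ generates every arrow of $\GG_m$ modulo the $\GG_m$-relations.

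The main obstacle lies in verifying the compatibility of the two sets of relations. On the $\GG_m$ side these are the mesh relations, the zero relations at the mouths of tubes, the diamond commutativities of Subsections~\ref{ssec:preproj-tube}--\ref{ssec:tube-preinj}, and the long-move factorization relations of Subsection~\ref{sec:relations-GG}; on the $Q'_m/R$ side they are (a), (b) and the four relations in $R$. The remark preceding the theorem already records that (c1), (c2), (e), (f) hold in $\GG_m$, which gives well-definedness of the functor from $Q'_m/R$ to $\GG_m$. The converse---that no further relations are imposed on $\GG_m$ beyond those already captured by (a), (b) and $R$---would be checked by treating each of the four families of diamonds (Subsection~\ref{ssec:preproj-tube}, Cases A and B; Subsection~\ref{ssec:tube-preinj}, Cases C and D) and each long-move factorization in turn, showing in each case that the geometric identity follows from a diamond relation (a) combined with one instance of (c1) or (c2); the pictorial dictionary provided in the appendix makes this case-by-case verification routine.
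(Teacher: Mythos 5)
Your overall strategy coincides with the paper's: the component isomorphisms are taken for granted, and the substance of the proof is to show that every long move of $\GG_m$ factors as (elementary moves) $\circ$ (one distinguished connecting long move) $\circ$ (elementary moves), using the commuting triangles of Subsection~\ref{sec:relations-GG}, which matches the corresponding path through $\iota_*(i)$ or $\kappa_*(i)$ in $Q'_m/R$. However, there is a concrete error in your identification of the connecting arrows. In $Q_m$ the arrow $\iota_0(0)$ goes from $(ghm,0)_P$ --- the \emph{last} (truncated) slice of $Q_m^P$ --- to $(2gm(n+1)+g,g)_g$, the \emph{top} vertex of the truncated tube $Q^0_{2m(n+1)}$; under the vertex bijection $F$ this is the long move $[i_{\partial},(hm(n+1)+h)_{\partial'}]\to \pi[i_{\partial},(2gm(n+1)+g+2)_{\partial}]$, and dually the $\kappa$'s land in the last slice of $\GG_m^I$. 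You instead propose to send $\iota_0(0)$, $\kappa_0(0)$, etc.\ to long moves linking the projective/injective arcs to the quasi-simple arcs $\pi([0_{\partial},2_{\partial}])$, $\pi([-2_{\partial'},0_{\partial'}])$ at the mouths. Such long moves do exist in $\GG$, but they are arrows between the wrong pairs of vertices: your assignment on arrows would not be compatible with the fixed vertex bijection $F$, so the resulting map is not a quiver morphism $Q'_m/R\to\GG_m$ at all.

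The same misplacement propagates into your factorization step: you slide endpoints ``until both endpoints reach the distinguished quasi-simple vertex at the mouth,'' whereas the correct normalization (and the one the paper uses) slides the preprojective arc \emph{forward along its ray} to the last slice of $\GG_m^P$ and the peripheral arc \emph{up its coray} to the top row of $\GG^0_{2m(n+1)}$, producing the composition $(\nu_t)_t\circ\iota_0(i)\circ(\mu_r)_r$. Sliding toward the mouth instead runs into the zero relations there and does not reproduce Br\"ustle's connecting paths. A smaller quibble: relations (c1), (c2) only express $\iota_*(g)$ and $\kappa_*(g)$ in terms of $\iota_*(0)$ and $\kappa_*(0)$; pinning down the intermediate $\iota_0(i)$, $0<i<g$, additionally requires the diamond relations (a), so ``forced by (c1) and (c2)'' is not quite accurate as stated. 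Once the connecting arrows are placed at the truncation boundaries, the rest of your argument goes through as in the paper.
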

\begin{proof}
We already have that the components $\GG^*_m$ is isomorphic to $Q^*_m$ ($*\in \{P,I,0,\infty\}$).
We need to show that 
every long move in $\GG_m$ factors through a long move corresponding to 
one of the appropriate $\iota_0$, $\iota_{\infty}$, $\kappa_0$ or $\kappa_{\infty}$. 

W.l.o.g. assume we have a long move $f:\alpha\to \beta$ with $\alpha\in\GG_m^P$ and 
$\beta\in \GG_m^0$ (all other cases follow completely analogously). 
Since there exists a long move between $\alpha$ and $\beta$, their starting points in 
$P_{g,h}$ must be the same. 

We first describe the long moves corresponding to the $\iota_0(i)$ for $i=g,g-1,\dots, 1,0$. 
These maps start from $g+1$ different vertices of the last slice in $\GG_m^P$, namely from 
the $\tau^{-ghm}\beta_i$ with $i=g,g-1,\dots, 0$. By (\ref{eq:tau-orbit-beta}), they are 
arcs of the form $\pi[(-ghm+i-g)_{\partial},ghm_{\partial'}]$ for $i=g,\dots, 0$. 
We can choose lifts for them as follows: 
$$
[i_{\partial},(hm(n+1)+h)_{\partial'}] \mbox{ for } i=g,\dots, 0. 
$$
The $\iota_0(i)$ map these vertices to the $g+1$ vertices sitting on the ray at the top level 
of $\GG_m^0$, namely the vertices 
$\pi[i_{\partial},(2gm(n+1)+g+2)_{\partial}]$ for $i=g,\dots, 0$. 
We also use $\iota_0(i)$ to denote the corresponding long move, 
$$
\iota_0(i):[i_{\partial},(hm(n+1)+h)_{\partial'}] \to \pi[i_{\partial},(2gm(n+1)+g+2)_{\partial}]
$$

Observe that the rays in $\GG_m^P$ passing through the $g+1$ vertices 
$\pi[i_{\partial},(hm(n+1)+h)_{\partial'}]$ cover the whole component. Each of these rays 
consists of a (finite) sequence of elementary moves. In terms of arcs, every such ray 
starts at $\beta_i$, $i=g,\dots, 0$ and consists of all vertices obtained through a 
sequence of rotations around the common starting point of the arcs, the 
vertex $i$ of $P_{g,h}$

Similarly, $\GG_m^0$ is formed by the vertices of all corays between the top vertices 
$\pi[i_{\partial},(2gm(n+1)+g+2)_{\partial}]$ and the mouth. The corays in $\GG_m^0$ are 
formed all arcs obtained from the top one 
by sequence of rotations around the common starting point $i_{\partial}$. 

Denote the sequence of elementary moves (fixing starting points) 
from $\alpha$ to $\pi[i_{\partial},(hm(n+1)+h)_{\partial'}]$ 
by $(\mu_r)_r$ and the sequence of elementary moves (fixing starting points) from 
$\pi[i_{\partial},(2gm(n+1)+g+2)_{\partial}]$ to $\beta$ by $(\nu_t)_t$. 

Then the composition $(\nu_t)_t\circ\iota_0(i)\circ(\mu_r)_r$ is a long move from 
$\alpha$ to $\beta$. By using both commutativity relations for triangles between 
$\GG_m^P$ and $\GG_m^0$ (Subsection~\ref{sec:relations-GG}, 
for a geometric interpretation cf. Subsection~\ref{sssec:P-Tg}), it is then 
straightforward to see that $f$ is equal to this composition. 

\end{proof}

Recall the equivalence of categories $\mathcal C(kQ_m)\to \mathcal{J}_m Q_{g,h}$ from \cite{br}, see 
Theorem~\ref{thm:hauptsatz}. The category $\mathcal C(kQ_m)$ is the $k$-category of $Q_m$ subject 
to the relations $(a)-(g)$. 
As we have mentioned before, we do not consider the
homogeneous tubes, so we will use the equivalence $\mathcal C(kQ'_m)\to \mathcal{J'}_m Q_{g,h}$, 
where we take the full subcategories without the
homogeneous tubes. 

Now we define $\mathcal C(\GG)$ to be the $k$-category determined by $\GG$ and $\mathcal C(\GG_m)$ 
the $k$-category determined by $\GG_m$. Now, 
clearly $\mathcal C(\GG_m)$ is a full 
subcategory of $\mathcal C(\GG)$. Since in $\GG_m$ the relations 
(not involving the homogeneous tubes) are satisfied,
we have the following theorem.

\begin{thm}\label{thm:equ-categories}
Let $Q'_m$ be the full subquiver of $Q_m$ consisting of all objects not in a homogeneous tube. 
Then the category $\mathcal C(\GG_m)$ is
equivalent to $\mathcal C(kQ'_m)$ and $\mathcal{J'}_m Q_{g,h}$.
\end{thm}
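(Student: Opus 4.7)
The plan is to assemble the equivalence from two inputs already in hand: Theorem~\ref{thm:iso-quivers}, which identifies $\GG_m$ with $Q'_m$ as a quiver (up to the relations coming from long moves), and Br\"ustle's Hauptsatz (Theorem~\ref{thm:hauptsatz}), which gives $\mathcal C(kQ_m)\simeq \mathcal J_mQ_{g,h}$. First I would restrict the Hauptsatz to the full subquiver $Q'_m$ obtained by removing the homogeneous tubes; this drops relations (d) and (g) entirely, as well as the instances of (b), (f) attached to $\T^\lambda$. Since $\mathcal J'_mQ_{g,h}$ is by definition the full subcategory of $\mathcal J_mQ_{g,h}$ on the remaining objects, and those objects form a union of whole connected components of $\mathcal J_mQ_{g,h}$, the restricted Hauptsatz immediately yields $\mathcal C(kQ'_m)\simeq \mathcal J'_mQ_{g,h}$.

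Next I would promote Theorem~\ref{thm:iso-quivers} to an equivalence $\mathcal C(\GG_m)\simeq \mathcal C(kQ'_m)$. The underlying functor is forced by the quiver isomorphism $F\colon \GG_m\to Q'_m$: it sends the identified vertices to one another and sends each elementary move, respectively each long move $\iota_0,\kappa_0,\iota_\infty,\kappa_\infty$, to the corresponding arrow of $Q'_m$. To see this is well-defined and faithful on morphism spaces, I would check that all the defining relations of $\mathcal C(kQ'_m)$ (the pertinent parts of (a),(b),(c1),(c2),(e),(f)) hold in $\mathcal C(\GG_m)$, and conversely that the relations on long moves listed in Section~\ref{sec:relations-GG} are consequences of the Br\"ustle relations transported via $F$.

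The mesh relations (a) and the mouth zero-relations (b) (in the non-homogeneous tubes) are built into $\GG$ because they are already imposed on the translation quiver $\Gamma$ by Proposition~\ref{prop:translationquiver}, and Section~\ref{sec:arcs-quiver} only \emph{adds} long moves without altering the relations among elementary moves. The cross-component relations (c1),(c2),(e),(f) have been observed to hold in $\GG_m$ in the Remark preceding Theorem~\ref{thm:iso-quivers}; I would expand those sketches into full verifications by the factorization argument used in the proof of Theorem~\ref{thm:iso-quivers}: every long move in $\GG_m$ factors through one of $\iota_0(i)$, $\kappa_0(i)$, $\iota_\infty(i)$ or $\kappa_\infty(i)$ by way of sequences of elementary moves, so each claimed identity reduces to tracing the endpoint of an arc under a finite composition of rotations and comparing with the right-hand side. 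In the other direction, the mesh-style relations on long moves from Subsections~\ref{ssec:preproj-tube}--\ref{ssec:tube-preinj} translate into the factorization identities (c1),(c2) together with mesh relations in the components, so the two sets of relations generate the same ideal.

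The main obstacle I expect is relation (f), which equates a path through the tube $\T_h$ with a path of length $2m(n+1)+1-j$ through $\T_g$; verifying it requires a careful count of how many elementary moves each composite factors into and confirming that both sides yield the same arc in $P_{g,h}$. Once (a),(b),(c1),(c2),(e),(f) are shown to hold, $F$ extends to a well-defined, essentially surjective and fully faithful functor $\mathcal C(\GG_m)\to \mathcal C(kQ'_m)$, and chaining with the restricted Hauptsatz delivers $\mathcal C(\GG_m)\simeq \mathcal C(kQ'_m)\simeq \mathcal J'_mQ_{g,h}$.
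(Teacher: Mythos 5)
Your proposal is correct and follows essentially the same route as the paper: restrict Br\"ustle's Hauptsatz to the non-homogeneous part to get $\mathcal C(kQ'_m)\simeq \mathcal J'_mQ_{g,h}$, and then upgrade the quiver isomorphism of Theorem~\ref{thm:iso-quivers} to a category equivalence by verifying that the relations (a),(b),(c1),(c2),(e),(f) hold in $\mathcal C(\GG_m)$ and that the long-move relations are generated by them. The paper states this step very tersely (deferring the relation checks to the Remark in Section~\ref{ssec:defn-GGm} and the appendix), whereas you spell out the two-directional verification; the only minor imprecision is your remark that instances of (f) attached to $\T^\lambda$ are dropped, since (f) only involves $\T_g$ and $\T_h$ and is retained in full.
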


%%%%%%%%%%%%%%% NEW SECTION %%%%%%%%%%%%%%%%%%%%%
\section{Relations and their geometric interpretation}\label{sec:relations}
%%%%%%%%%%%%%%%%%%%%%%%%%%%%%%%%%%%%

In this section, we give the geometric interpretation of the relations we have 
defined for $\GG$, using lifts of arcs in $\U$. 

Recall that $\GG$ is the quiver obtained from $\Gamma=$AR$\mo kQ_{g,h}$ by adding arrows 
corresponding to long moves, subject to additional relations (Sections~\ref{ssec:preproj-tube}, 
\ref{ssec:tube-preinj} and \ref{sec:relations-GG}). 
We first consider commuting squares. Let 
$$
\xymatrix@R=+0.6pc @C=+0.3pc{
 & Y_1\ar[rd]^{g_1} & \\ 
X\ar[ru]^{f_1}\ar[rd]_{f_2} & & Z \\
 & Y_2\ar[ru]_{g_2}
}
$$ 
be any diamond in $\GG$. 
Such a diamond commutes in the following cases: 

a) All the arrows in this diamond are elementary moves and hence 
all four vertices belong to the same 
connected component of $\Gamma$, with $\tau(Z)=X$. 

b) Two opposite arrows are elementary moves 
and the other two are long moves. In the latter case, 
$X$ and $Y_1$ (or $Y_2$) are in the same component (preprojective or a tube) of $\Gamma$ 
and $Y_2$ ($Y_1$, respectively) and $Z$ 
are in a common component of $\Gamma$ (a tube or the preinjective component). 

In both cases, the relations $g_1\circ f_1= g_2\circ f_2$ are imposed on $\GG$. 
We will illustrate the meshes in subsection~\ref{ssec:mesh-geometric}. They are all 
instances of the so-called Ptolemy relation: the two end terms $X$ and $Z$ of the diamond 
are viewed as the two different diagonals of a quadrilateral, the AR translate 
is the exchange of one of those diagonals by the other. This is called a flip. 

Then we give the geometric interpretation of the 
new relations for diamonds in \ref{ssec:diamonds-long}. 
We show that the new relations can also be viewed as Ptolemy relations by 
describing the quadrilateral in which the flip takes place.

%%%%%%%%%%%%%%%%%%%%%%%%%%%%%%%%%%%%
%
\subsection{Geometric interpretation of mesh relations: diamonds} \label{ssec:mesh-geometric}
%
%%%%%%%%%%%%%%%%%%%%%%%%%%%%%%%%%%%%

The geometric interpretation of a mesh relation in 
type $A$ is well known, cf. \cite[Section 2]{ccs} (for cluster categories of type $A$). 
The commutativity is the so-called Ptolemy relation: 
The two arcs 
$Y_1$ and $Y_2$ can be viewed as opposite boundary edges of a quadrilateral, 
with $X$ and $Z$ the two diagonals of this quadrilateral. Then the diagonals $X$ and $Z$ 
are related by a flip inside this polygon. 

\begin{center}
\psfragscanon
\psfrag{a1}{\tiny $a-1$}
\psfrag{a}{\tiny $a$}
\psfrag{b1}{\tiny $b-1$}
\psfrag{b}{\tiny $b$}
\psfrag{Y1}{\tiny $Y_1$}
\psfrag{Y2}{\tiny $Y_2$}
\psfrag{X}{\tiny $X$}
\psfrag{Z}{\tiny $Z$}
\psfrag{=}{$=$}
\psfrag{alpha}{\tiny\color{red}{$f_1$}}
\psfrag{beta}{\tiny\color{red}{$f_2$}}
\psfrag{gamma}{\tiny\color{red}{$g_1$}}
\psfrag{delta}{\tiny\color{red}{$g_2$}}
\includegraphics[scale=.33]{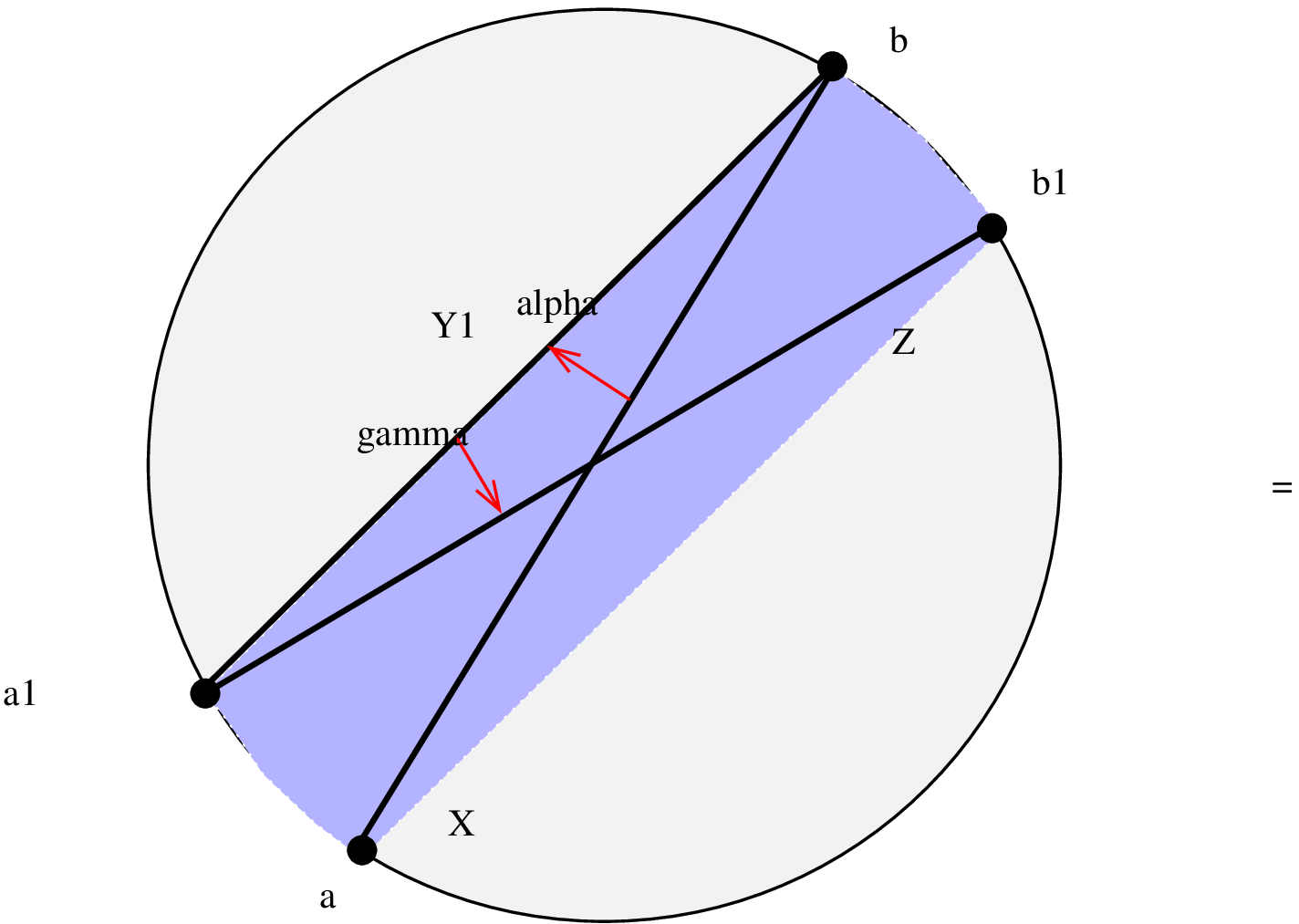}
\hskip 10pt 
\includegraphics[scale=.33]{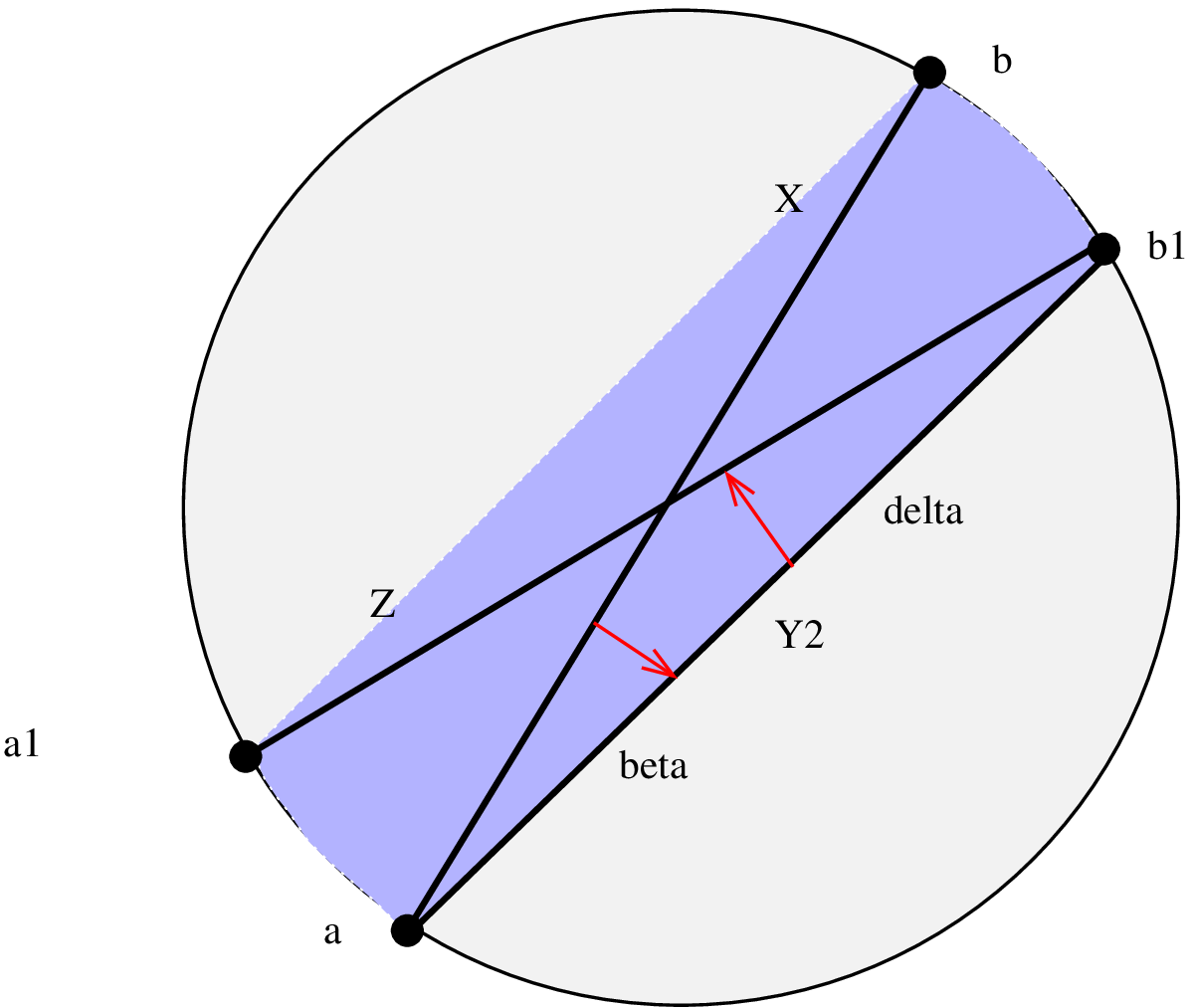}
\end{center}

The geometric reason for the commutativity is the Ptolemy relation: 
The two arcs 
$Y_1$ and $Y_2$ can be viewed as opposite boundary edges of a quadrilateral, 
with $X$ and $Z$ the two diagonals of this quadrilateral. Then the diagonals $X$ and $Z$ 
are related by a flip inside this polygon. 

\vskip 5pt

In $\Gamma$, we have four cases: 
meshes within $\Pp$ or $\Ip$, and meshes within the tubes. 
These meshes are all instances of the Ptolemy relation, we illustrate all 
cases briefly.

We consider the preprojective component, the preinjective case is completely 
analogous. 
Let  
$X= [i_{\partial},j_{\partial'}]$, $Y_1=[(i-1)_{\partial},j_{\partial'}]$, 
$Y_2=[i_{\partial},(j+1)_{\partial'}]$ 
and $Z=[(i-1)_{\partial},(j+1)_{\partial'}]$.

\begin{center}
\psfragscanon
\psfrag{4}{\tiny $j_{\partial'}$}
\psfrag{3}{\tiny $(j+1)_{\partial'}$}
\psfrag{2}{\tiny $i_{\partial}$}
\psfrag{1}{\tiny $(i-1)_{\partial}$}
\psfrag{X}{\tiny $X$}
\psfrag{Y1}{\tiny $Y_1$}
\psfrag{Y2}{\tiny $Y_2$}
\psfrag{Z}{\tiny $Z$}
\psfrag{X-Y1-Z}{\tiny $X\to Y_1\to Z$}
\psfrag{X-Y2-Z}{\tiny $X\to Y_2\to Z$}
\psfrag{=}{$=$}
\includegraphics[scale=.45]{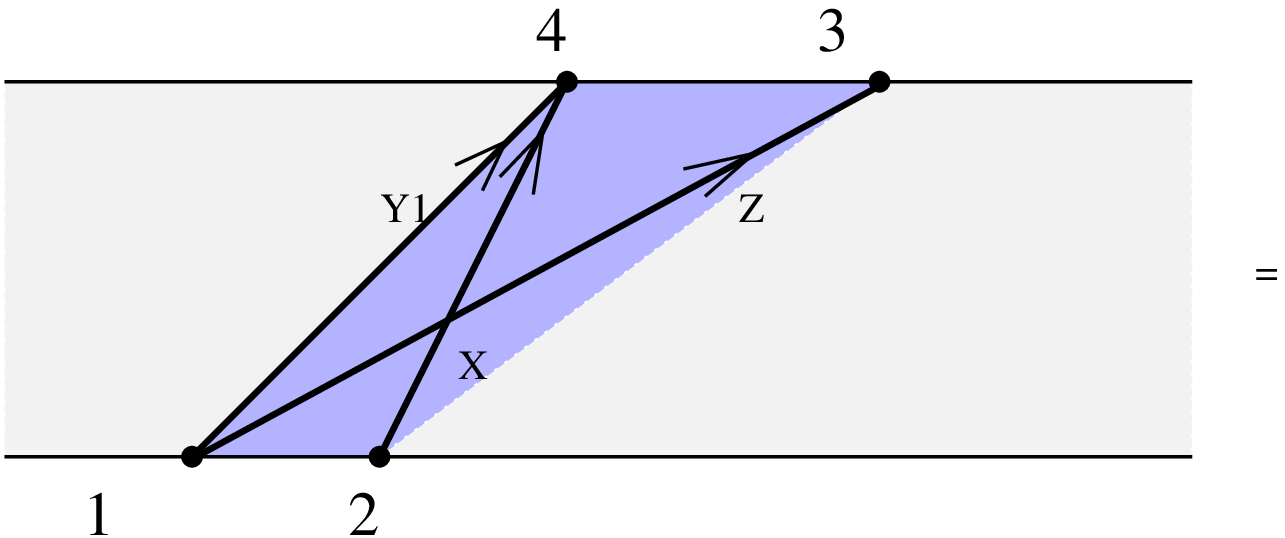}
\hskip .3cm
\includegraphics[scale=.45]{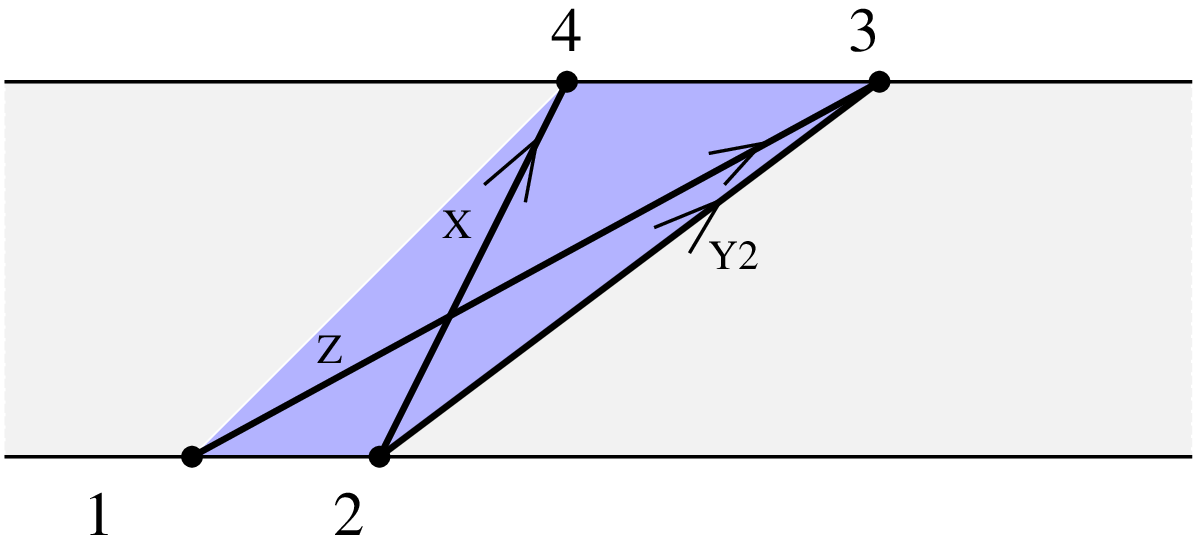}
\end{center}

Next we consider the regular tubes. 
W.l.o.g. let $X$ and $Y$ be indecomposable objects of the tube 
$\T_g$. 
Let $X=[i_{\partial},j_{\partial}]$, with $j\ge i+3$
$Y_1=[(i-1)_{\partial},j_{\partial}]$, $Y_2=[i_{\partial},(j-1)_{\partial}]$ and 
$Z=[(i-1)_{\partial},(j-1)_{\partial}]$.

\begin{center}
\psfragscanon
\psfrag{4}{\tiny $j$}
\psfrag{3}{\tiny $j-1$}
\psfrag{2}{\tiny $i$}
\psfrag{1}{\tiny $i-1$}
\psfrag{X}{\tiny $X$}
\psfrag{Y1}{\tiny $Y_1$}
\psfrag{Y2}{\tiny $Y_2$}
\psfrag{Z}{\tiny $Z$}
\psfrag{X-Y1-Z}{\tiny $X\to Y_1\to Z$}
\psfrag{X-Y2-Z}{\tiny $X\to Y_2\to Z$}
\psfrag{=}{$=$}
\includegraphics[height=1.8cm]{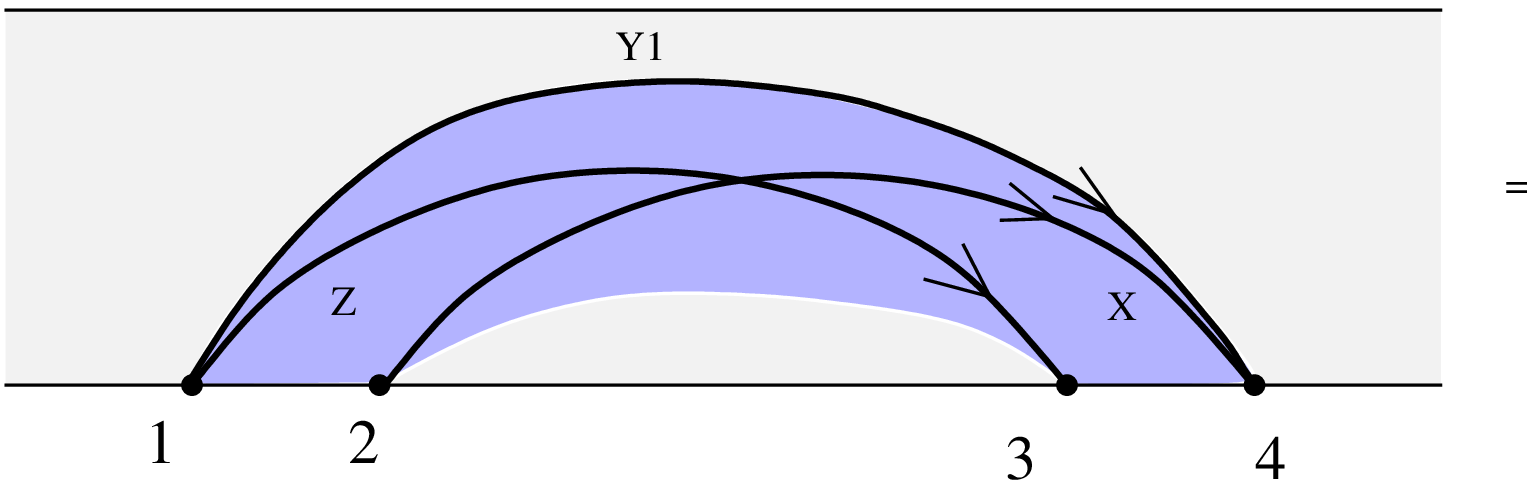}
\hskip .35cm
\includegraphics[height=1.8cm]{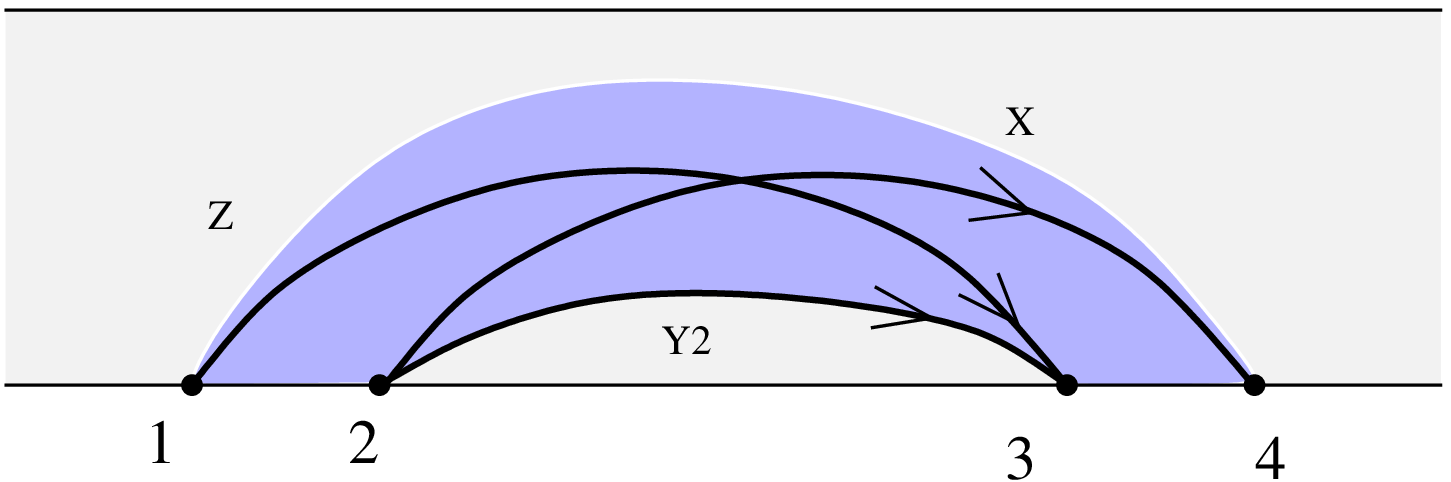}
\end{center}

%%%%%%%%%%%%%%%%%%%%%%%%%%%%%%%%%%%%
%
\subsection{Diamonds involving long moves} \label{ssec:diamonds-long}
%
%%%%%%%%%%%%%%%%%%%%%%%%%%%%%%%%%%%%

As before, long moves are indicated by dashed arrows. 

We have four cases: \\
A) $X$ and $Y_1$ belong to $\Pp$, 
$Y_2$ and $Z$ to $\T_g$, \\
B) $X$ and $Y_2$ belong to $\Pp$, $Y_1$ and $Z$ to $\T_h$, \\
C) $X$ and $Y_2$ belong to $\T_g$, $Y_1$ and $Z$ to $\Ip$, \\
D) $X$ and $Y_1$ belong to $\T_g$, $Y_2$ and $Z$ to $\Ip$. 

\medskip 

We will illustrate each of them by giving the quadrilateral of the Ptolemy relation.

%%%%%%%%%%%%%%%%%%%%%%%%%%%%%%%%%%%%
\subsubsection*{Case $A)$:} $\Pp$ to $\T_g$. 

$$
\xymatrix@R=+0.6pc @C=+0.3pc{
 & [(i-1)_{\partial},j_{\partial'}]\ar@{-->}[rrd] && \\ 
 [i_{\partial},j_{\partial'}]\ar[ru]\ar@{-->}[rrd]  && & [(i-1)_{\partial},k_{\partial}]  \\
 && [i_{\partial}, k_{\partial}]\ar[ru]  & 
}
$$ 
with $k\ge i+2$. 

\begin{center}
\psfragscanon
\psfrag{4}{\tiny $j_{\partial'}$}
\psfrag{3}{\tiny $k_{\partial}$}
\psfrag{2}{\tiny $i_{\partial}$}
\psfrag{1}{\tiny $(i-1)_{\partial}$}
\psfrag{X}{\tiny $X$}
\psfrag{Y1}{\tiny $Y_1$}
\psfrag{Y2}{\tiny $Y_2$}
\psfrag{Z}{\tiny $Z$}
\psfrag{X-Y1-Z}{\tiny $X\to Y_1\to Z$}
\psfrag{X-Y2-Z}{\tiny $X\to Y_2\to Z$}
\psfrag{=}{$=$}
\psfrag{alpha}{\tiny\color{red}{$f_1$}}
\psfrag{beta}{\tiny\color{red}{$f_2$}}
\psfrag{gamma}{\tiny\color{red}{$g_1$}}
\psfrag{delta}{\tiny\color{red}{$g_2$}}
\includegraphics[scale=.4]{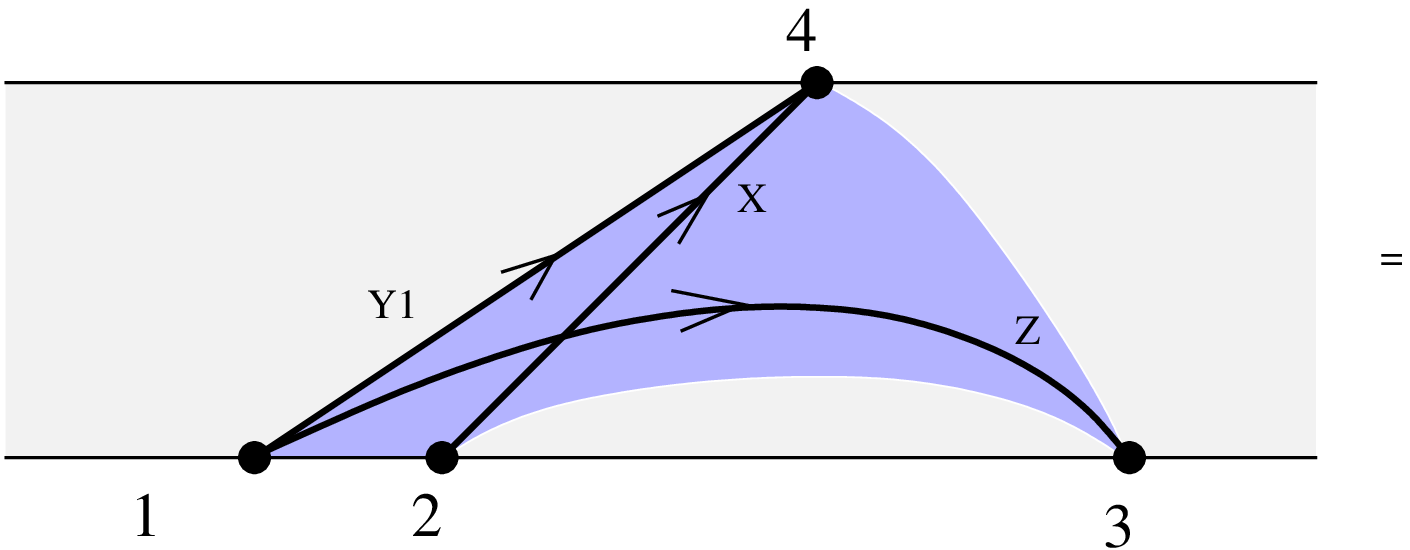}
\hskip .3cm
\includegraphics[scale=.4]{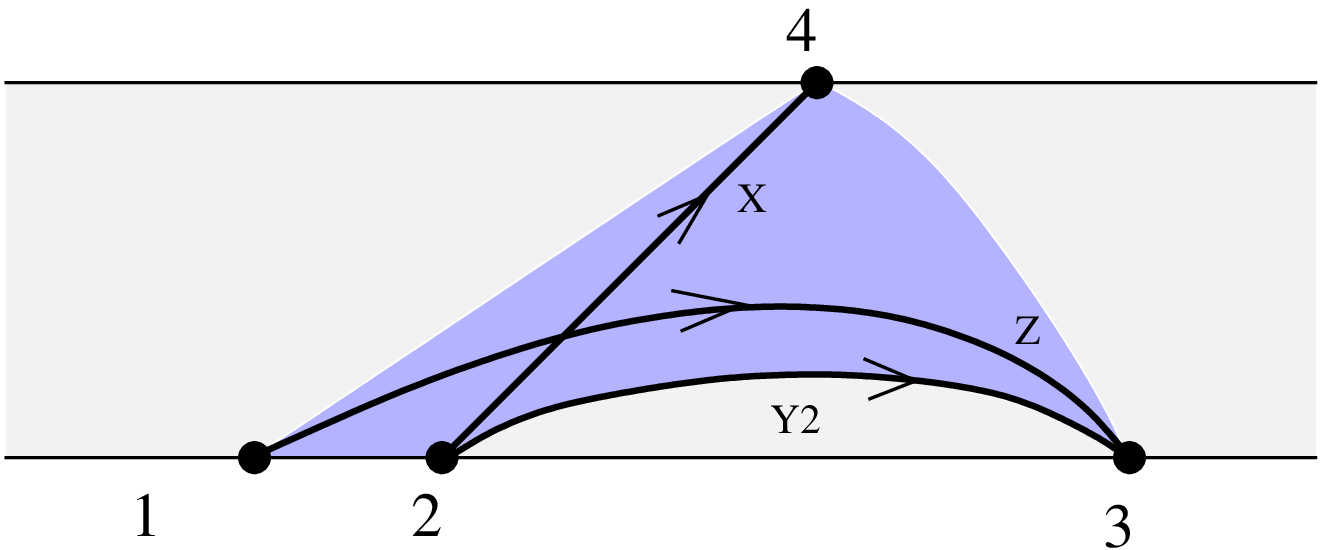}
\end{center}

%%%%%%%%%%%%%%%%%%%%%%%%%%%%%%%%%%%%
\subsubsection*{Case $B)$:} $\Pp$ to $\T_h$. 

$$
\xymatrix@R=+0.6pc @C=+0.3pc{
 &&&  &[m_{\partial'},j_{\partial'}]\ar[rd] \\ 
 &&  [i_{\partial},j_{\partial'}]\ar@{-->}[rru]\ar[rd] &&& [m_{\partial'},(j+1)_{\partial'}] \\
 &&&  [i_{\partial}, (j+1)_{\partial'}]\ar@{-->}[rru] 
}
$$ 
with $m\le j-2$.  

\begin{center}
\psfragscanon
\psfrag{4}{\tiny $(j+1)_{\partial'}$}
\psfrag{3}{\tiny $j_{\partial'}$}
\psfrag{2}{\tiny $m_{\partial'}$}
\psfrag{1}{\tiny $i_{\partial}$}
\psfrag{X}{\tiny $X$}
\psfrag{Y1}{\tiny $Y_1$}
\psfrag{Y2}{\tiny $Y_2$}
\psfrag{Z}{\tiny $Z$}
\psfrag{X-Y1-Z}{\tiny $X\to Y_1\to Z$}
\psfrag{X-Y2-Z}{\tiny $X\to Y_2\to Z$}
\psfrag{=}{$=$}
\includegraphics[height=3cm]{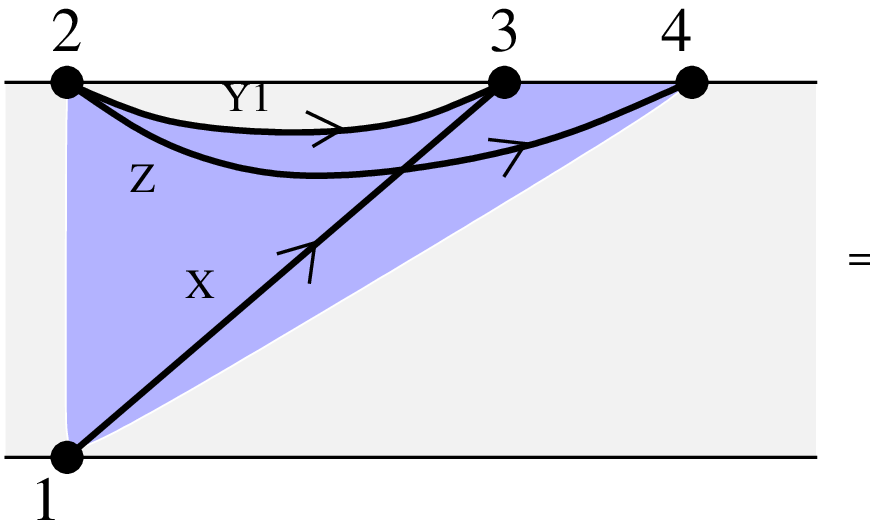}
\hskip .3cm
\includegraphics[height=3cm]{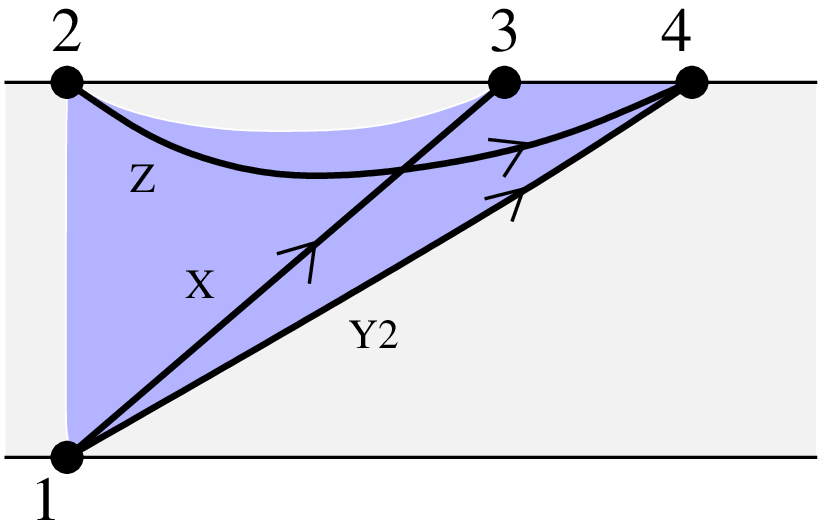}
\end{center}

%%%%%%%%%%%%%%%%%%%%%%%%%%%%%%%%%%%%
\subsubsection*{Case $C)$:} $\T_g$ to $\Ip$.

$$
\xymatrix@R=+0.6pc @C=+0.3pc{
 &&&  &[k_{\partial'},j_{\partial}]\ar[rd] \\ 
 &&  [i_{\partial},j_{\partial}]\ar@{-->}[rru]\ar[rd] &&& [k_{\partial'},(j-1)_{\partial}] \\
 &&&  [i_{\partial}, (j-1)_{\partial}]\ar@{-->}[rru] 
}
$$ 
with $j\ge i+3$. 
\begin{center}
\psfragscanon
\psfrag{4}{\tiny $k_{\partial'}$}
\psfrag{3}{\tiny $j_{\partial}$}
\psfrag{2}{\tiny $(j-1)_{\partial}$}
\psfrag{1}{\tiny $i_{\partial}$}
\psfrag{X}{\tiny $X$}
\psfrag{Y1}{\tiny $Y_1$}
\psfrag{Y2}{\tiny $Y_2$}
\psfrag{Z}{\tiny $Z$}
\psfrag{X-Y1-Z}{\tiny $X\to Y_1\to Z$}
\psfrag{X-Y2-Z}{\tiny $X\to Y_2\to Z$}
\psfrag{=}{$=$}
\includegraphics[height=2.3cm]{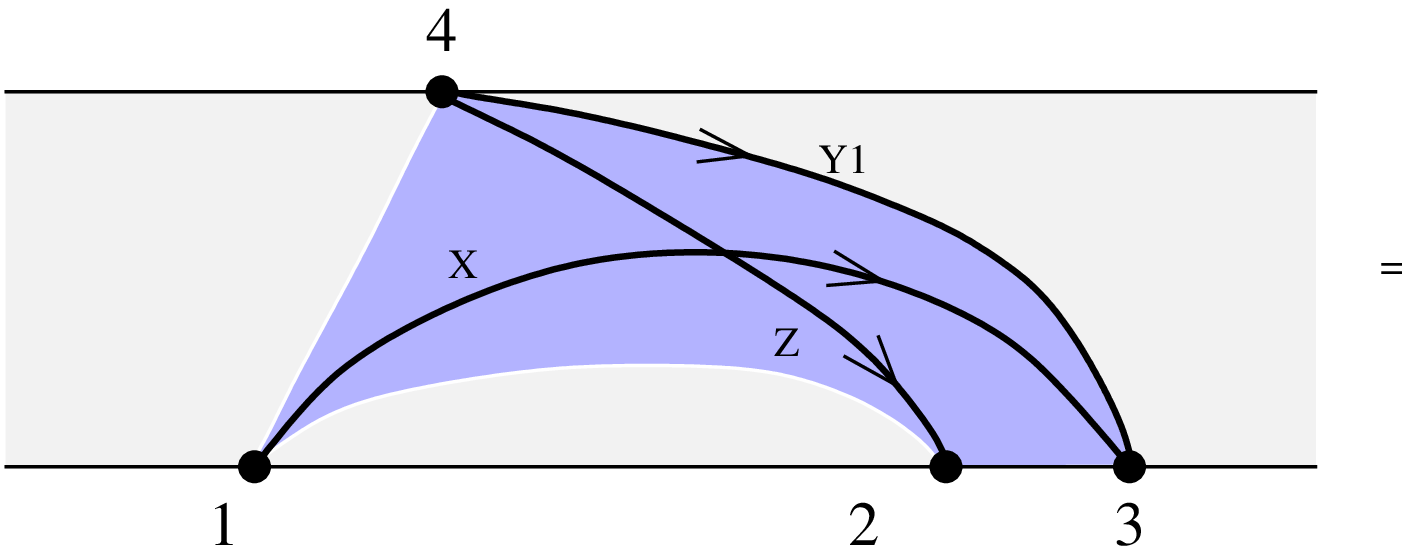}
\hskip .3cm
\includegraphics[height=2.3cm]{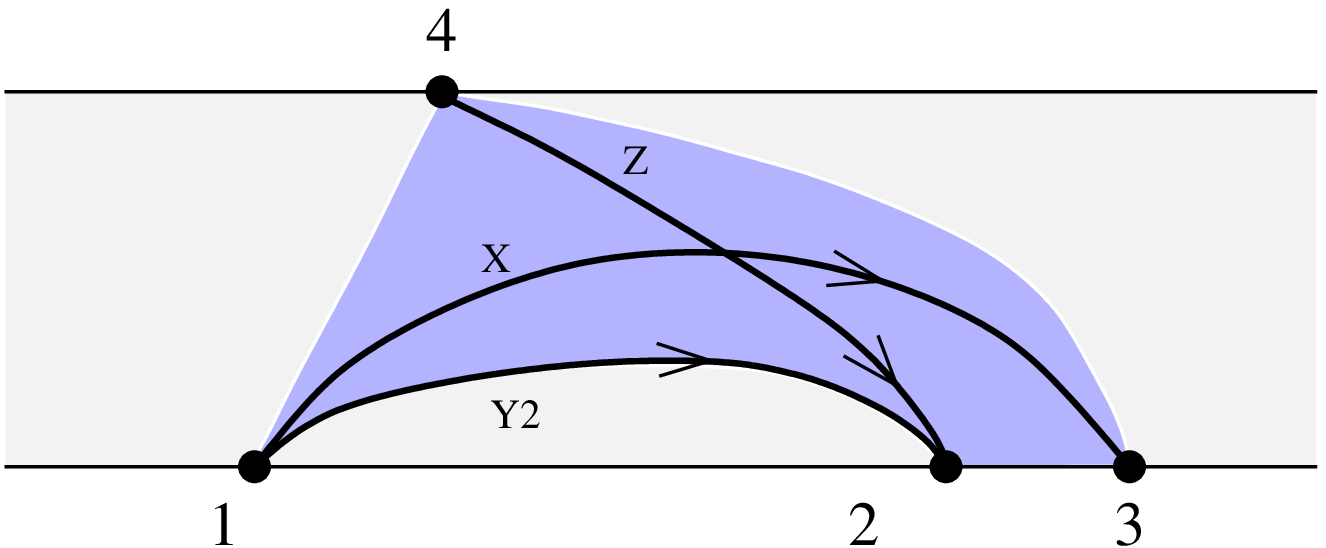}
\end{center}

%%%%%%%%%%%%%%%%%%%%%%%%%%%%%%%%%%%%
\subsubsection*{Case $D)$:} $\T_h$ to $\Ip$. 

$$
\xymatrix@R=+0.6pc @C=+0.3pc{
 & [(i+1)_{\partial'},j_{\partial'}]\ar@{-->}[rrd] && \\ 
 [i_{\partial'},j_{\partial'}]\ar[ru]\ar@{-->}[rrd]  && & [(i+1)_{\partial'},k_{\partial}]  \\
 && [i_{\partial'}, k_{\partial}]\ar[ru]  & 
}
$$ 
with $j\ge i+3$.

\begin{center}
\psfragscanon
\psfrag{4}{\tiny $i_{\partial'}$}
\psfrag{3}{\tiny $(i+1)_{\partial'}$}
\psfrag{2}{\tiny $j_{\partial'}$}
\psfrag{1}{\tiny $k_{\partial}$}
\psfrag{X}{\tiny $X$}
\psfrag{Y1}{\tiny $Y_1$}
\psfrag{Y2}{\tiny $Y_2$}
\psfrag{Z}{\tiny $Z$}
\psfrag{X-Y1-Z}{\tiny $X\to Y_1\to Z$}
\psfrag{X-Y2-Z}{\tiny $X\to Y_2\to Z$}
\psfrag{=}{$=$}
\includegraphics[height=3cm]{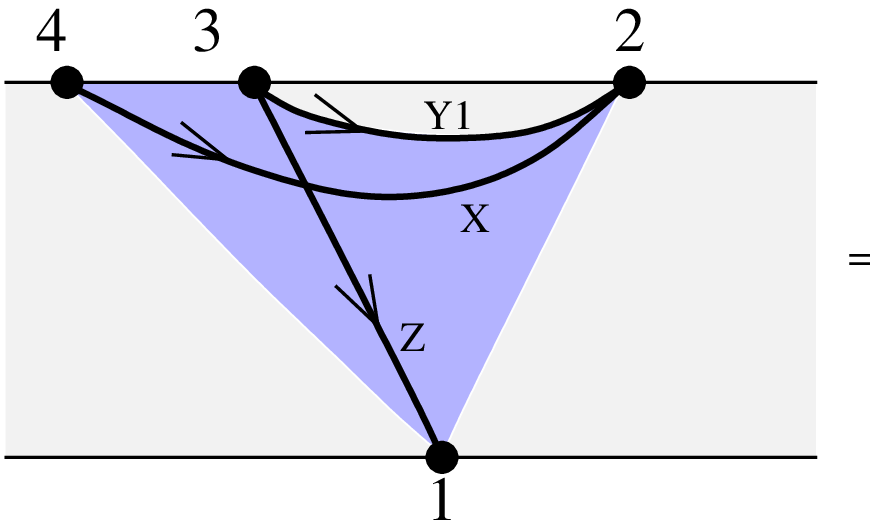}
\hskip .3cm
\includegraphics[height=3cm]{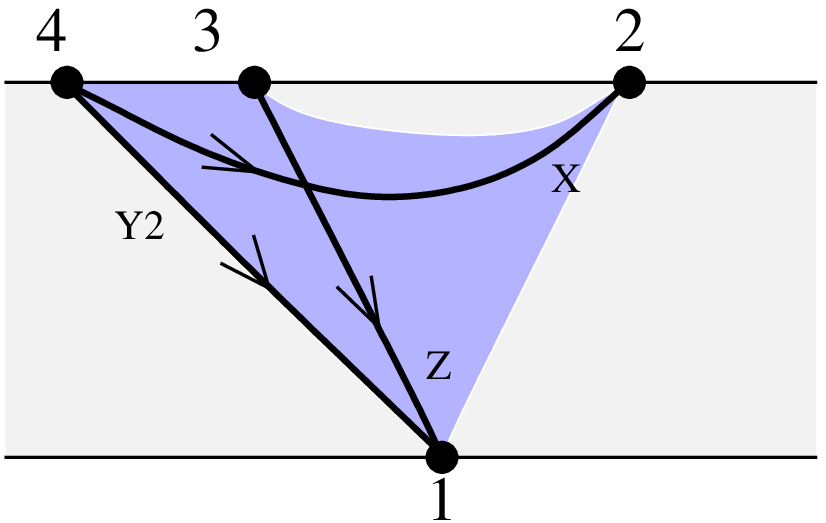}
\end{center}

%%%%%%%%%%%%%%% NEW SUBSECTION %%%%%%%%%%%%%%%%%%%%%
%
\subsection{Commuting triangles involving long moves, geometrically} \label{sec:triangles}

In $\GG$, there are also commuting triangles, involving two long moves and an elementary 
move. 
$$
\xymatrix@R=+0.6pc @C=+0.6pc{
X\ar[rd]_{f}\ar@/^/@{-->}[rrrdd]^h & & \\
 & Y\ar@{-->}[rrd]_{g} & \\ 
 & & & Z\\
}
\hskip .5cm
\xymatrix@R=+0.6pc @C=+0.6pc{
X\ar@{-->}[rrd]^{f}\ar@/_/@{-->}[rrrdd]_h & & \\
 & &Y\ar[rd]^{g} & \\ 
 & & & Z\\
}
\hskip .7cm
\xymatrix@R=+0.6pc @C=+0.6pc{
 & && Z \\
 & Y\ar@{-->}[rru]^{g} & \\ 
X\ar[ru]^{f}\ar@/_/@{-->}[rrruu]_h & & \\
}
\hskip .5cm
\xymatrix@R=+0.6pc @C=+0.6pc{
 & && Z \\
 && Y\ar[ru]_{g} & \\ 
X\ar@{-->}[rru]_{f}\ar@/^/@{-->}[rrruu]^h & & \\
}
$$ 
Recall that arrows for moves fixing the endpoint of an arc go 
up, arrows for moves fixing the starting point go down. There are twice four cases: 
two from $\Pp$ to 
$\T_g$ or to $\T_h$, two from $\T_g$ or from $\T_h$ to $\Ip$.

%%%%%%%%%%%%%%%%%%%%%%%%%%%
%
\subsubsection{Commuting triangles with long maps from $\Pp$ to $\T_g$}\label{sssec:P-Tg}
%%%%%%%%%%%%%%%%%%%%%%%%%%%
Let $[i_{\partial},j_{\partial'}]$ 
be a preprojective arc. Then for every $m \ge i+ 2$ there are 
two cases of commuting triangles 
$g\circ f=h$ involving two long {\em and one elementary} move.

$$
\xymatrix@R=1.5pc@C=2pc@!0{ 
 [i_{\partial}, j_{\partial'}] \ar[rdd]_f \ar@{-->}@/^1pc/[rrrrdddd]^h&&  \\
\\ 
& [i_{\partial}, (j+1)_{\partial'}] \ar@{-->}[rrrdd]_g &&&  &&  \\ 
\\
 &&& & [i_{\partial}, m_{\partial}] 
}
\hskip 1cm
\xymatrix@R=1.5pc@C=2pc@!0{ 
 [i_{\partial}, j_{\partial'}] \ar@{-->}[rrrdd]^f \ar@{-->}@/_1pc/[rrrrdddd]_h&&  \\
\\ 
&&& [i_{\partial}, (m+1)_{\partial}] \ar[rrdd]^g &&&  &&  \\ 
\\
 &&& && [i_{\partial}, m_{\partial}] 
}
$$

\begin{center}
\psfragscanon
\psfrag{1}{\tiny $i$}
\psfrag{2}{\tiny $m_{\partial}$}
\psfrag{6}{\tiny $m_{\partial}$}
\psfrag{3}{\tiny $j+1_{\partial'}$}
\psfrag{4}{\tiny $j_{\partial'}$}
\psfrag{5}{\tiny $m+1_{\partial}$}
\psfrag{A1}{$(I)$}
\psfrag{A2}{$(II)$}
\psfrag{alpha}{\tiny\color{red}{$f$}}
\psfrag{beta}{\tiny\color{red}{$g$}}
\psfrag{gamma}{\tiny\color{red}{$h$}}
\includegraphics[height=3cm]{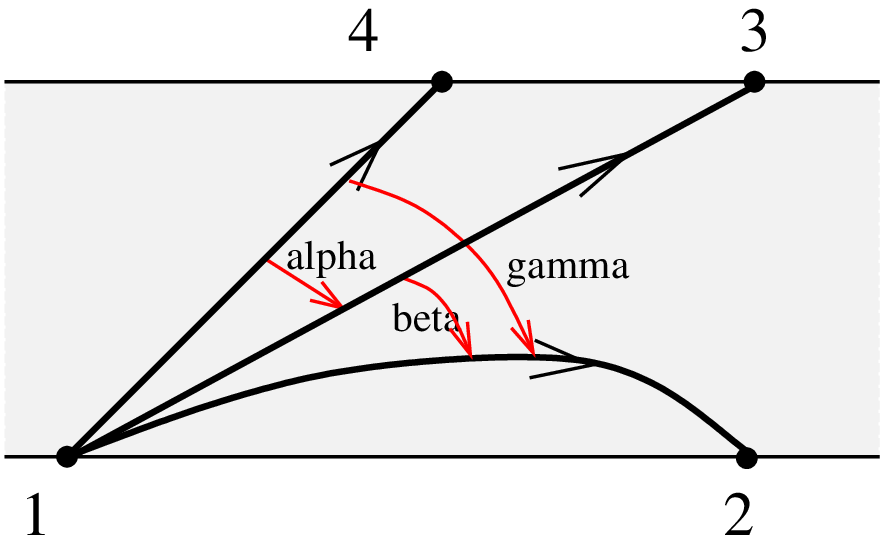}
\hskip 1cm 
\includegraphics[height=3cm]{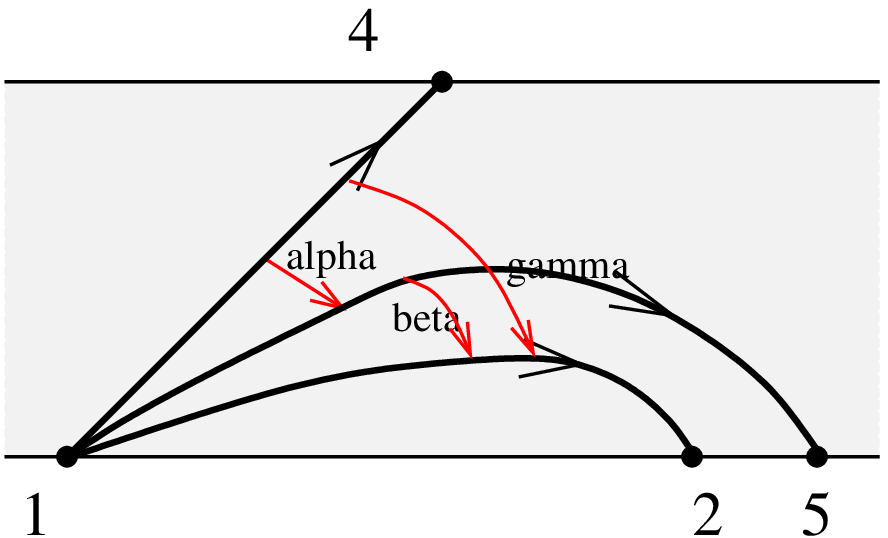}
\end{center}

%%%%%%%%%%%%%%%%%%%%%%%%%%%
%
\subsubsection{Commuting triangles from $\Pp$ to $\T_h$}\label{sssec:P-Th}
Let $[i_{\partial},j_{\partial'}]$ be a preprojective arc. Then for every $k\le j-2$, 
there are two kind of commuting triangles $g\circ f=h$ involving one elementary and two long 
moves.

$$
\xymatrix@R=1.5pc@C=2pc@!0{ 
 &&& & [k_{\partial}, j_{\partial'}] \\
\\ 
& [(i+1)_{\partial}, j_{\partial'}] \ar@{-->}[rrruu]^g &&&  &&  \\ 
\\
 [i_{\partial}, j_{\partial'}] \ar[ruu]^f \ar@{-->}@/_1pc/[rrrruuuu]_h&&  \\
}
\hskip 1cm
\xymatrix@R=1.5pc@C=2pc@!0{ 
  &&& && [k_{\partial}, j'_{\partial}]  \\
\\ 
&&& [(k-1)_{\partial}, j'_{\partial}] \ar[rruu]_g &&&  &&  \\ 
\\
 [i_{\partial}, j_{\partial'}] \ar@{-->}[rrruu]_f \ar@{-->}@/^1pc/[rrrruuuu]^h&& 
}
$$

\begin{center}
\psfragscanon
\psfrag{1}{\tiny $i_{\partial}$}
\psfrag{2}{\tiny $k-1_{\partial'}$}
\psfrag{3}{\tiny $j_{\partial'}$}
\psfrag{5}{\tiny $i-1_{\partial}$}
\psfrag{6}{\tiny $k_{\partial'}$}
\psfrag{alpha}{\tiny\color{red}{$f$}}
\psfrag{beta}{\tiny\color{red}{$g$}}
\psfrag{gamma}{\tiny\color{red}{$h$}}
\includegraphics[height=3cm]{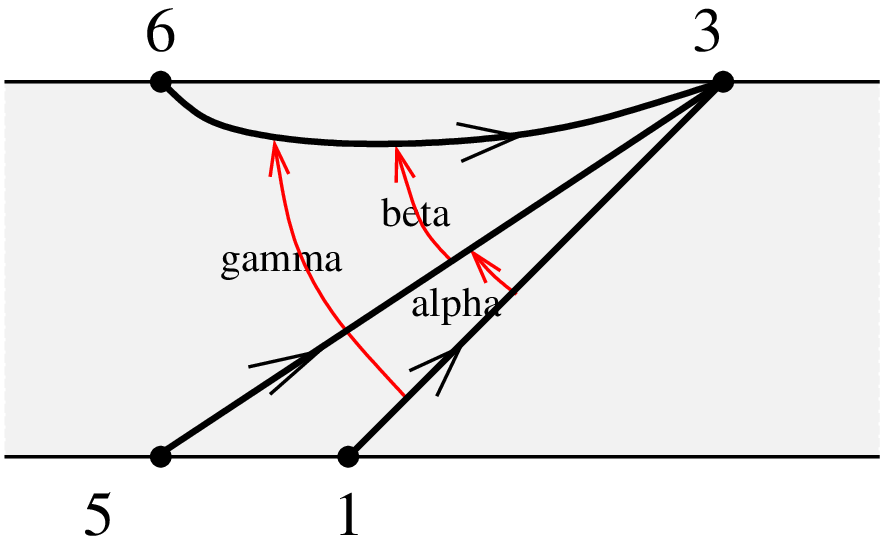}
\hskip 1cm
\includegraphics[height=3cm]{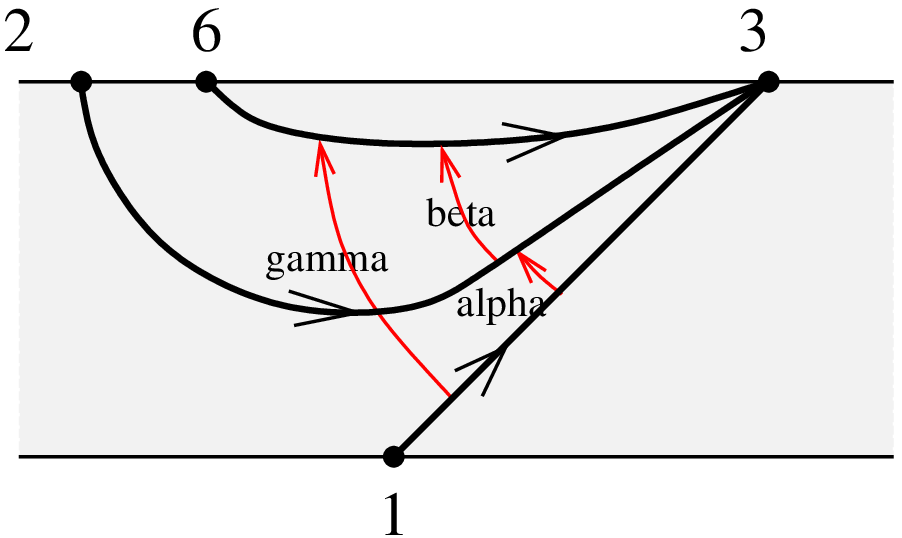}
\end{center}

%%%%%%%%%%%%%%%%%%%%%%%%%%%
%
\subsubsection{Commuting triangles from $\T_g$ to $\Ip$}\label{sssec:Tg-I}
%%%%%%%%%%%%%%%%%%%%%%%%%%%

Let $[j_{\partial},i_{\partial}]$ be a peripheral arc (i.e. $j\le i-2$). 
The two kind of commuting triangles $g\circ f=h$ involving elementary and long moves 
end at an arbitrary preinjective arc $[k_{\partial'},i_{\partial}]$: 
$$
\xymatrix@R=1.5pc@C=2pc@!0{ 
 &&& & [k_{\partial'}, j_{\partial}] \\
\\ 
& [(i-1)_{\partial}, j_{\partial}] \ar@{-->}[rrruu]^g &&&  &&  \\ 
\\
 [i_{\partial}, j_{\partial}] \ar[ruu]^f \ar@{-->}@/_1pc/[rrrruuuu]_h&&  \\
}
\hskip 1cm
\xymatrix@R=1.5pc@C=2pc@!0{ 
  &&& && [k_{\partial'}, j_{\partial}]  \\
\\ 
&&& [(k-1)_{\partial'}, j_{\partial}] \ar[rruu]_g &&&  &&  \\ 
\\
 [i_{\partial}, j_{\partial}] \ar@{-->}[rrruu]_f \ar@{-->}@/^1pc/[rrrruuuu]^h&& 
}
$$

\begin{center}
\psfragscanon
\psfrag{1}{\tiny $i_{\partial}$}
\psfrag{2}{\tiny $i-1_{\partial}$}
\psfrag{3}{\tiny $j_{\partial}$}
\psfrag{4}{\tiny $k_{\partial'}$}
\psfrag{5}{\tiny $k-1_{\partial'}$}
%\psfrag{6}{\tiny $j-k-1_{\partial}$}
\psfrag{alpha}{\tiny\color{red}{$f$}}
\psfrag{beta}{\tiny\color{red}{$g$}}
\psfrag{gamma}{\tiny\color{red}{$h$}}
\includegraphics[height=3.1cm]{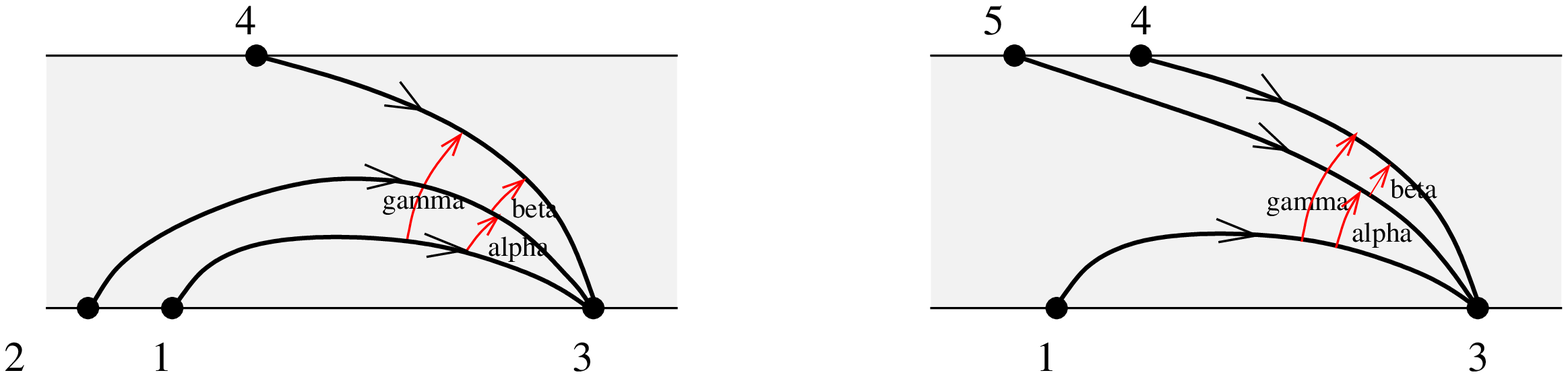}
\end{center}

%%%%%%%%%%%%%%%%%%%%%%%%%%%
\subsubsection{Commuting triangles from $\T_h$ to $\Ip$}\label{sssec:Th-I}
%%%%%%%%%%%%%%%%%%%%%%%%%%%

Let $[i_{\partial'},j_{\partial'}]$ be a peripheral arc (i.e. $i\le j-2$). 
The two kind of commuting triangles $g\circ f=h$ involving elementary and long moves 
end at an arbitrary preinjective arc $[i_{\partial'},m_{\partial'}]$: 

$$
\xymatrix@R=1.5pc@C=2pc@!0{ 
 [i_{\partial'}, j_{\partial'}] \ar[rdd]_f \ar@{-->}@/^1pc/[rrrrdddd]^h&&  \\
\\ 
& [i_{\partial'}, (j+1)_{\partial'}] \ar@{-->}[rrrdd]_g &&&  &&  \\ 
\\
 &&& & [i_{\partial'}, m_{\partial}] 
}
\hskip 1cm
\xymatrix@R=1.5pc@C=2pc@!0{ 
 [i_{\partial'}, j_{\partial'}] \ar@{-->}[rrrdd]^f \ar@{-->}@/_1pc/[rrrrdddd]_h&&  \\
\\ 
&&& [i_{\partial'}, (m+1)_{\partial}] \ar[rrdd]^g &&&  &&  \\ 
\\
 &&& && [i_{\partial'}, m_{\partial}] 
}
$$

\begin{center}
\psfragscanon
\psfrag{1}{\tiny $i_{\partial'}$}
\psfrag{2}{\tiny $m_{\partial}$}
\psfrag{3}{\tiny $j_{\partial'}$}
\psfrag{4}{\tiny $j$}
\psfrag{5}{\tiny $m+1_{\partial}$}
\psfrag{6}{\tiny $j+1_{\partial'}$}
\psfrag{alpha}{\tiny\color{red}{$f$}}
\psfrag{beta}{\tiny\color{red}{$g$}}
\psfrag{gamma}{\tiny\color{red}{$h$}}
\includegraphics[height=3.1cm]{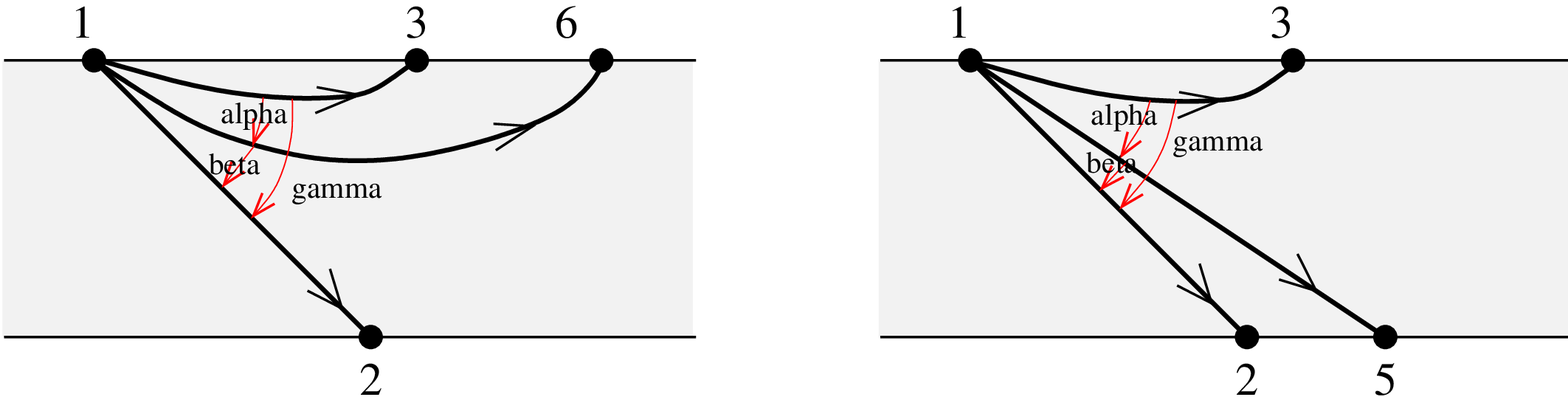}
\end{center}

%%%%%%%%%%%%%%% NEW SUBSECTION %%%%%%%%%%%%%%%%%%%%%
%
\subsection{Zero relations, geometrically} \label{ssec:zero-triangle}

It is well known that there are zero relations at the mouth of the tubes. Geometrically, 
they can be viewed as special cases of the diamond relations in the 
tubes, namely the case where $Y_2$ (or $Y_1$) becomes zero, i.e. where the vertex 
$X$ is of the form $[i_{\partial},(i+2)_{\partial}]$ or $[i_{\partial'},(i+2)_{\partial'}]$ 
see Subsection~\ref{ssec:mesh-geometric}

%%%%%%%%%%%%%%%%%%%%%%%%%%
%
\section{Application to the cluster category of affine type $A$}\label{sec:cluster-cat}
%
%%%%%%%%%%%%%%%%%%%%%%%%%%

The cluster category of type $\tilde{A}_n$ has been introduced by \cite{bmrrt}. It is 
by definition the orbit category 
$D^b(\mo\tilde{A})/\tau^{-1}\circ[1]$. The effect of taking such a quotient is most visibly 
on the AR quiver: the cluster category has a transjective component, arising from 
the components $\Pp$ and $\Ip$ of the module category. 

Using unoriented versions of our arcs, we can describe the AR-quiver of the cluster 
category. The only missing ingredient is the slice linking the preprojective component with 
the preinjective component. We also write $\tau$ for the translation map on unoriented 
arcs, induced by $i\mapsto i+1$ on $\partial$ and $i\mapsto i-1$ on $\partial'$. 

If $\alpha\in\GG$ is an oriented arc, let $\underline{\alpha}$ be the unoriented version of 
$\alpha$. Furthermore, let $\underline{\eta_0}$, $\underline{\eta_1}$, $\dots$, 
$\underline{\eta_n}$ be the arcs $\tau^{-1}(\underline{\gamma_i})=\tau(\underline{\beta_i})$. 

Let $\GGG$ be the quiver whose vertices are the 
$\underline{\alpha}$, $\alpha\in \GG$, together with the $\underline{\eta_i}$. Its arrows are 
the arrows of $\GG$ together with the obvious arrows from the $\underline{\gamma_i}$ 
to the $\underline{\eta_i}$ and from the $\underline{\eta_i}$ to the $\underline{\beta_i}$, subject to the 
relations from $\GG$ and the additional mesh relations around the new slice. 
The new quiver $\GGG$ is a stable translation quiver. 
Up to the additional slice of vertices, the quiver $\GGG$ looks like $\GG$, the latter is isomorphic to 
a full 
subquiver of $\GGG$. $\GGG$ has components $\GGG^0$ and $\GGG^{\infty}$ 
consisting of peripheral arcs and the transjective component of the arcs from 
$\GG^P\cup \GG^I$ together with $\{\underline{\eta_i}\mid i=0\dots, n\}$. Write 
$\GGG_m^{Tr}$ for the full subquiver on the vertices of $\GG^P\cup\GG^I$ and 
the $\{\underline{\eta_i}\mid i=0\dots, n\}$. 

\begin{dfn}
We define $\GGG_m$ to be the full subquiver of $\GGG$ consisting of the vertices in 
$\GGG_m^{Tr}$, $\GGG_m^0$ and $\GGG_m^{\infty}$. 
\end{dfn}

Let $\mathcal J'_m(\mathcal{C}_{\tilde{A}})$ be the full subcategory of the cluster category 
$\mathcal{C}_{\tilde{A}}$ of type $Q_{g,h}$ consisting of all objects who correspond to 
vertices in $\GGG_m$. Then the following is a direct consequence of 
Theorems \ref{thm:iso-quivers} and \ref{thm:equ-categories}. 

\begin{cor}
The $k$-category of $\GGG_m$ is equivalent to $\mathcal J'_m(\mathcal{C}_{\tilde{A}})$. 
\end{cor}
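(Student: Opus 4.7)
The plan is to reduce this corollary to Theorems \ref{thm:iso-quivers} and \ref{thm:equ-categories} by exploiting the orbit category description of $\mathcal C_{\tilde A}$. Recall that the indecomposables of $\mathcal C_{\tilde A}$ are in bijection with indecomposable $kQ_{g,h}$-modules together with one copy of the shifted projectives $P_i[1]$ for $i=0,\dots,n$, and that the AR-quiver of $\mathcal C_{\tilde A}$ consists of a single transjective component (formed by gluing the preprojective and preinjective components via a slice representing the $P_i[1]$) together with the regular and homogeneous tubes, which survive unchanged. Under the identification $\tau^{-1}\circ[1]\simeq\mathrm{id}$ we have $P_i[1]=\tau^{-1}\gamma_i=\tau\beta_i$, and the vertices $\underline{\eta_i}$ of $\GGG$ are precisely the combinatorial models for the $P_i[1]$. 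This gives a bijection between the vertices of $\GGG_m$ and the indecomposables in $\mathcal J'_m(\mathcal C_{\tilde A})$.

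First I would verify that the quiver $\GGG$, restricted to the subquiver obtained from $\GG$ by forgetting orientations, realizes exactly the AR-quiver and the infinite-radical maps of the image of $\mo\tilde A$ inside $\mathcal C_{\tilde A}$. Since the forgetful map from oriented to unoriented arcs is a bijection on admissible arcs in the preprojective, preinjective and tube components (an oriented arc and its reverse are identified under $\tau^{-1}[1]$ only for arcs between preprojectives and preinjectives, as the given remark makes explicit), Theorem~\ref{thm:iso-quivers} immediately yields an isomorphism of the relevant full subquivers, and Theorem~\ref{thm:equ-categories} lifts this to an equivalence of the corresponding full subcategories. Then I would add the new slice $\{\underline{\eta_i}\}$ and its arrows $\underline{\gamma_i}\to\underline{\eta_i}\to\underline{\beta_i}$, observing that the new meshes impose exactly the mesh relations in $\mathcal C_{\tilde A}$ around the slice $P_i[1]$; these match the AR-quiver of $\mathcal C_{\tilde A}$ as described in \cite{bmrrt} for $\tilde A$.

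Next I would check the long moves. The long moves were built in $\GG$ as models for elements of $\Rad^\infty(\mo\tilde A)$; under the canonical functor $\mo\tilde A\to\mathcal C_{\tilde A}$ these map to nontrivial elements of $\Rad^\infty(\mathcal C_{\tilde A})$ between the corresponding objects, and Theorem~\ref{thm:equ-categories} guarantees that the relations (a)--(g) among them are preserved. Because the long moves never start or end at a vertex $\underline{\eta_i}$ (the connecting slice contains no peripheral arcs and no new long moves are defined in Section~\ref{sec:cluster-cat}), no further relations among long moves need to be checked.

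The main obstacle will be making precise the compatibility between the orientation-forgetting step on arcs and the orbit identification in $\mathcal C_{\tilde A}$: one has to make sure that the translation $\tau$ on $\GGG$, which is defined combinatorially by $i_\partial\mapsto(i+1)_\partial$ and $j_{\partial'}\mapsto(j-1)_{\partial'}$, agrees with the AR-translate of $\mathcal C_{\tilde A}$ after the identification, and in particular that the new slice on the $\underline{\eta_i}$ inserts the correct number of arrows into and out of each $\underline{\gamma_i}$ and $\underline{\beta_i}$ to form genuine meshes. Once this bookkeeping is done, the corollary follows by combining the equivalence on each component with the fact that $\mathcal J'_m(\mathcal C_{\tilde A})$ has the same indecomposable objects as $\GGG_m$ and the relations on $\GGG_m$ are precisely the relations of $\mathcal C(kQ'_m)$ augmented by the standard mesh relations around the connecting slice.
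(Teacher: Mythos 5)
Your proposal is correct and follows essentially the same route as the paper, which simply records this corollary as a direct consequence of Theorems~\ref{thm:iso-quivers} and \ref{thm:equ-categories} after constructing $\GGG_m$ from $\GG_m$ by forgetting orientations and inserting the slice of the $\underline{\eta_i}$. Your version supplies more of the bookkeeping (the identification $P_i[1]=\tau^{-1}\underline{\gamma_i}=\tau\underline{\beta_i}$, the compatibility of $\tau$ with the AR-translate, and the observation that no new long moves touch the connecting slice) than the paper itself does, but the underlying argument is the same.
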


%What I had wanted to say is that the k-category of the (truncated quiver) QQQ_m is 
%equivalent to the truncated version of the cluster category. 
%sorry about the confusion. 
%
%The AR quiver of the truncated version of the 
%cluster category is just the \Gamma_m, with the additional slice
%or, maybe better: the AR-quiver of the cluster category is the quiver \Gamma with the 
%additional slice.
%

\noindent \textbf{Acknowledgements}
KB thanks NTNU for supporting this project and the Mittag Leffler institute for inviting her 
to the program on Representation Theory in 2015.

\small

\normalsize

\appendix
%%%%%%%%%%%%%%% NEW SECTION %%%%%%%%%%%%%%%%%%%%%
%
\section{Relations appearing in Theorem~\ref{thm:hauptsatz}}%\label{sec:relations-geometric}
%%%%%%%%%%%%%%% %%%%%%%%%%%%%%%%%%%%%

We use the geometric interpretation of relations to visualize the 
relations in Theorem~\ref{thm:hauptsatz}

Relations (a) and (b) of the theorem are the usual mesh relations, they have already 
been described above.

\subsection*{Relations (c1),(c2)} 

Relation (c1) has on one side $h$ compositions of elementary moves between preprojective 
arcs, followed by a long move and then $g$ elementary moves in the tube of rank $g$, 
we can write it as 
$$
\xymatrix@R=1.5pc@C=4pc@!0{ 
X_0\ar[r]^{\alpha} & X_1\ar[r]^{\alpha} & \dots\ar[r]^{\alpha} & X_h\ar@{-->}[r]^{\iota_0(0)} 
& Y_g\ar[r]^{\pi} & Y_{g-1}\ar[r]^{\pi} & \dots\ar[r]^{\pi} & Y_0  
% [i_{\partial'}, j_{\partial'}] \ar[rdd]_f \ar@{-->}@/^1pc/[rrrrdddd]^h&&  \\
%\\ 
%& [i_{\partial'}, (j+1)_{\partial'}] \ar@{-->}[rrrdd]_g &&&  &&  \\ 
%\\
% &&& & [i_{\partial'}, m_{\partial}] 
}
$$
dropping the subscripts on the $\alpha$ and $\pi$. 
The arcs corresponding to these vertices all have lifts starting at $0_{\partial}$, 
the $X_i$ end at $\partial'$ and the $Y_i$ correspond to peripheral arcs. More precisely, 
we can choose a lift for $X_0$, 
$X_0=[-ghm_{\partial},ghm_{\partial'}]$. 
Under the arrows $\alpha$, 
the arcs get lengthened 
by elementary moves fixing the ending point 
$-ghm_{\partial}$, up to the lift 
$[-ghm_{\partial},(ghm+h)_{\partial'}]$
$X_h$. 

To prove that in our geometric set-up the first relation of (c1) holds, we can use the commuting 
triangle on the left hand side of Subsection~\ref{sssec:P-Th} to see that $\iota_0(0)\circ\alpha$ 
is a long move $\mu_{h-1}$ from $X_{h-1}$ to $Y_g$. Using this relation again, we see that 
$\alpha\circ\mu_{h-1}$ is a long move $\mu_{h-2}$ from $X_{h-2}$ to $Y_g$, etc. This proves 
that the composition $\iota_0(0)\circ\alpha_P^g$ is equal to a long move $\mu_0:X_0\to Y_g$. 
Now we use the commuting triangle on the right hand side of Subsection~\ref{sssec:P-Th} $h$ times 
to 
iteratedly replace the long move from $X_0$ composed with $\pi$ by another long move from 
$X_0$ to $Y_i$, $i=g-1,\dots, 0$. 

The other three relations in (c1) and (c2) work completely analogously, using Subsections 
~\ref{sssec:P-Th}, ~\ref{sssec:Tg-I} and \ref{sssec:Th-I} respectively. 

The relations in (e) rely on relation (f), so we will first consider the latter. 

%%%%%%%%%%%%%%% NEW SUBSECTION %%%%%%%%%%%%%%%%%%%%%
%
\subsection*{Relation (f)} 

For this, we need more work. Consider a tube $\T_g$ of rank $g$. We want to describe the effect 
of a composition of $2g$ elementary moves from an indecomposable object $X$ of $\T_g$ to itself. 

Let $[a_{\partial},b_{\partial}]$ be the arc (viewed in $\U$), $b\ge a+2$. 
There are (at most) two 
elementary moves on $[a_{\partial},b_{\partial}]$. 

Going down (up) from a peripheral arc in $\U$ corresponds to moving the endpoint (resp. the starting point) 
of the corresponding arc one step to the left, thus making it shorter (longer). 
We write $f_d$ for the elementary move downwards, 
$f_d:[a_{\partial},b_{\partial}]\mapsto[a_{\partial},(b-1)_{\partial}]$ and $f_u$ for the one going up, 
$f_u:[a_{\partial},b_{\partial}]\mapsto [(a-1)_{\partial},b_{\partial}]$. Note that for $b=a+2$, the image 
under $f_d$ is a boundary segment and hence zero. 

For $g\ge 1$ 
we write $f^{\downarrow g}$ for the composition of 
$g$ elementary moves downwards and 
$f^{\uparrow g}$ for $g$ consecutive elementary moves upwards. 
We then abbreviate the composition of $g$ downwards with $g$ upwards 
elementary moves by $f^{\downarrow\uparrow g}$: 
$$
f^{\downarrow\uparrow g}([a_{\partial},b_{\partial}]):=f^{\uparrow g}\circ f^{\downarrow g}([a_{\partial},b_{\partial}])
$$ 

We will use the notations for these compositions of elementary moves within the universal 
cover $\U$, and also in the annulus. 

The effect of a composition of $g$ downwards with $g$ upwards moves on peripheral arcs 
at the lower boundary is the following: 

\begin{eqnarray*}
f^{\downarrow\uparrow g}[i_{\partial},j_{\partial}] & = & 
		\left\{
 		\begin{array}{ll} [(i-g)_{\partial}, (j-g)_{\partial}] & \mbox{if $j-i>g+1$} \\
		       0 & \mbox{else}. 
		\end{array}
		\right.
%\mbox{ unless $j-i\le g+1$. In that case, the image is zero.} 
\end{eqnarray*}

For arcs of $\T_h$, we define $f^{\uparrow\downarrow h}$ analogously, the effect of $h$ elementary moves 
up followed by $h$ elementary moves down. Since for endpoints at $\partial'$, elementary moves 
increase an endpoint by $+1$, we get 

\begin{eqnarray*}
f^{\uparrow\downarrow h}[i_{\partial},j_{\partial}] & = & 
		\left\{
 		\begin{array}{ll} [(i+h)_{\partial}, (j+h)_{\partial}] & \mbox{if $j-i>h+1$} \\
		       0 & \mbox{else}. 
		\end{array}
		\right.
%\mbox{ unless $j-i\le g+1$. In that case, the image is zero.} 
\end{eqnarray*}

%The following statement follows directly from the definitions. 
%\begin{rem} \label{rem:g-moves}
%Consider the stripe $\U$. Let $[i_{\partial},j_{\partial}]$ be an oriented arc in $\U$, $j\ge i+2$, 
%let $g\ge 1$. Then the following holds: 
%\begin{enumerate}
%\item
%$f_d^{g}[i_{\partial},j_{\partial}]=[i_{\partial},(j-g)_{\partial}]$ unless $j-i\le g+1$. 
%In that case, the image is zero. 
%\item
%$f_u^{g}[i_{\partial},j_{\partial}]=[(i-g)_{\partial},j_{\partial}]$ 
%\item 
%$f^{\downarrow\uparrow g}[i_{\partial},j_{\partial}]=[(i-g)_{\partial}, (j-g)_{\partial}]$ unless $j-i\le g+1$. In that case, the image is zero. 
%\end{enumerate}
%\end{rem}

In $P_{g,h}$, this shift by 
$g$ or by $+h$ is not visible, 
the effect of the compositions $f^{\downarrow\uparrow g}$ and $f^{\uparrow\downarrow h}$ 
on arcs in $P_{g,h}$ is to move 
both endpoints of a peripheral arc around the boundary once (or zero, if the arc is close to 
the mouth of the tube): 
% 
%
%\begin{lem}
%Let $\alpha=\pi[i_{\partial},j_{\partial}]$ and $\beta=\pi[k_{\partial'},l_{\partial'}]$ 
%be peripheral arcs. 
%Then the following holds: 
%\begin{enumerate}
%\item
%$f_d^g \pi[i_{\partial},j_{\partial}]=$ 
%				$\left\{\begin{array}{ll} 
%				\pi[i_{\partial},(j-g)_{\partial}]    & \mbox{ for $j-i\ge g+2$} \\
%                                        0 & \mbox{else} 
%                                \end{array}\right.$ 
%\item
%$f_u^g \pi[i_{\partial},j_{\partial}]=\pi[(i-g)_{\partial},j_{\partial}]$. 
%\item 
%In particular, 
$$
f^{\downarrow\uparrow g} \pi[i_{\partial},j_{\partial}]=\left\{
                  \begin{array}{ll} \pi[i_{\partial},j_{\partial}]& \mbox{if $j-i\ge g+2$} \\ 
                                            0 & \mbox{else} 
                  \end{array}
                   \right. 
$$
$$
f^{\uparrow\downarrow h} \pi[k_{\partial'},l_{\partial'}]=\left\{
                  \begin{array}{ll} \pi[k_{\partial'},l_{\partial'}]& \mbox{if $l-k\ge h+2$} \\ 
                                            0 & \mbox{else} 
                  \end{array}
                   \right. 
$$
%\end{enumerate}
%\end{lem}

Note that it follows from the (geometric) mesh relations, that the effect 
of $g$ downwards moves 
combined with $g$ upwords moves in $\T_g$ ($h$ upwards moves with 
$h$ downwards moves in $\T_h$) 
is independent of the order in which this moves are done. 

Relation (f) involves the four ``top'' vertices in $Q_m$ and arrows 
corresponding to long 
moves between them. We will first describe these vertices 
giving one lift in $\U$ for each of them. 
First recall that $(ghm,0)_P=\tau^{-ghm}(0,0)_P $, 
$(ghm,0)_I= \tau^{ghm}(0,0)_I $ and observe that 
$(0,g)_g$ has as lift $[0_{\partial},2_{\partial}]$, 
$(0,0)_h$ has as lift $[-2_{\partial'},0_{\partial'}]$ 
(cf. Figure~\ref{fig:quiver-again} for $m=2$ to determine the latter). 

$$
\begin{array}{lcl} 
\mbox{vertices of $Q_m$}  & \quad & \mbox{lifts in $\U$} \\ %, equivalences in $P_{g,h}$} \\
\hline
(ghm,0)_P % =  \tau^{-ghm}(0,0)_P 
     &  & [-ghm_{\partial},(h+ghm)_{\partial'}] \\
(2hm(n+1)+h,0)_h  && [-2_{\partial'},(2hm(n+1)+h)_{\partial'}] \\
%  && \hat{=} [(-2-h(m(n+1)+hm))_{\partial'},(h+ghm)_{\partial'}]\\
%  && \hat{=} [(-2-h(m(n+1)+hm+1))_{\partial'},ghm_{\partial'}]\\
(2gm(n+1)+g,g)_g && [0_{\partial}, (2gm(n+1) +g+2)_{\partial}] \\  
%  && \hat{=} [-ghm_{\partial},(2gm(n+1) -ghm+g+2)_{\partial}]\\ 
%  && \hat{=}  [-ghm_{\partial},(2+g(m(n+1) +gm+1))_{\partial}] \\ 
(ghm,0)_I %= \tau^{ghm}(0,0)_I  
     & & [(-2-ghm)_{\partial'},(2+ghm)_{\partial}] %\\ 
% && \hat{=}[(-h(m(n+1)+hm+1-j)-2)_{\partial'},(2-ghm-g+jg)_{\partial}] \\ 
% & & \hat{=} [(-2-ghm-hj)_{\partial'},(2+gmh-gj)_{\partial}]
\end{array}
$$

In terms of lifts in $\U$, the four arrows $\iota_*(0)$, $\kappa_*(0)$ 
of $Q_m$ 
can be described as long moves moving vertically between the two 
boundary components 
composed with a sequence of elementary moves. 

%%%%%%%%%%%%%%%%%%%%%%%%%%%%%%%%%%%%%%%
\subsubsection{$\iota_{\infty}(0)$: $(ghm,0)_P\to (2hm(n+1)+h,0)_h$}\label{sssec:iota-h}
%%%%%%%%%%%%%%%%%%%%%%%%%%%%%%%%%%%%%%%
This arrow is a long move from an arc $\partial\to \partial'$ to 
a peripheral arc at $\partial'$. In geometric terms it is a rotation around the common 
ending point on $\partial'$ of the involved arcs. In terms of lifts in $\U$ with 
a common ending point: 
$$
[-ghm_{\partial},(h+ghm)_{\partial'}] \stackrel{(\nu_r)_r\mu}{\longrightarrow}
[(-2-h(m(n+1)+hm))_{\partial'},(h+ghm)_{\partial'}]
$$
The effect of $\iota_{\infty}(0)$ is to first use a long move $\mu$, 
changing the starting 
point from $ghm_{\partial}$ by sending it 
vertically across to $hhm_{\partial'}$ 
and then to use $2+hm(n+1)$ elementary moves $\nu_r$ 
still fixing the ending point on $\partial'$ 
to send this starting 
point along $\partial'$ to the left by subtracting $2+hm(n+1)$. 

%%%%%%%%%%%%%%%%%%%%%%%%%%%%%%%%%%%%%%%
\subsubsection{$\iota_0(0)$: $(ghm,0)_P\to (2gm(n+1)+g,g)_g$}\label{ssec:iota-g}
%%%%%%%%%%%%%%%%%%%%%%%%%%%%%%%%%%%%%%%
The arrow $\iota_0(0)$ is a long move around a common starting point on $\partial$. 
In terms of lifts: 
$$
[-ghm_{\partial},(h+ghm)_{\partial'}]\stackrel{(\nu_r)_r\mu}{\longrightarrow}
[-ghm_{\partial},(2+g(m(n+1) +gm+1))_{\partial}]
$$
The effect of $\iota_0(0)$ is to first send the ending point 
of the arc from $(h+ghm)_{\partial'}$ to $(g(1+gm))_{\partial}$ (long move $\mu$) 
and then to send 
this point along $\partial$ to the right by adding $2+gm(n+1)$ (composition of 
$2+gm(n+1)$ elementary moves $\nu_r$ around the same starting point). 

%%%%%%%%%%%%%%%%%%%%%%%%%%%%%%%%%%%%%%%
\subsubsection{$\kappa_{\infty}(0)$: $(2hm(n+1)+h,0)_h\to (ghm,0)_I$}\label{sssec:kappa-h}
%%%%%%%%%%%%%%%%%%%%%%%%%%%%%%%%%%%%%%%
This arrow corresponds to a rotation around the common starting point on 
$\partial'$. 
In $\U$, 
$$
[(-2-h(m(n+1)+hm))_{\partial'},(h+ghm)_{\partial'}]\stackrel{(\nu_r)_r\mu}{\longrightarrow}
[(-2-h(m(n+1)+hm)_{\partial'}, 2-ghm_{\partial}]
$$
we can describe the effect of $\kappa_{\infty}(0)$. 
First, the endpoint of the arc on $\partial'$ is sent vertically across to 
$(g+g^2m)_{\partial}$ under the long move $\mu$ and then under $2-g-g(m(n+1))$ 
elementary moves $\mu_r$, the endpoint is 
sent to the right to obtain $[(-2-h(m(n+1)+hm)_{\partial'}, 2-ghm_{\partial}]$. 

%%%%%%%%%%%%%%%%%%%%%%%%%%%%%%%%%%%%%%%
\subsubsection{$\kappa_0(0)$: $(2gm(n+1)+g,g)_g\to (ghm,0)_I$}\label{sssec:kappa-g}
%%%%%%%%%%%%%%%%%%%%%%%%%%%%%%%%%%%%%%%
$\kappa_0(0)$ corresponds to a rotation around the common ending point on 
$\partial$. 
In $\U$
$$
[-ghm_{\partial},(2+g(m(n+1) +gm+1))_{\partial}]\stackrel{(\nu_r)_r\mu}{\longrightarrow}
[(-2+ghm+h)_{\partial'},(2+g(m(n+1)+gm+1))_{\partial}]
$$
We first have a long move $\mu$ fixing the endpoints, sending the starting point 
from $-ghm$ on ${\partial}$ across to $-h^2m$ on $\partial'$. 
This is then composed with  $-2+h+hm(n+1)$ 
elementary moves $\mu_r$ to send the 
new starting point to the right and get the desired result.

%%%%%%%%%%%%%%%%%%%%%%%%%%%%%%%%%%%%%%%
\subsubsection{$\rho_{\infty}^h\pi_{\infty}^h$ and $f^{\uparrow \downarrow h}$} 
%%%%%%%%%%%%%%%%%%%%%%%%%%%%%%%%%%%%%%%

In terms of arcs, the effect of the path $\rho_{\infty}^h\pi_{\infty}^h$ is 
$f^{\uparrow \downarrow h}$ and the 
effect of $\pi_0^g\rho_0^g$ is $f^{\downarrow\uparrow g}$.

%%%%%%%%%%%%%%%%%%%%%%%%%%%%%%%%%%%%%%%
\subsection*{Path $\kappa_{\infty}(0)(\rho_{\infty}^h\pi_{\infty}^h)^j\iota_{\infty}(0)$}
%%%%%%%%%%%%%%%%%%%%%%%%%%%%%%%%%%%%%%%
We use the lifts from above (Subsection~\ref{sssec:iota-h}) describing $\iota_{\infty}(0)$ 
and compose with the remaining paths: 

\begin{eqnarray*}
(\rho_{\infty}^h\pi_{\infty}^h)^j & : &  
[(-2-h(m(n+1)+hm))_{\partial'},(h+hgm)_{\partial'}]  \\ 
 & & \longrightarrow [(-2-h(m(n+1)+hm-j))_{\partial'}, (hgm+hj+h)_{\partial'}]
\end{eqnarray*} 

Applying $\kappa_{\infty}(0)$ (Subsection~\ref{sssec:kappa-h}), this has the image 
\begin{equation}\label{eq:path-1-in-f}
[(-2-h(m(n+1)+hm-j))_{\partial'},2-ghm+jg)_{\partial}]
\end{equation}

%%%%%%%%%%%%%%%%%%%%%%%%%%%%%%%%%%%%%%%
\subsection*{Path $\kappa_0(0)(\pi_0^g\rho_0^g )^{2m(n+1)+1-j}\iota_0(0)$}
%%%%%%%%%%%%%%%%%%%%%%%%%%%%%%%%%%%%%%%

The arrow $\iota_0(0)$ is described in Subsection~\ref{ssec:iota-g}, we compose with 
the remaining paths: 

\begin{eqnarray*}
(\rho_0^g\pi_0^g)^{2m(n+1)+1-j} & : &  
[-ghm_{\partial},(g(m(n+1)+gm+1)+2)_{\partial}]  \\ 
 & & \longrightarrow [(-g(2m(n+1)+hm+1-j))_{\partial}, (g(mh-j)+2)_{\partial}]
\end{eqnarray*} 

Finally, applying $\kappa_0(0)$ (Subsection~\ref{sssec:kappa-g}), we get 
\begin{equation}\label{eq:path-2-in-f}
[(\underbrace{-2-ghm-hj}_{:=x})_{\partial'},
  (\underbrace{2+ghm-gj}_{:=y})_{\partial}]
\end{equation}

With the projection to $P_{g,h}$ in mind, we translate the resulting arc in $\U$ 
so that the images (\ref{eq:path-2-in-f}) and (\ref{eq:path-1-in-f}) of the two paths 
have the same endpoint. To the endpoint $y_{\partial}$ of (\ref{eq:path-2-in-f})
we add $2g(j-hm)$ (on boundary $\partial$). 
On $\partial'$, the corresponding translation 
is by $+2h(j-hm)$: 
\begin{eqnarray*}
%\pi[(-2-ghm-hj)_{\partial'},(2+ghm-gj)_{\partial}] 
\pi[x_{\partial'},y_{\partial}] & = &  [(-2-ghm-hj+2h(j-hm))_{\partial'},(2-ghm+gj)_{\partial}]\\ 
  & = & [(-2+hj -h(2hm+gm))_{\partial'},(2-ghm+gj)_{\partial}] \\ 
  & = &  [(-2+hj -h(hm+m(n+1)))_{\partial'},(2-ghm+gj)_{\partial}] 
\end{eqnarray*}

Hence the images of these arcs in $P_{g,h}$ under the projection map are the same 
and relation (f) is satisfied. 

%%%%%%%%%%%%%%% NEW SUBSECTION %%%%%%%%%%%%%%%%%%%%%
%
\subsection*{Relation (e)} 
%%%%%%%%%%%%%%%%%%%%%%%%%%%%%%%%%%%%

We only show the first claim $\kappa_0(0)\iota_0(0) (ghm,\alpha_n)_P=0$, 
the second claim is completely analogous. 

By (f), with $j=2m(n+1)+1$, we know that 
$\kappa_0(0)\iota_0(0) =\kappa_{\infty}{\rho_{\infty}^h\pi_{\infty}^h}^h$. In terms of arcs in 
the annulus, $\rho_{\infty}^h\pi_{\infty}^h$ is the map $f^{\uparrow\downarrow h}$ from above, it sends 
any arc $\pi[i_{\partial'},j_{\partial'}]$ with $j-i\ge h+2$ to itself. Relation 
(e) concerns a path going through $(2m(n+1)h,0)_h$, the vertex corresponding to 
$\alpha:=\pi[-2_{\partial'},(2hm(n+1)-h)_{\partial'}]$ 
and this is high up in the tube $\GG^{\infty}$, 
hence the effect of $f^{\uparrow\downarrow h}$ is to send this arc to itself. Applying $f^{\uparrow\downarrow h}$ to 
the arc $2m(n+1)-1$ times still sends $\alpha$ to itself, however, the composition of 
all these elementary moves touches the mouth of the tube exactly once. In (e), this is 
precomposed with the elementary move corresponding to $(ghm,\alpha_n)_P$. 
But this means 
that we can use Case B) from Section~\ref{ssec:diamonds-long} to replace the elementary 
move in $\GG^P$ (and the long move from $\GG^P$) by a long move to the arc 
$\pi[-2_{\partial'},  (2hm(n+1)+h-1)_{\partial'}]$ followed 
by an elementary move to $\alpha$ within the tube. Hence the path from $\alpha$ 
to itself, going all the way down to the mouth, is precomposed with one downwards move 
(in $\GG^{\infty}$). Then we use the mesh relations within $\GG^{\infty}$ to push this 
all the way to the mouth, hitting the tube a second time just to the left of the other 
vertex at the mouth. That means that in our path, there are two shortest peripheral arcs. 
Between them, there is only a boundary segment (hence a zero object) and 
an arc of the form $[i_{\partial'},(i+3)_{\partial'}]$. 
The zero relation at the mouth proves the claim (Subsection \ref{ssec:zero-triangle}).

\end{document}